\newcounter{algsubstate}
\def\bt#1{{\color{black}#1}}
\def\btt#1{{\color{black}#1}}
\def\BState{\State\hskip-\ALG@thistlm}
\begin{document}
\bstctlcite{IEEEexample:BSTcontrol}

\addtolength{\abovedisplayskip}{-.1cm}
\addtolength{\belowdisplayskip}{-.1cm}

\addtolength{\textfloatsep}{-.5cm}

%
\title{Robust Distributed Optimization \\ With Randomly Corrupted Gradients}

\author{\IEEEauthorblockN{Berkay Turan \quad}
\IEEEauthorblockN{C\'esar A. Uribe \quad}
\IEEEauthorblockN{Hoi-To Wai \quad}
\and
\IEEEauthorblockN{Mahnoosh Alizadeh}%
\thanks{B. Turan and M. Alizadeh are with  Dept. of ECE, UCSB, Santa Barbara, CA, USA. C. A. Uribe is with Dept. of ECE, Rice University, TX, USA. H. T. Wai is with Dept. of SEEM, CUHK, Shatin, Hong Kong. This work is supported by  NSF grant \#1847096. H.T. Wai was partially supported by CUHK Direct Grant \#4055113 and C.A. Uribe was partially supported by ARPA-H Strategic Initiative Seed Fund \#916012, and Sustainable Futures Fund \#919027. E-mails: {\url{bturan@ucsb.edu}, \url{cauribe@rice.edu}, \url{htwai@se.cuhk.edu.hk}, \url{alizadeh@ucsb.edu}}}
}


%


\maketitle


\begin{abstract}
In this paper, we propose a first-order distributed optimization algorithm that is provably robust to Byzantine failures—arbitrary and potentially adversarial behavior, where all the participating agents are prone to failure. We model each agent's state over time as a two-state Markov chain that indicates Byzantine or trustworthy behaviors at different time instants. We set no restrictions on the maximum number of Byzantine agents at any given time. We design our method based on three layers of defense: 1) \bt{temporal robust aggregation}, 2) \bt{spatial robust aggregation}, and 3) gradient normalization. 
We study two settings for stochastic optimization, namely Sample Average Approximation and Stochastic Approximation. \bt{We provide convergence guarantees of our method for strongly convex and smooth non-convex cost functions.}
\end{abstract}

%
\IEEEpeerreviewmaketitle
\newtheorem{proposition}{Proposition}
\newtheorem{definition}{Definition}
\newtheorem{corollary}{Corollary}
\newtheorem{theorem}{Theorem}
\newtheorem{lemma}{Lemma}
\newtheorem{Fact}{Fact}
\newtheorem{remark}{Remark}
\newtheorem{assumption}{Assumption}
\newtheorem*{runningexample}{Running Example}
\makeatletter
\def\blfootnote{\xdef\@thefnmark{}\@footnotetext}
\makeatother
\newcommand{\prm}{\boldsymbol{\theta}}
\newcommand{\prmdl}{\boldsymbol{z}}
\newcommand{\eqdef}{\vcentcolon=}
\newcommand{\beq}{\begin{equation}}
\newcommand{\eeq}{\end{equation}}
\newcommand{\grd}{\nabla}
\newcommand{\Cset}{\mathcal C}
\newcommand{\ie}{i.e., }
\renewcommand{\thefootnote}{\fnsymbol{footnote}}
 \renewcommand{\thefootnote}{\arabic{footnote}}
\section{Introduction}
Convenience for large-scale data processing, privacy preservation, and parallel algorithm execution rendered the design of distributed optimization algorithms an attractive field for scholars \cite{yu2017distributed,yang2019survey,mertikopoulos2017distributed,tekin2015distributed,chouvardas2015greedy,marano2013nearest,swenson2015empirical}. However, the distributed nature of such methods, for example, physically separated servers connected over a network, exposes the system to vulnerabilities not faced by their traditional centralized counterparts \cite{chen2018internet}. The robustness and security of distributed methods need to be taken into account when assessing algorithm performance \cite{yang2019survey}.

In a centralized system, data can be cleaned, faultless computation can be established by reliable hardware, and communication requirements are minimal. On the other hand, typical distributed algorithms assume trustworthy data, faultless computation, and reliable communication. Also, privacy constraints might not allow for data corruption checks, while distributed computing infrastructure increases the likelihood of faulty hardware, e.g., personal devices~\cite{konevcny2016federated}.
Lastly, unreliable communication might occur due to noisy wireless communication, or more importantly, due to man-in-the-middle adversarial attacks.
In man-in-the-middle attacks, an adversary can take over network sub-systems and arbitrarily alter the information stored in and communicated between the machines to prevent convergence to the optimal solution, i.e., Byzantine attacks~\cite{vempaty2013distributed}.




Robust distributed optimization under adversarial manipulation has been studied for various corruption models, see~\cite{kairouz2019advances,yang2020adversary} for comprehensive reviews. For example, gradients communicated over a network are usually modeled as corrupted by: non-malicious noise~\cite{mnih2012learning},
adversarial noise~\cite{tramer2019adversarial},
quantization~\cite{alistarh2016qsgd},
or because the gradients are inexact oracles~\cite{dvurechensky2017gradient}.
Although robust optimization methods with strong theoretical guarantees are well established~\cite{natarajan2013learning,tramer2019adversarial}, a drawback of these approaches is that the corrupt gradients are assumed to be within a bounded neighborhood of the trustworthy ones, i.e., corruption can be modeled as a bounded additive noise to the trustworthy gradients.


On the other hand, an adversarial corruption model, which can be unbounded and arbitrary, has been extensively studied in the distributed learning literature under categories of data poisoning \cite{steinhardt2017certified}
and model update poisoning attacks \cite{wu2020federated,bagdasaryan2020backdoor}.
This line of work models corruption as an arbitrary manipulation on the information sent by the agents  or on the data samples stored at the agents. However, the adversary is often assumed to have limited capability, i.e., the adversary is only able to manipulate a certain fraction of agents or data samples. Although successful defense mechanisms based on robust aggregation methods \cite{chen2017distributed,chen2018draco,cao2020distributed,blanchard2017machine,yin2019defending,yin2018byzantine,yang2019byrdie}
and data sanitation using robust statistics \cite{steinhardt2017certified}
are shown to be robust to these types of manipulation, robust estimation techniques rely on a bounded $\alpha$ fraction of agents/data points being corrupt at all times. Therefore, they are not applicable if there exist iterations with more than $\alpha$ fraction of corrupted agents. For instance, if at any iteration more than half of the agents behave unpredictably and send arbitrarily corrupt information, then the aggregate will be arbitrarily corrupted. In fact, it was recently shown that even more benign-looking manipulations are able to get through these defense mechanisms with corruption rates as low as $.5-1\%$~\cite{wang2020attack}.

In this paper, we study another corruption model where existing defense mechanisms are prone to failure. In particular, we adopt a distributed optimization framework where a group of agents communicates local gradient information to a central machine that aggregates and distributes information back to the agents. By modeling the temporal dynamics of the agents' states (either trustworthy or corrupted/Byzantine) via a two-state Markov chain, we allow \emph{all the agents} to be susceptible to  \emph{arbitrary corruption}.  
\bt{
This type of corruption would occur in practical applications of distributed optimization due to various reasons including but not limited to:
\begin{enumerate}[wide]
		    \item Behavioral (intentional or unintentional) changes of the agents: Due to its privacy-preserving nature, the models established for practical applications such as text completion are trained using user text data without observing it. Besides unintentional mistakes that can be made by a user at random times, users can intentionally behave differently at different time periods. For instance, a multilingual person who works in the United States could be typing in English during work hours and in \btt{another language} after work hours. These periods can also be longer or shorter in duration. If the goal is to train a text completion model for English, then we consider the user as Byzantine when they type in \btt{another language} and trustworthy when they type in English. In this setting, a Markovian Byzantine agent model would be a suitable corruption model.
		    \item Cyber attacks in cyber-physical systems: Among the many types of cyber attacks, Byzantine attacks and man-in-the-middle attacks are important to defend against for distributed optimization algorithms. The attacker is free to hack any user at any time, however, the hack is not necessarily successful all the time, for instance, due to the existence of a firewall. In this case, it would be suitable to model the dynamics of a trustworthy agent's state as a Markov chain with a certain probability of turning into Byzantine, which would capture the aforementioned random characteristics of a Byzantine attack. On the other hand, literature on Byzantine fault detection and man-in-the-middle attack detection establishes that with repeated interactions with the agents, these types of attacks can be detected \cite{ding2018survey,bhushan2017man}. However, there are no certain guarantees on how long a successful detection would take as it would depend on how the attacker behaves. To resemble this randomness in the detection time and success, it would be suitable to model the dynamics of a  Byzantine agent's state as a Markov chain with a certain probability of turning into trustworthy. Given these features of cyber attacks and defense on cyber-physical systems, it would be suitable to \btt{approximate the agent behavior by} a Markovian model as opposed to a static model for distributed optimization applications.
		    \end{enumerate}
}

\bt{A consequence of the Markovian setting is that there could exist iterations at which the majority of the agents send corrupt gradients to the central machine, in which case existing defense mechanisms would fail.} For this setting, we develop a robust distributed optimization algorithm with provable convergence guarantees for a number of function classes.

\noindent\textbf{Contributions:} 
Our main contribution is a distributed stochastic optimization algorithm, named Robust \bt{Aggregating} Normalized Gradient Method (RANGE), that achieves \bt{strong convergence guarantees} while being robust to a newly proposed Markovian gradient corruption model.
\begin{itemize}[leftmargin=5mm, noitemsep, topsep = .5mm]
\item We propose a novel Markovian Byzantine agent model that models dynamically changing sets of Byzantine agents with no assumptions on the maximum fraction of Byzantine agents at a particular iteration.
\item We study two settings for stochastic optimization for RANGE, namely Sample Average Approximation (SAA) and Stochastic Approximation (SA). We prove that for both SAA and SA, when the parameters are tuned appropriately according to the spectral gap of the Markov chain, RANGE converges to a neighborhood of the optimal solution at a linear rate for strongly convex cost functions.
\item We prove that for smooth (possibly non-convex) cost functions, RANGE converges to a neighborhood of a stationary point at a rate of ${\cal O}(1/\sqrt{T})$, where $T$ is the number of iterations.
\item For the SAA setting, we show that RANGE achieves lower error rates   in the Markovian Byzantine agent setup with an expected $\alpha$ fraction of Byzantine agents than state-of-the-art algorithms in the setup with a bounded $\alpha$ fraction of Byzantine agents for all iterations.
     \item We show that RANGE achieves lower statistical error rates in the SA setting than the SAA setting for sufficiently low corruption rates, i.e., the expected fraction of Byzantine agents. We provide an explicit characterization of such bound.
    \item We provide numerical evidence demonstrating the efficacy and robustness of RANGE in the proposed setting.
\end{itemize}

RANGE is designed with three ingredients: (1) \bt{temporal robust aggregation}, (2) \bt{spatial robust aggregation}, and (3) gradient normalization. The \bt{temporal robust aggregation} step estimates the robust mean of each agent's historical gradient data over a finite window to compute a robustified gradient for each agent. Informally, the received gradients over a period of time contain a fraction of trustworthy information that can be extracted by the algorithm to perform faithful computations rather than applying potentially corrupt gradients directly. In case the robustified gradient produced by \bt{temporal robust aggregation} becomes contaminated by corruption, another layer of defense mechanism is implemented via \bt{spatial robust aggregation} of all the agents' robustified gradients. Lastly,  normalization preserves only the directional information and thus prevents large updates  that corrupt gradients might cause in case the \bt{temporal robust aggregation} and \bt{spatial robust aggregation} steps do not sufficiently eliminate corruptions. 

\noindent\textbf{Related work:} Our work has connections to the literature on (i)  normalized gradient method, (ii) gradient clipping, and (iii) delayed gradient descent.
\begin{itemize}[wide, labelindent=0pt,topsep=.5mm]
    \item \emph{Normalized Gradient Method: } Normalized gradient method is a well-studied algorithm for optimization and is supported by theoretical convergence guarantees for convex~\cite{nesterov2004introductory}
    and quasi-convex optimization~\cite{hazan2015beyond}. Using normalized updates is gaining popularity, especially for non-convex optimization~\cite{you2017large},
    since for non-convex objectives, unlike the convex ones, the magnitude of the gradient provides less information about the value of the function, while the direction still indicates the direction of steepest descent. 
    An important benefit of this is the fast evasion of saddle points~\cite{levy2016power}.
    Seeing the need for large batch sizes for variance reduction of stochastic gradients as a drawback of normalized updates, a recent work \cite{cutkosky2020momentum} proves that adding momentum removes the need for large batch sizes on non-convex objectives while matching the best-known convergence rates. In a preliminary conference report~\cite{turan2020robustness}, we investigated the robustness properties of the normalized subgradient method for solving deterministic optimization problems in a centralized fashion. In the current work, we expand \cite{turan2020robustness} into a distributed setup with a stochastic objective function, additionally study non-convex objectives both theoretically and numerically, and employ two additional layers of defense by means of robust mean estimation before applying normalization to improve our algorithm.
    \item \emph{Gradient Clipping: }
    As a similar method to normalization, gradient clipping is a common technique in optimization used for privacy \cite{abadi2016deep}.
    Recent studies demonstrate that gradient clipping can be applied for robustness to model update poisoning attacks \cite{sun2019can} and label noise~\cite{menon2020can}. 
    However, similar to robust distributed optimization literature, due to the limitations on the amount of corruption and adversarial agents, their methods are inapplicable in our setting and can be outperformed, as we will show numerically in Section~\ref{sec:numerical}.
    \item \emph{Delayed Gradient Descent: }\bt{Temporal robust aggregation} step of our method is in principle similar to a delayed gradient descent method \cite{nedic2001distributed}, since \bt{temporal aggregation} is a linear combination of the past gradients. Motivated by applications to distributed optimization over networks, researchers have established convergence guarantees for deterministic \cite{gurbuzbalaban2017convergence}
    and stochastic delayed gradient methods \cite{agarwal2012distributed}.
    Given strong theoretical results, we integrate the delayed gradient method to our algorithm via \bt{temporal robust aggregation} and show that it improves robustness.
\end{itemize}
\noindent
\textbf{Paper Organization:}
The remainder of the paper is organized as follows. In Section~\ref{sec:problem}, we define the problem setting. In Section~\ref{sec:range}, we describe our algorithm called RANGE and discuss how it can solve the proposed problem. In Sections~\ref{sec:SAA} and \ref{sec:SA}, we present the convergence properties of RANGE for the SAA and SA settings, respectively. In Section~\ref{sec:special}, we discuss two special cases of RANGE, one without \bt{temporal robust aggregation} and one with independent random corruption. In Section~\ref{sec:numerical}, we provide numerical results for RANGE.

\noindent \textbf{Notations and conventions: } Unless otherwise specified, $\| \cdot \|$ denotes the standard Euclidean norm. For any $N \in \mathbb N$, $[N]$ denotes the finite set $\{1,...,N\}$. Given a vector $v$, if $\|v\|=0$, then $v/\|v\|=0$. The ${\cal O}(\cdot)$ notation hides constants, logarithmic terms, and only includes the dominant terms. Given a function $f(x,z)$, $\partial_k f(x,z)$ denotes the partial derivative of $f(x,z)$ with respect to $k$'th coordinate of $x$.
\section{Problem Setup}\label{sec:problem}
In this section, we formally set up our problem and introduce key concepts and definitions that will be used in the paper. We are interested in the stochastic optimization problem
\begin{equation}\label{eq:mainproblem}
   x^\star=\underset{x\in{\cal X}}{\arg\min}~ F(x)=\underset{x\in{\cal X}}{\arg\min}~\underset{z\sim {\cal D}}{\mathbb{E}}[f(x,z)],
\end{equation}
where $f(x,z)$ is a cost function of a parameter vector $x\in {\cal X}\subseteq \mathbb{R}^d$ associated with a data point $z\in {\cal Z}$ and the data points are sampled from some unknown distribution ${\cal D}$. To solve \eqref{eq:mainproblem}, we study two settings for stochastic optimization, namely Sample Average Approximation (SAA) \cite{kleywegt2002sample} and Stochastic Approximation (SA) \cite{wasan2004stochastic}, in a distributed setup with one central machine and $N$ agents that compute stochastic gradients at a point $x$
via $\nabla f(x, z)$ based on independent samples $z\sim {\cal D}$. 

In iterative distributed first-order methods, given the parameter vector $x_t$ at iteration $t$, the central machine receives the feedback $\nabla F_{i,t}(x_t)$ from all the agents, aggregates by computing the average, and applies a descent step to get the updated parameter $x_{t+1}$.  Here,
\begin{equation}
    F_{i,t}(x_t)=\frac{1}{b}\sum_{j=1}^b f(x_t,z_{i,t}^{j})
\end{equation}
is the empirical risk function and $\{z_{i,t}^{j}\}_{j\in[b]}$ are the $b$ data points used for gradient computation at agent $i$ and iteration $t$. In SAA, each agent uses a fixed set of data samples to estimate the gradient at all iterations, i.e., $\{z_{i,\tau}^j\}_{j\in[b]}=\{z_{i,\tau'}^{j}\}_{j\in[b]}$ and $F_{i,\tau}(x)=F_{i,\tau'}(x)$ $\forall i\in[N],x\in{\cal X},\tau,\tau'\in{\mathbb N}_0$ \cite{kim2015guide}. In SA, the agents sample $b$ new data points from ${\cal D}$ at each iteration and therefore $F_{i,\tau}(x)$ and $F_{i,\tau'}(x)$ are independent random variables $\forall i\in [N]$, $x\in{\cal X}$, $\tau,\tau'\in{\mathbb N}_0$ such that $\tau\neq \tau'$ \cite{kim2015guide}.

Such methods, however, rely on the feedback received from each agent being trustworthy gradient information and might fail to converge when the feedback becomes corrupt, as one single corrupt feedback can have an arbitrarily large effect. Denote the set of agents communicating corrupt gradient information, i.e., Byzantine agents, at iteration $t$ by ${\cal B}^t$, and the set of agents communicating trustworthy gradient information, i.e., trustworthy agents, at iteration $t$ by ${\cal T}^t$. At each iteration $t$, the feedback is determined as:
 \begin{equation}\label{eq:minibatchgradients}
    g_{i,t}=\left\{\begin{array}{cl}
     \nabla F_{i,t}(x_t) & \text{if }i\in {\cal T}^t, \\
        \star & \text{if } i\in{\cal B}^t,
    \end{array}\right.
\end{equation}
where the corrupt feedback $\star$ is arbitrary and is possibly chosen by an adversary, who may have full knowledge of the problem. We note that this model encompasses a large class of scenarios where the feedback can become corrupt (e.g., errors in communication or computation, corrupt data, adversarial manipulation) since we set no restrictions on $\star$.

Contrary to existing literature, we study dynamically changing sets of Byzantine agents ${\cal B}^t$ and trustworthy agents ${\cal T}^t$, where the transition of each agent from Byzantine/trustworthy state to trustworthy/Byzantine state happens probabilistically at each iteration. In particular, we define
\begin{align}
    &p_b=\mathbb{P}(i\in{\cal B}^{t+1}|i\in{\cal T}^t),\quad\forall i\in[N],\forall t,\label{eq:pb}\\
    &p_t=\mathbb{P}(i\in{\cal T}^{t+1}|i\in{\cal B}^t),\quad\forall i\in[N],\forall t,\label{eq:pt}
\end{align}
where \btt{$0<p_b<p_t<1/2$}. Accordingly, each agent's state transition over time is governed by a two-state Markov chain with transition matrix
\begin{equation}\label{eq:transitionmatrix}
   M=\begin{bmatrix}
1-p_b & p_b\\
p_t & 1-p_t
\end{bmatrix},
\end{equation}
and stationary distribution
\begin{equation}
    \pi^\star=\left[\frac{p_t}{p_t+p_b}~\frac{p_b}{p_t+p_b}\right],
\end{equation}
where state $0$ corresponds to the trustworthy state and state $1$ corresponds to the Byzantine state. We note that the exact knowledge of the transition probabilities is not necessary. We can take $p_b$ as an upper bound on the trustworthy to Byzantine transition probability, and $p_t$ as a lower bound on the Byzantine to trustworthy transition probability.

In the next section, we explain the first-order method we propose to obtain a near-optimal solution to~\eqref{eq:mainproblem} in setting defined by \eqref{eq:minibatchgradients}-\eqref{eq:pt}. 
For completeness, we end this section with a couple of standard definitions from convex analysis regarding a differentiable function $f:{\mathbb R^d\rightarrow{\mathbb R}}$.

\begin{definition}\label{def:smooth}
A differentiable function $f$ is said to be \mbox{\textbf{$\boldsymbol{L}$-smooth}} if there exists $L>0$ such that
\begin{equation}
    \|\nabla f(x_1)-\nabla f(x_2)\|\leq L \|x_1-x_2\|,
\end{equation}
for all $x_1,x_2\in \cal X$.
\end{definition}
\begin{definition}\label{def:strong}
A differentiable function $f$ is said to be \mbox{\textbf{$\boldsymbol{\mu}$-strongly convex}} if there exists $\mu>0$ such that
\begin{equation}
    \langle \nabla f(x_1)-\nabla f(x_2),x_1-x_2 \rangle\geq \mu\|x_1-x_2\|^2,
\end{equation}
for all $x_1,x_2\in \cal X$.
\end{definition}

\section{Robust \bt{Aggregating} Normalized Gradient Method (RANGE)}\label{sec:range}

\begin{algorithm}[tb]
    \caption{Robust \bt{Aggregating} Normalized Gradient Method (RANGE)}
    \begin{algorithmic}[1]
    \STATE {\bfseries Input:} Initialize $x_1\in {\cal X}$, step size $\gamma$, window size $m$, $m_0\in{\mathbb N}_0$, $T$, and $\alpha_1,\alpha_2<0.5$ s.t. $\alpha_1 m,\alpha_2 N \in \mathbb{N}_0$.
    \FOR{$t=1$ {\bfseries to} $T+m-1+m_0$}
    \STATE Broadcast $x_t$ to the agents.
    \STATE Receive $g_{i,t}$, defined in~\eqref{eq:minibatchgradients}, for $i\in[N]$.
 \IF{$t\leq m-1$}
    \STATE Set $\hat{g}_{i,t}=g_{i,t}$.
  \ELSE 
    \STATE Compute the robust mean $\hat{g}_{i,t}$ of $\{g_{i,t-\tau}\}_{\tau=0}^{m-1}$ using~\eqref{eq:median} with parameters $\alpha_1$ and $m$, for $i\in[N]$.\label{avgstep}
  \ENDIF
  \STATE Compute the robust mean $\hat{\hat{g}}_t$ of $\{\hat{g}_{i,t}\}_{i\in [N]}$ using~\eqref{eq:median} with parameters $\alpha_2$ and $N$.\label{aggregationstep}
    \STATE Compute $x_{t+1}=\Pi_{\cal X}\left(x_t-\gamma \hat{\hat{g}}_t/\|\hat{\hat{g}}_t\|\right)$.\label{updatestep}
    \ENDFOR
    \end{algorithmic}
    \label{alg:distriutedavgnormalizedgd}
\end{algorithm}

To solve Problem~\eqref{eq:mainproblem} in the Byzantine setting defined by \eqref{eq:minibatchgradients}-\eqref{eq:pt}, we propose an algorithm called Robust \bt{Aggregating} Normalized Gradient MEthod (RANGE), which is summarized in Algorithm~\ref{alg:distriutedavgnormalizedgd}. There are three main interacting ideas behind Algorithm~\ref{alg:distriutedavgnormalizedgd} to guarantee convergence and robustness:  1) \bt{temporal robust aggregation}, 2) \bt{spatial robust aggregation}, and 3) gradient normalization. \bt{Temporal robust aggregation} in Step~\ref{avgstep} of Algorithm~\ref{alg:distriutedavgnormalizedgd} aims to compute a robustified gradient for all agents by estimating 
a robust mean of a window of past gradients. The intuition behind this is that despite corruptions, the feedback received over a long period from every single agent contains a fraction of trustworthy information that the algorithm can extract to perform accurate computations. To defend against the scenarios where the robustified gradient produced by the \bt{temporal robust aggregation} step is corrupted for some of the agents (e.g., if the window only contains corrupted gradients), in Step~\ref{aggregationstep} of Algorithm~\ref{alg:distriutedavgnormalizedgd} we implement a second layer of robust mean estimation when aggregating all the agents' robustified gradients in order to eliminate those corrupted gradients. Lastly, by gradient normalization in Step~\ref{updatestep} of Algortihm~\ref{alg:distriutedavgnormalizedgd}, we restrict the aggregate gradient to only contain directional information. This prevents arbitrarily large updates in case the \bt{temporal robust aggregation} and \bt{spatial robust aggregation} steps do not sufficiently eliminate the corruptions.

Let us provide a summary of Algorithm~\ref{alg:distriutedavgnormalizedgd}.  At each iteration $t$, the central node receives the feedback $g_{i,t}$ according to~\eqref{eq:minibatchgradients} from all the agents. If $t\geq m$, it estimates a robustified gradient $\hat{g}_{i,t}$ for each agent $i\in[N]$ by performing a \bt{temporal robust aggregation} over a window of gradients $\{g_{i,t-\tau}\}_{\tau=0}^{m-1}$ using the median-based mean estimator that will be described later. If $t<m$, it simply sets $\hat{g}_{i,t}=g_{i,t}$. Then, it aggregates the robustified gradients $\{\hat{g}_{i,t}\}_{i\in[N]}$ using the same median-based mean estimator to get the robust aggregate $\hat{\hat{g}}_t$ and moves the iterate along $\hat{\hat{g}}_t/\|\hat{\hat{g}}_t\|$ with step size $\gamma$. Finally the algorithm projects the point back to the decision set $\cal X$. 

To get a good grasp of why RANGE works, let us discuss how the mechanics of each step assist the convergence of the algorithm, starting with the robust mean estimator.

\noindent\textbf{Robust mean estimator:}
  Suppose that we have a set of $k$ vectors $\{v_i\in \mathbb{R}^d\}_{i=0}^{k-1}$ that may contain corrupted values, whose identities are not known. We wish to estimate the mean of the trustworthy vectors robustly by minimizing the impact of the corrupt gradients on the mean estimate, potentially by filtering the corrupt gradients out.  We consider a simple median-based estimator applied to each coordinate $j=1,\dots,d$. First, define the coordinate-wise median as $\left[ { v}_{\sf med} \right]_j = {\sf med}\left( \{ [ v_{i}]_j \}_{i=0}^{k-1} \right)$,
where ${\sf med(\cdot)}$ computes the coordinate-wise medians. Then, our estimator is computed as the mean of the nearest \mbox{$(1-\alpha)k$} neighbors of $\left[ { v}_{\sf med} \right]_j$, where $\alpha$ is a chosen threshold parameter such that $\alpha k\in\mathbb{N}_0$. 
We propose the estimator
\begin{equation}\label{eq:median}
[\widehat{v} ]_j = \frac{1}{(1-\alpha)k} \sum_{i \in {\cal N}_j}  [{ v}_i ]_j,
\end{equation}  
where $ {\cal N}_j = \{ i \in \{0,1,\dots,k-1\}: \big| \big[ { v}_{i} -  { v}_{\sf med} \big]_j \big| \leq r_j \}$, 
such that $r_j$ is chosen to satisfy $|{\cal N}_j| = (1-\alpha)k$. 

The outcome of this estimator depends on the threshold parameter $\alpha$. If $\alpha$ is chosen such that the number of trustworthy vectors is less than $(1-\alpha)k$, then ${\cal N}_j$ will contain arbitrarily corrupted gradients and the estimate will be arbitrarily corrupted. However, if $\alpha$ is  chosen such that the number of trustworthy vectors is at least $(1-\alpha)k$, we have the following theoretical guarantees for the performance of this estimator:

\begin{proposition}\cite[Proposition 2]{turan2021resilient}\label{prop:median}
Let ${\cal H}$ be the set of trustworthy vectors and $|{\cal H}|\geq (1-\alpha)k$. Let $\overline{v}_{\cal H}$ be the mean of the trustworthy vectors. Suppose that $\max_{i \in {\cal H}} \|  v_{i} - \overline{ v}_{\cal H} \|_\infty \leq r$,
then for any $\alpha \in [0, {1}/{2})$, it holds that:
\begin{equation}
    \label{eq:median_bound}
\|\widehat{v}  - \overline{v}_{\cal H} \| \leq C_\alpha r,
\end{equation}
where
\begin{equation}
    \label{eq:calpha}C_{\alpha}=\frac{2\alpha}{1-\alpha} \left( 1 + \sqrt{\frac{(1-\alpha)^2}{1-2\alpha}}\right)\sqrt{d}.
\end{equation}
\end{proposition}

We note that the right hand side of~\eqref{eq:median_bound} can be approximated as ${\cal O}(\alpha r\sqrt{d})$ for small $\alpha$. 

\noindent \textbf{\bt{Temporal robust aggregation}:}
Following the mechanics of the robust mean estimator, two scenarios can happen every time \bt{temporal robust aggregation} is applied in Step~\ref{avgstep} of Algorithm~\ref{alg:distriutedavgnormalizedgd} to a window of $m$ latest gradients from each agent: (i) there are less than $(1-\alpha_1)m$ trustworthy gradients in the window of size $m$; (ii) there are at least $(1-\alpha_1)m$ trustworthy gradients in the window of size $m$. Under scenario (i), ${\cal N}_j$ contains arbitrarily corrupted gradients, and therefore we have to assume that the estimated mean is arbitrarily corrupted. We say that the \bt{temporal robust aggregation} \emph{fails} in this scenario. Under scenario (ii), the estimated mean is close to the true mean of the trustworthy gradients, and the error is bounded by \eqref{eq:median_bound}. Therefore if scenario (ii) happens at any iteration, instead of using a probably corrupt gradient that can be adversarial, the \bt{temporal robust aggregation} step computes a robustified gradient close to the mean of past trustworthy gradients. We say that the \bt{temporal robust aggregation} \emph{succeeds} in this scenario. Accordingly, we can view scenario (ii) as a perturbed version of the delayed gradient method, whose convergence properties have been well-studied \cite{gurbuzbalaban2017convergence}.
Note that both scenarios (i) and (ii) happen with some probability determined by $p_t$, $p_b$, $\alpha_1$ and $m$. 

\noindent \textbf{\bt{Spatial robust aggregation}:}
Similar to the \bt{temporal robust aggregation} step, two scenarios can happen every time \bt{spatial robust aggregation} is applied in Step~\ref{aggregationstep} of Algorithm~\ref{alg:distriutedavgnormalizedgd} to $N$ robustified gradients: (I) there are less than $(1-\alpha_2)N$ agents for which the \bt{temporal robust aggregation} step succeeds, (II) there are at least $(1-\alpha_2)N$ agents for which the \bt{temporal robust aggregation} step succeeds. By similar arguments as above, scenario (I) results in an arbitrarily corrupted estimate, whereas scenario (II) results in an estimate that is close to the true mean of the successfully robustified gradients, and the error is bounded by \eqref{eq:median_bound}. We note that both scenarios (I) and (II) happen with some probability determined by $\alpha_2$, $N$, and probabilities of scenarios (i) and (ii).

\noindent \textbf{Gradient normalization:}
The main idea behind the normalization step is to prevent large updates that corrupt gradients might cause. Since in the case of scenario (I), the \bt{temporal robust aggregation} and the \bt{spatial robust aggregation} steps fail to produce an aggregate gradient estimate for which the theoretical error bounds hold, we have to assume that the aggregate gradient estimate becomes arbitrarily corrupted. When this happens and corruptions get past through the two layers of defense, we limit the amount of damage caused by the corrupted aggregate gradient estimate by normalization.

In the next section, we state the convergence guarantees of RANGE for strongly convex and smooth (possibly non-convex) cost functions for the SAA setting.
\section{Convergence Properties of RANGE \\ for the SAA Setting} \label{sec:SAA}
Before presenting the convergence results, we need to state some technical assumptions.

\begin{assumption}\label{ass:subgamma}
For all $k\in [d]$ and $x\in{\cal X}$, define the random variable $f_k(x,z)\eqdef\partial_k f(x,z)-\partial_k F(x)$. We assume that for all $k\in [d]$ and $x\in{\cal X}$, $f_k(x,z)$ is a sub-gamma random variable with variance factor $\sigma$ and scale parameter $a$ for some $a\geq 0$, i.e.:
\begin{equation}
    \ \log{\underset{z\sim{\cal D}}{\mathbb{E}}[e^{\lambda f_k(x,z)}]}\leq \frac{\lambda^2\sigma^2}{2(1-a|\lambda|)},~\forall x, k, |\lambda|<\frac{1}{a}.
\end{equation}
\end{assumption}
Assumption~\ref{ass:subgamma} shows bounded moments of the loss function with respect to the data distribution. Note that sub-Gaussian/sub-exponential random variables satisfy Assumption~\ref{ass:subgamma} with $a=0$/$a=\sigma$, respectively. Therefore, Assumption~\ref{ass:subgamma} is less restrictive than sub-Gaussian/sub-exponential assumptions in the literature \cite{yin2018byzantine,yin2019defending}.
\begin{assumption}\label{ass:smooth}
The function $f(\cdot,z)$ is $L$-smooth, $\forall z\in {\cal Z}$.
\end{assumption}

In addition, when $F(\cdot)$ is strongly convex, we have the following assumption on ${\cal X}$ and the minimizer of $F(\cdot)$:
\begin{assumption}\label{ass:minimizergradient}
The parameter set ${\cal X}$ is assumed to be convex and compact with diameter $R$. Furthermore, $F(x)$ has a unique minimizer $x^\star \in {\cal X}$ satisfying $\nabla F(x^\star)=0$. 
\end{assumption}
Together with the convexity of $F$, the above assumption implies that the minimizer of $F(\cdot)$ in ${\cal X}$ is also the minimizer of $F(\cdot)$ in ${\mathbb R}^d$. We note that by selecting ${\cal X}$ as the euclidean norm ball of a large radius $R$, the assumption can be satisfied. 

Recall from Proposition~\ref{prop:median} that in order for the error bound \eqref{eq:median_bound} to hold in Algorithm~\ref{alg:distriutedavgnormalizedgd} Step~\ref{avgstep}, the robust mean estimator \eqref{eq:median} requires that at least $(1-\alpha_1)m$ vectors in $\{g_{i,t-\tau}\}_{\tau=0}^{m-1}$ are trustworthy gradients (scenario (ii) in Sec.~\ref{sec:range}). Similarly, in order for the error bound \eqref{eq:median_bound} to hold in Algorithm~\ref{alg:distriutedavgnormalizedgd} Step~\ref{aggregationstep}, the robust mean estimator \eqref{eq:median} requires that at least $(1-\alpha_2)N$ vectors in $\{\hat{g}_{i,t}\}_{i\in[N]}$ are successfully robustified (scenario (II) in Sec.~\ref{sec:range}). In order to mathematically formalize these scenarios, we define the following random variables:
\begin{itemize}
    \item $W_{i,t}=1$ if $i\in {\cal B}^t$, $0$ otherwise,
    \item $Y_{i,t}=1$ if for agent $i$, $\sum_{\tau\in[t-m+1,t]}W_{i,\tau}>\alpha_1 m$ (scenario (i)), $Y_{i,t}=0$ otherwise (scenario (ii)),
    \item $Z_t=1$ if $\sum_{i\in[N]}Y_{i,t}>\alpha_2 N$ (scenario (I)), $Z_t=0$ otherwise (scenario (II)).
\end{itemize}
Using the above definitions of random variables, when $Y_{i,t}=1$, the \bt{temporal robust aggregation} step fails to produce a robustified gradient for agent $i$. Therefore when $Z_t=1$, the algorithm fails to produce a robustified update direction as the \bt{spatial robust aggregation} step becomes contaminated. The challenge of the convergence analysis of Algorithm~\ref{alg:distriutedavgnormalizedgd} arises from studying both scenarios $Z_t=0$ and $Z_t=1$ along with their probabilities of happening. However, given the Markovian property of $\{W_{i,t}\}_{\forall t}$, $Z_t$ is not independent of the past, which presents an obstacle in the convergence analysis. To overcome this, we state the next lemma, which establishes a uniform bound on the probability that $Z_t=1$ given the network state at an earlier time instant:

\begin{lemma}\label{lem:practicalbound}
Let ${\cal S}_{t}=\{x_{t},\{\pi_{t}^i\}_{i\in[N]}\}$ denote the system state at iteration $t$, where $\pi_{t}^i$ is the distribution of the state of agent $i$ at iteration $t$. \btt{Define 
\begin{equation}\label{eq:pim0}
    \Pi_{m_0}^1=\frac{p_b+p_t(1-p_b-p_t)^{m_0}}{p_b+p_t},
\end{equation}
for all $m_0\in {\mathbb N}_0$.} Given the algorithm parameters $(m,N,\alpha_1,\alpha_2)$ \bt{and the transition matrix $M$}, for all \btt{$m_0\in {\mathbb N}_0$ such that $1/2>\alpha_1>\Pi_{m_0}^1$} and for all $t\geq m+m_0$, there exists a uniform bound on $\mathbb{P}(Z_t=1|{\cal S}_{t-m+1-m_0})=\mathbb{E}[Z_t|{\cal S}_{t-m+1-m_0}]$ independent of ${\cal S}_{t-m+1-m_0}$ such that:
\begin{equation}
    \mathbb{E}[Z_t|{\cal S}_{t-m+1-m_0}]\leq P_Z(m_0,m,N,\alpha_1,\alpha_2,M).
\end{equation}
Let $P_Z^m(m_0)\eqdef P_Z(m_0,m,N,\alpha_1,\alpha_2,M)$. Then, the above bound holds for:
\begin{align}\label{eq:PZpractical}
    P_Z^m(m_0){=}\hspace{-.4cm}\sum_{k=\alpha_2 N+1}^N\hspace{-.2cm} \binom{N}{k}(P_Y^m(m_0))^k(1{-}P_Y^m(m_0))^{(N{-}k)},
\end{align}
where
\btt{
\begin{equation}\label{eq:PYpractical}
    P_Y^m(m_0)=\exp{\left(-m(\alpha_1-\Pi_{m_0}^1)^2(p_b+p_t)\right)}.
\end{equation}}
\end{lemma}

Proof of Lemma~\ref{lem:practicalbound} can be found in Appendix~\ref{app:practicalbound}. Given a non-negative integer $m_0\in{\mathbb N}_0$, Lemma~\ref{lem:practicalbound} sets an upper bound on $\mathbb{E}[Z_t|{\cal S}_{t-m+1-m_0}]$ independent of the system state at time $t-m+1-m_0$, but only as functions of the algorithm parameters $(m,N,\alpha_1,\alpha_2)$, \bt{ the transition matrix $M$,} and $m_0$. Although Lemma~\ref{lem:practicalbound} provides a practical closed form bound, it is derived by using a Chernoff-type bound for Markov chains \cite{leon2004optimal}. It facilitates exposition of the method but it is not tight. In Appendix~\ref{app:tighterbound}, we provide a tighter bound on $P_Z^m(m_0)$. Note that by Hoeffding's inequality, we have $P_Z^m(m_0) \leq e^{-2(\alpha_2-P_Y^m(m_0))^2 N}$.
\btt{
\begin{remark}\label{rem:Pzpbpt}
It is worthwhile to discuss how the upper bound on \eqref{eq:PZpractical} depends on $p_b$ and $p_t$. By chain rule, we have that:
\begin{equation}
    \frac{d P_Z^m(m_0)}{d p_i}=\frac{d P_Z^m(m_0)}{d P_Y^m(m_0)}\times\frac{d P_Y^m(m_0)}{d p_i},~\textnormal i\in\{b,t\}.
\end{equation}
Note that $P_Z^m(m_0)$ is $1-F_B(\alpha_2N,N,P_Y^m(m_0))$, where $F_B$ is the cumulative distribution function of the binomial distribution with parameters $(N,P_Y^m(m_0))$ evaluated at $\alpha_2N$. $F_B(\alpha_2N,N,P_Y^m(m_0))$ is given by \cite{wadsworth1960introduction}:
\begin{equation}
    (N{-}\alpha_2N)\binom{N}{\alpha_2N}\int_{0}^{1{-}P_Y^m(m_0)}\hspace{-.4cm}t^{N{-}\alpha_2 N{-}1}(1{-}t)^{\alpha_2N}dt,
\end{equation}
which is decreasing with $P_Y^m(m_0)$. Accordingly, we have that ${d P_Z^m(m_0)}/{d P_Y^m(m_0)}> 0$. We also show in Appendix~\ref{app:Pypbpt} that for $m_0={\cal O}(1/(p_b+p_t))$, i.e., when $m_0$ is chosen at the order of the mixing time
\begin{equation}
    \frac{d P_Y^m(m_0)}{d p_b}>0,~~ \frac{d P_Y^m(m_0)}{d p_t}<0.
\end{equation}
Therefore, ${d P_Z^m(m_0)}/{d p_b}>0$ and ${d P_Z^m(m_0)}/{d p_t}<0$. This is in accordance with the intuition that the corruption rate increases with $p_b$ and decreases with $p_t$. 
In Figure~\ref{fig:PZ}, we plot $P_Z^m(m_0)$ for varying $p_b$ and $p_t$ while keeping the rest of the variables constant.
\end{remark}
\begin{remark}\label{rem:2}
We discuss the upper bound in Lemma~\ref{lem:practicalbound} for the edge cases of the parameters  $p_b$ and $p_t$.
Lemma~\ref{lem:practicalbound} requires $0<p_b<p_t<1/2$, i.e., the Markov chain to be ergodic. Section~\ref{sec:special3} discusses the case $p_b=p_t=0$. Moreover, for a fixed $p_t$, we can evaluate the bounds as $p_b\rightarrow 0$. Set $m_0=k_0/p_t$ and $m=k/(\alpha_1^2p_t)$ for some $k_0,k\gg 1$. Then, we have that
\begin{align}
    \underset{p_b\rightarrow 0}{\lim}P_Y^m(m_0)&=\exp{\left(-k\frac{(\alpha_1-(1-p_t)^\frac{k_0}{p_t})^2}{\alpha_1^2}\right)}\\
    &\leq \exp{(-k(1-e^{-k_0}/\alpha_1)^2)}\approx 0
\end{align}
Therefore, $P_Z^m(m_0)\approx 0$ for $k,k_0\gg 1$ as ${p_b\rightarrow 0}$.\\
In the other extreme as both $p_b,p_t\rightarrow 1/2$, we have $\Pi^1_{m_0}= 1/2$ and therefore $P_Y^m(m_0)= 1$, which results in $P_Z^m(m_0)= 1$. This is because when the corruption rate $p_b/(p_b+p_t)\rightarrow 1/2$, it becomes impossible to distinguish trustworthy information from the corrupted.
\end{remark}
}

\begin{figure}[t]
    \centering
    \includegraphics[width=.8\linewidth]{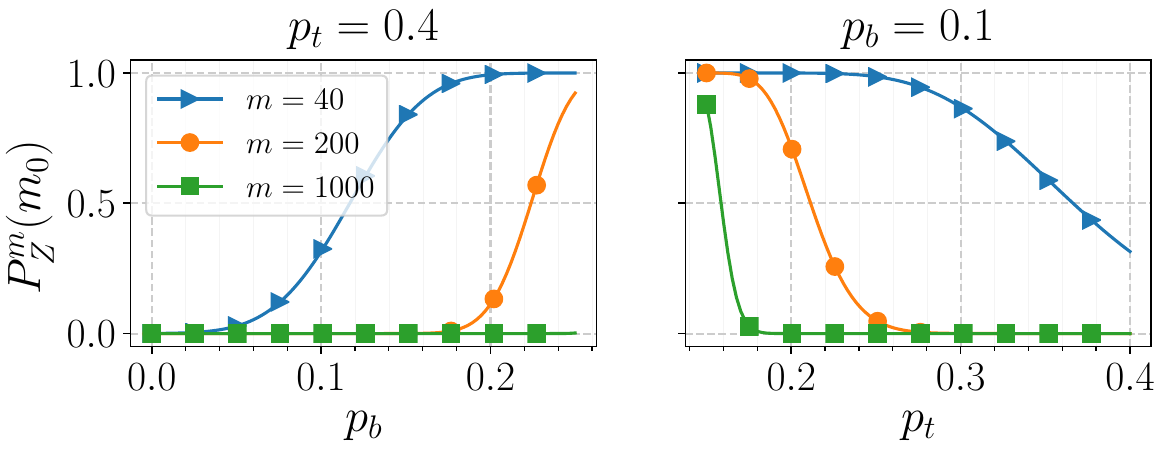}
    \caption{\btt{Plot of $P_Z^m(m_0)$ for varying $p_b$ with $p_t=0.4$ (left), and varying $p_t$ with $p_b=0.1$ (right) for window sizes $m=\{40,200,1000\}$. The rest of the parameters are kept constant at $\alpha_1=0.45$, $\alpha_2=0.3$, $N=10$, and $m_0=100$.}}
    \label{fig:PZ}
\end{figure}
\subsection{Strongly Convex and Smooth Functions}

We are now ready to present the main technical result on convergence guarantees of RANGE for a strongly convex cost function $F(\cdot)$. For the following result, we do not require strong convexity of individual cost functions $f(\cdot,z)$.
\begin{theorem}\label{thm:stronglycvx}
Let $F(\cdot)$ be $\mu$-strongly convex and Assumptions~\ref{ass:subgamma},~\ref{ass:smooth}, and~\ref{ass:minimizergradient} hold. Define the condition number as $\kappa\eqdef L/\mu$. Let $m_0\in {{\mathbb N}_0}$ be a non-negative integer \bt{such that $\Pi_{m_0}^1<1/2$}. If the algorithm parameters $(m,N,\alpha_1,\alpha_2)$ \bt{and the transition matrix $M$} satisfy
\begin{equation}\label{eq:stronglycvxcondition}
    P_Z^m(m_0)< \frac{1}{1+\kappa},
\end{equation}
then for any $T\geq 1$, the iterates produced by Algorithm~\ref{alg:distriutedavgnormalizedgd} in the SAA setting, with
\bt{\begin{equation}
    \gamma\leq\min\left\{\frac{4\sigma}{\overline{C}(m_0)\mu\sqrt{(1-\alpha_2)Nb}},\frac{\kappa R}{2}\right\},
\end{equation}}
where
\begin{equation}
    \begin{split}\label{eq:cbar}
    \overline{C}(m_0)=&1+4P_Z^m(m_0)(1+1/\kappa)(m-1+m_0)\\
    &+4\kappa(m-1)(1+C_{\alpha_1}+2C_{\alpha_2}(C_{\alpha_1}+1)),
\end{split}
\end{equation}
have the following property:
\begin{equation}\label{eq:stronglycvxresult}
\begin{split}
    \mathbb{E}&[\|x_{T+m+m_0}-x^\star\|^2]\\
        &\leq \left(\|x_{1}-x^\star\|+\gamma(m+m_0-1)\right)^2 \, \big(1-c_0(m_0)\gamma \big)^T\\
        &\quad + {\cal O}\left(\frac{1}{\sqrt{N b}}+\frac{C_{\alpha_2}}{\sqrt{b}}\right),
\end{split}
\end{equation}
where
\begin{align}
     \label{eq:c0}c_0(m_0)=&\frac{2}{\kappa R}(1-P_Z^m(m_0)(1+\kappa)),
\end{align}
and $C_{\alpha_i}$ for $i=1,2$, are given by \eqref{eq:calpha}.
\end{theorem}
\bt{\textit{Proof outline: }The proof follows by bounding the distance of $x_{t+1}$ to the optimal solution $x^\star$ in terms of $x_{t}$ via perturbed gradient analysis. We define the perturbation as $\nabla F(x_t)/\|\nabla F(x_t)\|-\hat{\hat{g}}_t/\|\hat{\hat{g}}_t\|$ and bound the norm of the perturbation in the events $Z_{t}=1$ and $Z_{t}=0$ separately.}

\bt{When $Z_t=1$, we assume the worst-case scenario such that $\hat{\hat{g}}_{t}$ is moving $x_t$ in the opposite direction of $x^\star$. When $Z_t=0$, we split the perturbation into 3 terms (Lemma~\ref{lem:errorbound} in Appendix~\ref{app:errorbound}): 1) the error due to stochastic and delayed gradients, which depends on the variance $\sigma^2$, step-size $\gamma$, window size $m$, 2) the error of the temporal robust aggregator given by the median-based mean estimator's bound \eqref{eq:median_bound}, which is proportional to the maximum distance between the stochastic gradients in a window of $m$, and 3) the error of the spatial robust aggregator given by \eqref{eq:median_bound} again, which depends on the maximum distance between the robustified gradients of the $N$ agents. To upper bound the expected value of the aforementioned maximum distances, we use Theorem 2.5 in \cite{boucheron2013concentration}, which offers a convenient bound for the expected value of the maximum of finitely many exponentially integrable random variables. \hfill$\square$}

The complete proof of Theorem~\ref{thm:stronglycvx} and the explicit constants of~\eqref{eq:stronglycvxresult} can be found in Appendix~\ref{app:stronglycvx}. According to Theorem~\ref{thm:stronglycvx}, RANGE provides convergence to a neighborhood of the optimal solution at a linear rate as long as~\eqref{eq:stronglycvxcondition} is satisfied. The neighborhood of convergence is 
\begin{equation}\label{eq:strongcvxneighborhood}
     {\cal O}\left(\frac{1}{\sqrt{N b}}+\frac{C_{\alpha_2}}{\sqrt{b}}\right),
\end{equation}
\bt{where $N$ is the number of agents and $b$ is the number of data samples used for gradient computation (i.e., mini-batch size)}.
\btt{
\begin{remark}
The convergence rate in \eqref{eq:stronglycvxresult} is governed by the term $(1-c_0(m_0)\gamma)$. Accordingly, the smaller $c_0(m_0)$, the slower the convergence.  Note that $c_0(m_0)$ given by \eqref{eq:c0} is decreasing with $\kappa$. For ill-conditioned problems with big $\kappa$, $c_0(m_0)$ is small and therefore convergence is slower. Additionally, observe that $c_0(m_0)$ is decreasing with $P_Z^m(m_0)$. In accordance with Remark~\ref{rem:Pzpbpt}, the bigger $p_b$ or the smaller $p_t$, the slower the convergence since $P_Z^m(m_0)$ gets bigger.
\end{remark}
}

\begin{remark}
In Appendix~\ref{app:stronglycvx}, we show that the only dependence of the neighborhood of convergence in~\eqref{eq:strongcvxneighborhood} on $m_0$ is through $P_Z^m(m_0)$ given by~\eqref{eq:PZpractical}. By taking $m_0\gg 1$, we minimize~\eqref{eq:pim0} to get $\Pi_{m_0}^1\approx p_b/(p_b+p_t)$, which then minimizes $P_Y^m(m_0)$ and $P_Z^m(m_0)$. Hence we get a tight asymptotic bound with respect to $m_0$ for $m_0\rightarrow\infty$.
\end{remark}


\noindent\textbf{Impact of \bt{Temporal Robust Aggregation}:} 
The \bt{temporal robust aggregation} step helps reducing the neighborhood of convergence by reducing the effective fraction of Byzantine agents at each iteration. The convergence neighborhood given by \eqref{eq:strongcvxneighborhood} consists of two terms: first term due to variance of the stochastic gradients and the second term due to Byzantine agents. In \cite{yin2018byzantine}, it is shown that in the setting with a bounded $\alpha$ fraction of Byzantine agents, no algorithm can achieve an error lower than
\begin{equation}\label{eq:lowerbound}
    \tilde{{\Omega}}\left(\frac{1}{\sqrt{Nb}}+\frac{\alpha}{\sqrt{b}}\right).
\end{equation}
In the stationary distribution of the Markov chain, the probability that an agent is Byzantine is equal to $p_b/(p_b+p_t)$, and the expected fraction of Byzantine agents in an iteration is $p_b/(p_b+p_t)$. Therefore, it is reasonable to argue that $\alpha$ in~\eqref{eq:lowerbound} is similar to $p_b/(p_b+p_t)$. For Lemma~\ref{lem:practicalbound} to hold, we need \bt{ $\alpha_1>\Pi_{m_0}^1\approx{p_b}/({p_b+p_t})$ for $m_0\gg 1$} and thus $\alpha_1$ is of the order of $\alpha$ in~\eqref{eq:lowerbound}. On the other hand, our error bound in \eqref{eq:strongcvxneighborhood} is a function of $C_{\alpha_2}$, where $C_{\alpha_2}={\cal O}(\alpha_2)$ for small $\alpha_2$. Because RANGE aims to eliminate $\alpha_2$ fraction of agents' robustified gradients via \bt{spatial robust aggregation}, it can be viewed as the effective fraction of Byzantine agents, and hence our bound is consistent with \eqref{eq:lowerbound}. Interestingly, we can set $\alpha_2$ as arbitrarily small as possible. Note that we need to satisfy \eqref{eq:stronglycvxcondition} for convergence. However, in~\eqref{eq:PZpractical}, we can select $\alpha_2$ sufficiently small for small $P_Y^m(m_0)$. But $P_Y^m(m_0)$ is given by \eqref{eq:PYpractical}, which is a function of $\alpha_1$, $m$, and $m_0$. As such, we can always set $m$ arbitrarily large such that $P_Y^m(m_0)$ is arbitrarily small for all $m_0\in{\mathbb N}_0$, and hence we can select $\alpha_2$ arbitrarily small to satisfy \eqref{eq:stronglycvxcondition}. As a result, by employing the \bt{temporal robust aggregation} step before \bt{spatial robust aggregation}, which is beneficial only when agents' states change over time, we reduce the effective fraction of Byzantine agents from $p_b/(p_b+p_t)$ to $\alpha_2$. This setup shows that the lower bound~\eqref{eq:lowerbound}  does not hold for the proposed Markovian setting as a result of \bt{temporal robust aggregation}. 
\bt{
\begin{remark}
Without any corruption, RANGE suffers an error of ${\cal O}(1/\sqrt{Nb})$. Even if we use unbiased stochastic gradients with diminishing step-sizes, this error is unavoidable due to the normalization step. This is because as observed by \cite{cutkosky2020momentum}, if $\nabla F_i(x_t)=\nabla F(x_t)+\eta$ for very small $\eta$, it might be that $\nabla F_i(x_t)/\|\nabla F_i(x_t)\|$ is very far from $\nabla F(x_t)/\|\nabla F(x_t)\|$, especially when $\|\nabla F(x_t)\|$ is close to 0. On the other hand, the normalization step is the key feature of our algorithm to achieve robustness. Due to the stochasticity of the Markovian model, there is a probability ( $\leq P_Z^m(m_0)$) that the first two layers of robust aggregation fail. In this case, the overall aggregate can be arbitrarily corrupted and we require normalization to defend against such cases. Figure~\ref{fig:normalization} numerically demonstrates the importance of normalization by simulating RANGE with and without normalization for the linear regression problem in Section~\ref{sec:linearreg}.
\end{remark}}
\begin{figure}
    \centering
    \includegraphics[width=.75\linewidth]{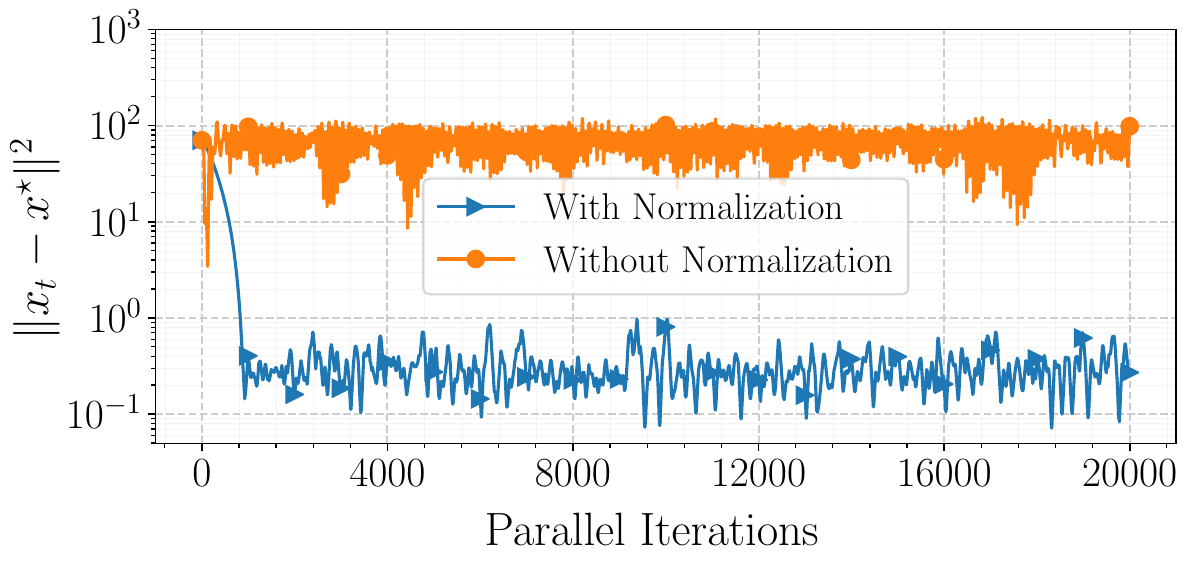}
    \vspace{-.2cm}
    \caption{\bt{RANGE with and without the normalization step for the linear regression problem in Section~\ref{sec:linearreg}.}}
    \label{fig:normalization}
\end{figure}

Observe that choice of $m$, $\alpha_1$, and $\alpha_2$ plays an important role, as they determine whether~\eqref{eq:stronglycvxcondition} holds or not. Next, we discuss how to pick $m$, $\alpha_1$, and $\alpha_2$ in practice.

\noindent \textbf{Choices of $m$, $\alpha_1$, and $\alpha_2$:}
For a given transition matrix $M$, there always exists a set of parameters $m$, $\alpha_1$, and $\alpha_2$ such that \eqref{eq:stronglycvxcondition} is satisfied. Finding a principled way to select the set of optimal parameters is currently an open question. Instead, next we describe an implementable closed form expression for the minimum window size as a function of $\alpha_1$, $\alpha_2$, $p_t$, and $p_b$. Using Hoeffding's inequality on \eqref{eq:PZpractical}, we rewrite \eqref{eq:stronglycvxcondition} as:
\begin{equation}\label{eq:hoeffdingpz}
    P_Z^m(m_0)\leq \exp (-2(\alpha_2-P_Y^m(m_0))^2N)<\frac{1}{1+\kappa}.
\end{equation}
This gives us the condition on $\alpha_2$\footnote{\label{ft:alpha2}This is the condition on $\alpha_2$ for the Hoeffding bound \eqref{eq:hoeffdingpz} to hold, rather than the exact inequality in \eqref{eq:PZpractical}. Consequently, it results in an additive $\sqrt{\log(1+\kappa)/(2N)}$ term that is independent of $P_Y^m(m_0)$. However, this does not contradict our statement that we can set $\alpha_2$ arbitrarily small by reducing $P_Y^m(m_0)$ with a large window $m$, since that statement is based on the exact form in \eqref{eq:PZpractical} rather than the Hoeffding bound \eqref{eq:hoeffdingpz}.}:
\begin{equation}\label{eq:a2lb}
    \alpha_2>P_Y^m(m_0)+\sqrt{\frac{\log(1+\kappa)}{2N}}.
\end{equation}
\bt{
Considering \eqref{eq:PYpractical}, given $m_0\in {\mathbb N_0}$, if 
\begin{equation}
    \exp{\left(-m(\alpha_1-\Pi_{m_0}^1)^2(p_b+p_t)\right)}<\alpha_2-\sqrt{\frac{\log(1+\kappa)}{2N}}
\end{equation}
holds, then \eqref{eq:stronglycvxcondition} is satisfied. Additionally, we require that $\alpha_1<0.5$ for the algorithm's input so that the robust mean estimator succeeds, which requires $\Pi_{m_0}^1<0.5$. This also gives us a lower bound on $m_0$:
\begin{equation}\label{eq:m0lb}
    m_0>\frac{\log(p_t-p_b)-\log(2p_t)}{\log(1-p_b-p_t)} 
\end{equation}
We can rearrange \eqref{eq:a2lb} to get the minimum window size that is sufficient for convergence as a function of $\alpha_1$, $\alpha_2$, $p_t$, $p_b$, and any choice of $m_0$ that satisfies \eqref{eq:m0lb}:
\begin{corollary}\label{cor:minwindow}
For all $m_0\in \mathbb{N}_0$ satisfying \eqref{eq:m0lb}, if $\alpha_2>\sqrt{\log(1+\kappa)/(2N)}$, $\alpha_1>\Pi_{m_0}^1$, and
\begin{equation}\label{eq:minwindow}
    m>-\frac{\log(\alpha_2-\sqrt{(1+\kappa)/(2N)})}{(\alpha_1-\Pi_{m_0}^1)(p_b+p_t)}
\end{equation}
then \eqref{eq:stronglycvxcondition} holds.
\end{corollary}}

Corollary~\ref{cor:minwindow} is convenient in practice for selecting $\alpha_1$, $\alpha_2$, and $m$. Given $p_b$, $p_t$, and $\kappa$, one picks \bt{$m_0$ and $\alpha_1$ such that $\Pi_{m_0}^1<\alpha_1<0.5$} and $\alpha_2$ such that $\sqrt{\log(1+\kappa)/(2N)}<\alpha_2<0.5^{\ref{ft:alpha2}}$ and $\alpha_2 N\in{\mathbb N}_0$. Then, the window size is $m$ is picked such that $\alpha_1 m \in\mathbb{N}_0$ and~\eqref{eq:minwindow} holds. With these parameter choices, \eqref{eq:stronglycvxcondition} is satisfied. Note that the $(p_t+p_b)$ term in the denominator of~\eqref{eq:minwindow} corresponds to the spectral gap of the Markov chain. Therefore, smaller spectral gap, which also implies larger relaxation and mixing time~\cite{levin2017markov}, results in a larger minimum window size. Intuitively, we select the window size at the order of the mixing time so that the Markov chain gets close to its stationary distribution and the Byzantine agents transition into trustworthy state. This allows the \bt{temporal robust aggregation} step to successfully produce a robustified gradient by extracting the trustworthy information.

\subsection{Smooth (Possibly Non-Convex) Functions}
Next, we study the convergence of RANGE for smooth (possibly non-convex) cost functions. For this problem class, we need the following assumption on ${\cal X}$:
\begin{assumption}\label{ass:noncvxball}
Problem~\eqref{eq:mainproblem} is unconstrained, i.e., ${\cal X}={\mathbb R}^d$.
\end{assumption}
The next theorem states the convergence guarantees of RANGE for smooth $F(\cdot)$:

\begin{theorem}\label{thm:noncvx}
Let Assumptions \ref{ass:subgamma}, \ref{ass:smooth}, and \ref{ass:noncvxball} hold. Choose step-size $\gamma={\gamma_0}/{\sqrt{T}}$ with $\gamma_0>0$. Let $m_0\in {{\mathbb N}_0}$ be a non-negative integer \bt{such that $\Pi_{m_0}^1<1/2$}. If the algorithm parameters $(m,N,\alpha_1,\alpha_2)$ \bt{and the transition matrix $M$} satisfy 
\begin{equation}\label{eq:noncvxcondition}
   P_Z^m(m_0) < 1/2,
\end{equation}
then for any $T \geq 1$, the iterates $\{ x_t \}_{t=m+m_0}^{T+m-1+m_0}$ produced by Algorithm~\ref{alg:distriutedavgnormalizedgd} in the SAA setting satisfy
\begin{equation}\label{eq:thmnoncvxfinalresult}
\begin{split}
               &\frac{1}{T}\sum_{t=m+m_0}^{T+m-1+m_0}\mathbb{E}[\|\nabla F(x_t)\|]\\
               &\leq\frac{F(x_1)-F(x^\star)}{\sqrt{T}\gamma_0(1-2P_Z^m(m_0))}+\frac{\overline{C}(m_0)\gamma_0}{\sqrt{T}}+{\cal O}\left(\frac{1}{T}\right)\\
        &+{\cal O}\left(\frac{1}{\sqrt{N b}}+\frac{C_{\alpha_2}}{\sqrt{b}}\right),
\end{split}
\end{equation}
where
\begin{equation}
\begin{split}\label{eq:cbarnoncvx}
       &\overline{C}(m_0)=L\Big(1/2+4(m-1+m_0)P_Z^m(m_0)\\
       &+2(m-1)(1+C_{\alpha_1}+2C_{\alpha_2}(C_{\alpha_1}+1))\Big),
\end{split}
\end{equation}
and $C_{\alpha_i}$ for $i=1,2$, are given by \eqref{eq:calpha}.
\end{theorem}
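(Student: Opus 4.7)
The approach is to combine the standard $L$-smoothness descent inequality for non-convex objectives with a scenario-based analysis governed by the event $\{Z_t=1\}$, absorbing the normalization in a Cauchy--Schwarz-type trick. Applying $L$-smoothness of $F$ (inherited from Assumption~\ref{ass:smooth}) to the unconstrained update $x_{t+1}=x_t-\gamma\hat{\hat{g}}_t/\|\hat{\hat{g}}_t\|$ yields
\begin{equation}
F(x_{t+1})\le F(x_t)-\gamma\Big\langle\nabla F(x_t),\tfrac{\hat{\hat{g}}_t}{\|\hat{\hat{g}}_t\|}\Big\rangle+\tfrac{L\gamma^2}{2}.
\end{equation}
I would then use the elementary bound $\langle a,b/\|b\|\rangle\ge\|a\|-2\|a-b\|$ in the good scenario $Z_t=0$, and the worst-case $\langle a,b/\|b\|\rangle\ge-\|a\|$ in the bad scenario $Z_t=1$ (the latter is what the normalization buys us). After splitting the expectation of the inner product according to $Z_t$, I obtain a descent of the form $-\gamma(1-2P_Z^m(m_0))\mathbb{E}[\|\nabla F(x_t)\|]$ plus error terms, where the condition $P_Z^m(m_0)<1/2$ in~\eqref{eq:noncvxcondition} is exactly what is needed for the leading coefficient to be positive.

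To quantify the good-scenario error $\|\hat{\hat{g}}_t-\nabla F(x_t)\|$, I would compose the two applications of Proposition~\ref{prop:median}. The outer application in Step~\ref{aggregationstep} contributes a $C_{\alpha_2}$-factor times the spread of the successfully robustified agents' means around their true mean, and the inner application in Step~\ref{avgstep} contributes, per agent, a $C_{\alpha_1}$-factor times the spread of trustworthy gradients within the temporal window. That per-agent spread decomposes into (a) iterate-drift bias $\|\nabla F(x_{t-\tau})-\nabla F(x_t)\|\le L\tau\gamma\le L(m-1)\gamma$, obtained from $L$-smoothness and $\|x_{t+1}-x_t\|\le\gamma$, and (b) sub-gamma stochastic fluctuations of scale $1/\sqrt{b}$ supplied by Assumption~\ref{ass:subgamma}. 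Averaging across the $N$ agents converts the statistical noise into the $1/\sqrt{Nb}$ term, while the residual robust-mean biases produce the $C_{\alpha_2}/\sqrt{b}$ term and the drift prefactor $L(m-1)(1+C_{\alpha_1}+2C_{\alpha_2}(C_{\alpha_1}+1))$ visible inside $\overline{C}(m_0)$ in~\eqref{eq:cbarnoncvx}.

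For the bad-scenario contribution, I would handle the measurability issue by conditioning on ${\cal S}_{t-m+1-m_0}$, invoking Lemma~\ref{lem:practicalbound} to get $\mathbb{E}[\mathbf{1}\{Z_t=1\}\mid{\cal S}_{t-m+1-m_0}]\le P_Z^m(m_0)$, and then bounding $\|\nabla F(x_t)\|\le\|\nabla F(x_{t-m+1-m_0})\|+L(m-1+m_0)\gamma$ using $L$-smoothness and the $m-1+m_0$ steps of drift; this is precisely the origin of the $(m-1+m_0)P_Z^m(m_0)$ term in $\overline{C}(m_0)$. The final step is to telescope the per-iteration descent from $t=m+m_0$ to $T+m-1+m_0$, use $F(x_{T+m+m_0})\ge F(x^\star)$ together with a crude smoothness-based estimate for $F(x_{m+m_0})-F(x_1)$ coming from the initial window (which is absorbed into the $\mathcal{O}(1/T)$ residual), divide by $T\gamma(1-2P_Z^m(m_0))$, and substitute $\gamma=\gamma_0/\sqrt{T}$ to obtain~\eqref{eq:thmnoncvxfinalresult}. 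The main obstacle I anticipate is the coupling between the Markov-chain event $\{Z_t=1\}$ and the iterate $x_t$: because $x_t$ depends on past gradients that are themselves correlated with past Markov states, the conditioning has to be pushed back to ${\cal S}_{t-m+1-m_0}$ to apply Lemma~\ref{lem:practicalbound}, and the price paid for this shift is exactly the extra drift of $m_0$ steps that appears inside $\overline{C}(m_0)$.
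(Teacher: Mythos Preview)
Your proposal is correct and follows essentially the same route as the paper's proof: the smoothness descent inequality, the split on $\{Z_t=0\}$ versus $\{Z_t=1\}$ with your inequality $\langle a,b/\|b\|\rangle\ge\|a\|-2\|a-b\|$ (which is exactly the paper's $\|e_t\|\le 2\|\nabla F(x_t)-\hat{\hat g}_t\|/\|\nabla F(x_t)\|$ after multiplying through), the decomposition of $\|\hat{\hat g}_t-\nabla F(x_t)\|$ via two nested applications of Proposition~\ref{prop:median} plus iterate drift (the paper packages this as Lemma~\ref{lem:errorbound}), and the conditioning on ${\cal S}_{t-m+1-m_0}$ with Lemma~\ref{lem:practicalbound} to decouple $Z_t$ from $x_t$. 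The telescoping, the smoothness bound on $F(x_{m+m_0})-F(x_1)$ for the $\mathcal{O}(1/T)$ term, and the final substitution $\gamma=\gamma_0/\sqrt{T}$ also match.
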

The proof of Theorem~\ref{thm:noncvx} is similar to that of Theorem~\ref{thm:stronglycvx}, and the complete proof and the explicit constants of~\eqref{eq:thmnoncvxfinalresult} can be found in Appendix~\ref{app:noncvx}. According to Theorem~\ref{thm:noncvx}, RANGE produces a point $\tilde{x}\in\{x_{m+m_0},\dots,x_{T+m-1+m_0}\}$ such that
\begin{equation}\label{eq:noncvxneighborhood}
\begin{split}
        \mathbb{E}\left[\|\nabla F(\tilde{x})\|\right]\leq&{\cal O}\left(\frac{1}{\sqrt{N b}}+\frac{C_{\alpha_2}}{\sqrt{b}}+\frac{1}{\sqrt{T}}\right).
\end{split}
\end{equation}
Note that when $T\rightarrow\infty$, the right-hand side of the above inequality is the same as \eqref{eq:strongcvxneighborhood}.
\bt{
\begin{remark}
We mention that Theorem~\ref{thm:noncvx} is valid for smooth convex cost functions and strongly convex cost functions with an unbounded parameter set as well. For convex functions with a bounded set, we can add regularization terms to make them strongly convex, in which case the guarantees of Theorem~\ref{thm:stronglycvx} hold. Adding regularization terms is a typical technique used in optimization, called dual smoothing \cite{nesterov2005smooth}.
\end{remark}}
In the next section, we state the convergence guarantees of RANGE for strongly convex and smooth (possibly non-convex) cost functions for the SA setting.

\section{Convergence Properties of RANGE \\
for the SA Setting}\label{sec:SA}
Theorems~\ref{thm:stronglycvx} and \ref{thm:noncvx} state convergence guarantees of RANGE for the SAA setting. The next theorem states the convergence result for the SA setting for strongly convex cost functions.

\begin{theorem}\label{thm:stronglycvxiid}
Let $F(\cdot)$ be $\mu$-strongly convex and Assumptions~\ref{ass:subgamma}, \ref{ass:smooth}, and \ref{ass:minimizergradient} hold. Define the condition number as $\kappa\eqdef L/\mu$. Let $m_0\in {{\mathbb N}_0}$ be a non-negative integer \bt{such that $\Pi_{m_0}^1<1/2$}. If the algorithm parameters $(m,N,\alpha_1,\alpha_2)$ \bt{and the transition matrix $M$} satisfy
\begin{equation}\label{eq:stronglycvxiidcondition}
    P_Z^m(m_0)< \frac{1}{1+\kappa},
\end{equation}
then for any $T\geq 1$, the iterates produced by Algorithm~\ref{alg:distriutedavgnormalizedgd} in the SA setting, with
\begin{equation}
    \gamma\leq\min\left\{\frac{4\sigma}{\overline{C}(m_0)\mu\sqrt{(1-\alpha_2)N(1-\alpha_1)mb}},\frac{\kappa R}{2}\right\},
\end{equation}
have the following property:
\begin{equation}\label{eq:stronglycvxresultiiddata}
    \begin{split}
        \mathbb{E}&[\|x_{T+m+m_0}-x^\star\|^2]\leq\\
        &\left(\|x_{1}-x^\star\|+\gamma(m+m_0-1)\right)^2(1-c_0(m_0)\gamma)^T\\
        &+{\cal O}\left(\frac{1}{\sqrt{(1{-}\alpha_1)mNb}}{+}\frac{C_{\alpha_2}{+}C_{\alpha_1}(1{+}C_{\alpha_2})}{\sqrt{b}}\right),
    \end{split}
\end{equation}
where $c_0(m_0)$, $\overline{C}(m_0)$ and $C_{\alpha_i}$ for $i=1,2$, are given by \eqref{eq:c0}, \eqref{eq:cbar}, and \eqref{eq:calpha}.
\end{theorem}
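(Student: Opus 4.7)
The plan is to adapt the argument of Theorem~\ref{thm:stronglycvx} to the SA setting. The skeleton of the proof is identical; only the concentration bound on $\hat{\hat{g}}_t-\nabla F(x_t)$ needs to be rederived to exploit the fact that the minibatches used by each agent at different iterations are now independent. Concretely, I would first prove a three-part concentration lemma that replaces the corresponding SAA lemma used in Appendix~\ref{app:stronglycvx}, and then feed the new bound into the same recursive argument.

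For the concentration bound, suppose at iteration $t$ the ``good'' events hold: scenario (ii) holds for $(1-\alpha_2)N$ agents and $Z_t=0$. For such an agent $i$, Proposition~\ref{prop:median} applied to the window $\{g_{i,t-\tau}\}_{\tau=0}^{m-1}$ gives $\|\hat{g}_{i,t}-\bar{g}^{\mathrm{good}}_{i,t}\|\leq C_{\alpha_1}r_i$, where $\bar g^{\mathrm{good}}_{i,t}$ is the sample mean of the $(1-\alpha_1)m$ trustworthy gradients inside the window and $r_i=\mathcal{O}(\sigma/\sqrt{b})+\mathcal{O}(L\gamma m)$ by Assumption~\ref{ass:subgamma} and $L$-smoothness. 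Because the trustworthy gradients in the window now use independent minibatches, a Bernstein-type tail bound shows $\|\bar g^{\mathrm{good}}_{i,t}-\frac{1}{(1-\alpha_1)m}\sum_{\tau}\nabla F(x_{t-\tau})\|=\mathcal{O}(\sigma/\sqrt{(1-\alpha_1)mb})$, i.e., a $\sqrt{(1-\alpha_1)m}$-fold variance reduction compared to SAA. Applying Proposition~\ref{prop:median} a second time to $\{\hat{g}_{i,t}\}_{i\in[N]}$ at level $\alpha_2$, with radius $r'=\mathcal{O}(C_{\alpha_1}/\sqrt{b}+1/\sqrt{(1-\alpha_1)mb}+L\gamma m)$, and using independence across agents to further average by $1/\sqrt{(1-\alpha_2)N}$, yields
\begin{equation*}
\|\hat{\hat g}_t-\nabla F(x_t)\|\leq \mathcal{O}\!\Bigl(\tfrac{1}{\sqrt{(1-\alpha_2)N(1-\alpha_1)mb}}\Bigr)+\mathcal{O}\!\Bigl(\tfrac{C_{\alpha_2}+C_{\alpha_1}(1+C_{\alpha_2})}{\sqrt{b}}\Bigr)+\mathcal{O}(L\gamma(m-1)(1+C_{\alpha_1})(1+C_{\alpha_2})).
\end{equation*}
The cross term $C_{\alpha_1}C_{\alpha_2}$ comes precisely from the nested application of Proposition~\ref{prop:median}: the coordinate-wise error $C_{\alpha_1}/\sqrt{b}$ produced at the per-agent stage enters the radius $r'$ of the aggregation stage, where it is then multiplied by $C_{\alpha_2}$. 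The iterate-drift contribution $L\gamma(m-1)(1+C_{\alpha_1})(1+C_{\alpha_2})$ is exactly what is packaged into the existing constant $\overline{C}(m_0)$ from \eqref{eq:cbar}.

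Once this concentration bound is in place, the rest of the argument copies Appendix~\ref{app:stronglycvx} line-by-line. Expanding $\|x_{t+m+m_0}-x^\star\|^2$ from the projected normalized-gradient update and using nonexpansivity of $\Pi_{\cal X}$, I would condition on ${\cal S}_{t-m+1-m_0}$ and split into the cases $Z_t=0$ and $Z_t=1$. On $Z_t=0$, strong convexity together with the new concentration bound produces the per-step contraction; on $Z_t=1$ the normalized step contributes at most $\gamma$ to $\|x_{t+1}-x^\star\|$. Weighting by Lemma~\ref{lem:practicalbound} gives the effective contraction rate $c_0(m_0)=\tfrac{2}{\kappa R}(1-P_Z^m(m_0)(1+\kappa))$, which is positive exactly under condition \eqref{eq:stronglycvxiidcondition}. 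The stated step-size bound $\gamma\leq 4\sigma/(\overline{C}(m_0)\mu\sqrt{(1-\alpha_2)N(1-\alpha_1)mb})$ is picked so that the iterate-drift piece of the bias is dominated by the stochastic piece, letting the recursion close with the neighborhood having both $1/\sqrt{(1-\alpha_1)mNb}$ and $(C_{\alpha_2}+C_{\alpha_1}(1+C_{\alpha_2}))/\sqrt{b}$ terms. Unrolling the recursion and handling the initial $m+m_0-1$ iterations via the trivial $\gamma$-per-step expansion produces the inflation factor $(\|x_1-x^\star\|+\gamma(m+m_0-1))^2$ in \eqref{eq:stronglycvxresultiiddata}.

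The main obstacle is the nested-Proposition~\ref{prop:median} bookkeeping in the concentration step: one must carefully track which coordinate-wise errors of size $C_{\alpha_1}/\sqrt{b}$ enter the outer radius $r'$, verify that averaging across agents acts only on the stochastic component and not on the deterministic corruption bias, and confirm that the iterate-drift terms $L\gamma m$ propagate through both applications with the correct $(1+C_{\alpha_1})(1+C_{\alpha_2})$ multiplier so that they can be absorbed into $\overline{C}(m_0)$ without disturbing the constants inherited from \eqref{eq:cbar}. Everything else is a transcription of the SAA proof.
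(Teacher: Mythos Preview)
Your proposal is correct and takes essentially the same approach as the paper. The only difference is organizational: rather than deriving a fresh SA concentration lemma, the paper reuses Lemma~\ref{lem:errorbound} verbatim---the terms $\|\nabla F_{i,\tau}(x_{\tau'})-\nabla F_{i,\tau'}(x_{\tau'})\|_\infty$ that vanished in SAA now survive---and then applies Theorem~\ref{thm:expectedmaximum} (an expected-maximum bound for sub-gamma variables, in place of your Bernstein bound) to the three resulting maxima when taking $\mathbb{E}_{z\sim\mathcal{D}}$, after which the argument is literally the SAA proof.
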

Proof of Theorem~\ref{thm:stronglycvxiid} and the explicit constants of \eqref{eq:stronglycvxresultiiddata} can be found in Appendix~\ref{app:stronglycvxiid}. According to Theorem~\ref{thm:stronglycvxiid}, RANGE provides convergence to a neighborhood of the optimal solution at a linear rate, where the neighborhood of convergence is
\begin{equation}\label{eq:stronglycvxerrororderiid}
    {\cal O}\left(\frac{1}{\sqrt{(1{-}\alpha_1)mNb}}{+}\frac{C_{\alpha_2}{+}C_{\alpha_1}(1{+}C_{\alpha_2})}{\sqrt{b}}\right).
\end{equation}
Comparing the above result to \eqref{eq:strongcvxneighborhood}, there are two impacts of using the SA setting instead of the SAA setting:
\begin{enumerate}[wide, labelindent=0pt,topsep=.5mm]
\item The error due to variance of the stochastic gradients reduces by a factor of ${\cal O}(\sqrt{(1-\alpha_1)m})$,
\item The error due to Byzantine agents increases by a factor of ${\cal O}(1+C_{\alpha_1}+C_{\alpha_1}/C_{\alpha_2})$.
\end{enumerate}
When agents use new samples at each iteration in the SA setting, \bt{temporal robust aggregation} results in a variance reduction, since it estimates the mean of $(1-\alpha_1)m$ independent minibatch gradients. However, this comes at the cost of the higher error caused by the Byzantine agents. 
Given these two counteracting impacts of the SA setting on the error, the order of error in \eqref{eq:stronglycvxerrororderiid} is less than \eqref{eq:strongcvxneighborhood} if
\begin{equation}
    C_{\alpha_1}<\frac{1}{\sqrt{N}(1+C_{\alpha_2})}.
\end{equation}
Therefore, RANGE performs  better in the SA setting compared to the SAA setting if $\alpha_1\ll 1$, which is possible if $p_b\ll p_t$. Otherwise, the benefit of variance reduction provided by \bt{temporal robust aggregation} is dominated by the damage caused by the Byzantine agents.

The next theorem states the convergence result of RANGE for the SA setting for non-convex cost functions.
\begin{theorem}\label{thm:noncvxiid}
 Let Assumptions \ref{ass:subgamma}, \ref{ass:smooth}, and \ref{ass:noncvxball} hold. Choose step-size $\gamma={\gamma_0}/{\sqrt{T}}$ with $\gamma_0>0$. Let $m_0\in {{\mathbb N}_0}$ be a non-negative integer \bt{such that $\Pi_{m_0}^1<1/2$}. If the algorithm parameters $(m,N,\alpha_1,\alpha_2)$ \bt{and the transition matrix $M$} satisfy 
\begin{equation}
   P_Z^m(m_0) < 1/2,
\end{equation}
then for any $T \geq 1$, the iterates $\{ x_t \}_{t=m+m_0}^{T+m-1+m_0}$ produced by Algorithm~\ref{alg:distriutedavgnormalizedgd} in the SA setting satisfy
\begin{equation}\label{eq:thmnoncvxfinalresultiiddata}
\begin{split}
               &\frac{1}{T}\sum_{t=m+m_0}^{T+m-1+m_0}\mathbb{E}[\|\nabla F(x_t)\|]\\
               &\leq\frac{F(x_1)-F(x^\star)}{\sqrt{T}\gamma_0(1-2P_Z^m(m_0))}+\frac{\overline{C}(m_0)\gamma_0}{\sqrt{T}}+{\cal O}\left(\frac{1}{T}\right)\\
       &+{\cal O}\left(\frac{1}{\sqrt{(1{-}\alpha_1)mNb}}{+}\frac{C_{\alpha_2}{+}C_{\alpha_1}(1{+}C_{\alpha_2})}{\sqrt{b}}\right),
\end{split}
\end{equation}
where $\overline{C}(m_o)$ and $C_{\alpha_i}$ for $i=1,2$, are given by \eqref{eq:cbarnoncvx} and~\eqref{eq:calpha}.
\end{theorem}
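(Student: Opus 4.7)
The plan is to mirror the proof of Theorem~\ref{thm:noncvx} (the non-convex SAA case) while substituting the gradient-estimation error bounds by those derived in the proof of Theorem~\ref{thm:stronglycvxiid}, which exploit the variance reduction that SA sampling confers on the temporal averaging step. The first step is to invoke $L$-smoothness of $F$ (inherited from Assumption~\ref{ass:smooth}) together with the update $x_{t+1}=x_t-\gamma\hat{\hat{g}}_t/\|\hat{\hat{g}}_t\|$ to get the descent relation
\begin{equation*}
F(x_{t+1})\leq F(x_t)-\gamma\bigl\langle \nabla F(x_t),\hat{\hat{g}}_t/\|\hat{\hat{g}}_t\|\bigr\rangle+\tfrac{L\gamma^2}{2}.
\end{equation*}
Setting $e_t\eqdef\hat{\hat{g}}_t-\nabla F(x_t)$, the direct manipulation $-\langle \nabla F(x_t),\hat{\hat{g}}_t/\|\hat{\hat{g}}_t\|\rangle\leq -\|\hat{\hat{g}}_t\|+\|e_t\|\leq -\|\nabla F(x_t)\|+2\|e_t\|$ holds on the event $\{Z_t=0\}$, while normalization alone gives the worst-case bound $\|\nabla F(x_t)\|$ on $\{Z_t=1\}$. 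Conditioning on $Z_t$ and applying the uniform Markov tail bound of Lemma~\ref{lem:practicalbound} yields
\begin{equation*}
\mathbb{E}[F(x_{t+1})-F(x_t)]\leq -\gamma(1-2P_Z^m(m_0))\,\mathbb{E}\|\nabla F(x_t)\|+2\gamma\,\mathbb{E}\bigl[\|e_t\|\mathbf{1}\{Z_t=0\}\bigr]+\tfrac{L\gamma^2}{2}.
\end{equation*}

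The second step is to bound $\mathbb{E}[\|e_t\|\mathbf{1}\{Z_t=0\}]$ in the SA regime by splitting the error into four contributions: (a) the bias of the outer robust aggregation across the $(1-\alpha_2)N$ successful agents, via Proposition~\ref{prop:median} with $\alpha=\alpha_2$; (b) the bias of the inner temporal averaging at each successful agent, via Proposition~\ref{prop:median} with $\alpha=\alpha_1$; (c) the drift $\|\nabla F(x_{t-\tau})-\nabla F(x_t)\|\leq L\gamma\tau$ across the window, since normalization bounds per-step displacement by $\gamma$; and (d) the sub-gamma concentration of empirical minibatch gradients around the population gradient $\nabla F(x_{t-\tau})$. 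The SA-specific ingredient, inherited from the proof of Theorem~\ref{thm:stronglycvxiid}, is that the independence of minibatch noise across different $\tau$ allows (d) to be controlled by the concentration of an average of $(1-\alpha_1)m$ independent sub-gamma vectors of variance factor $\sigma^2/b$, producing the ${\cal O}(1/\sqrt{(1-\alpha_1)mNb})$ factor rather than ${\cal O}(1/\sqrt{Nb})$. Assembling (a)--(d) produces the SA error rate ${\cal O}(1/\sqrt{(1-\alpha_1)mNb}+(C_{\alpha_2}+C_{\alpha_1}(1+C_{\alpha_2}))/\sqrt{b})$ plus a drift term proportional to $\gamma L(m-1)(1+C_{\alpha_1}+2C_{\alpha_2}(C_{\alpha_1}+1))$.

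Summing the descent inequality from $t=m+m_0$ to $T+m+m_0-1$, telescoping $F$, and lower-bounding the endpoint by $F(x^\star)$ (while absorbing the first $m+m_0-1$ initialization iterations into an ${\cal O}(1/T)$ correction via per-step Lipschitz bounds on $F$ along the normalized trajectory) gives, upon dividing by $T\gamma(1-2P_Z^m(m_0))$ and substituting $\gamma=\gamma_0/\sqrt{T}$, the bound in~\eqref{eq:thmnoncvxfinalresultiiddata}. The drift term from (c) combines with the $L\gamma^2/2$ term from $L$-smoothness to assemble $\overline{C}(m_0)$ exactly as specified in~\eqref{eq:cbarnoncvx}.

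The main technical obstacle is the careful control of $\mathbb{E}[\|e_t\|\mathbf{1}\{Z_t=0\}]$ when the minibatch samples are independent across time but the Byzantine indicators $\{W_{i,\tau}\}$ are Markov-coupled across both iterations and agents; the uniform conditional bound of Lemma~\ref{lem:practicalbound} is precisely what decouples the Markov chain evolution from the iterate trajectory, while the SA independence is what enables the $\sqrt{(1-\alpha_1)m}$ variance reduction that is absent in the SAA version (Theorem~\ref{thm:noncvx}). A secondary subtlety is that piece (b) applies Proposition~\ref{prop:median} to gradients evaluated at different iterates along the window, but the drift contribution (c) absorbs the resulting inhomogeneity and leaves a clean sub-gamma residual that can be handled by Assumption~\ref{ass:subgamma}.
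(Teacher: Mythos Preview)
Your proposal is essentially correct and mirrors the paper's own proof: start from the smoothness descent at~\eqref{eq:thmnoncvxcheckpt1}, split on $Z_t$, invoke Lemma~\ref{lem:errorbound} (your decomposition (a)--(d) is exactly what that lemma does), exploit the SA independence across~$\tau$ to obtain the $\sqrt{(1-\alpha_1)m}$ variance reduction together with the extra $C_{\alpha_1}$ and $C_{\alpha_1}C_{\alpha_2}$ residuals handled by Theorem~\ref{thm:expectedmaximum}, then telescope and set $\gamma=\gamma_0/\sqrt{T}$.

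One clarification is warranted. Your first displayed expectation inequality,
\[
\mathbb{E}[F(x_{t+1})-F(x_t)]\leq -\gamma(1-2P_Z^m(m_0))\,\mathbb{E}\|\nabla F(x_t)\|+2\gamma\,\mathbb{E}\bigl[\|e_t\|\mathbf{1}\{Z_t=0\}\bigr]+\tfrac{L\gamma^2}{2},
\]
is slightly too clean: because $Z_t$ and $x_t$ are dependent through the Markov history, one cannot pass directly from $\mathbb{E}[Z_t\|\nabla F(x_t)\|]$ to $P_Z^m(m_0)\,\mathbb{E}\|\nabla F(x_t)\|$. The paper conditions instead on ${\cal S}_{t-m+1-m_0}$ and uses $\|\nabla F(x_t)\|\leq\|\nabla F(x_{t-m+1-m_0})\|+L\gamma(m-1+m_0)$ (normalization plus smoothness) to obtain $\mathbb{E}[Z_t\|\nabla F(x_t)\|]\leq P_Z^m(m_0)\bigl(\mathbb{E}\|\nabla F(x_t)\|+2L\gamma(m-1+m_0)\bigr)$. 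This extra $4L\gamma^2(m-1+m_0)P_Z^m(m_0)$ term is precisely the $4(m-1+m_0)P_Z^m(m_0)$ contribution inside $\overline{C}(m_0)$ in~\eqref{eq:cbarnoncvx}; it does \emph{not} arise from the window drift~(c) as your final sentence suggests. Once you account for this, the argument goes through exactly as in the paper.
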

Proof of Theorem~\ref{thm:noncvxiid} and the explicit constants of \eqref{eq:thmnoncvxfinalresultiiddata} can be found in Appendix~\ref{app:noncvxiid}. Note that when $T\rightarrow\infty$, the right hand side of \eqref{eq:thmnoncvxfinalresultiiddata} is the same as \eqref{eq:stronglycvxerrororderiid}.


\section{Special Cases}\label{sec:special}
\subsection{Window Size $m=1$}
When we select the window size $m$ to be 1, we have to set $\alpha_1=0$ since $\alpha_1 m \in{\mathbb N}_0$ and $\alpha_1<0.5$. This means that we skip the \bt{temporal robust aggregation} step
and set the robustified gradient $\hat{g}_{i,t}=g_{i,t}$. In this case, the counterpart of Lemma~\ref{lem:practicalbound} with $m=1$ gives:
\begin{align}\label{eq:pzm1}
    P_Z^1(m_0){\leq}\hspace{-.4cm}\sum_{k=\alpha_2 N+1}^N\hspace{-.2cm} \binom{N}{k}(P_Y^1(m_0))^k(1{-}P_Y^1(m_0))^{(N{-}k)},
\end{align}
where
\begin{align}
   \nonumber P_Y^1(m_0)&{=}\underset{i\in[N],t}{\max}\mathbb{P}(Y_{i,t}{=}1|{\cal S}_{t-m_0}){=}\underset{i\in[N],t}{\max}\mathbb{P}(W_{i,t}{=}1|{\cal S}_{t-m_0})\\
    &=\frac{p_b+p_t(1-p_b-p_t)^{m_0}}{p_b+p_t}.\label{eq:pym1}
\end{align}
Accordingly, Theorems~\ref{thm:stronglycvx}, \ref{thm:noncvx}, \ref{thm:stronglycvxiid}, and \ref{thm:noncvxiid} hold with $m=1$ and $P_Z^1(m_0)$ given by \eqref{eq:pzm1} and \eqref{eq:pym1}. Note that $P_Y^1(m_0)\rightarrow p_b/(p_b+p_t)$ as $m_0\rightarrow \infty$. 
\subsection{Independent Random Corruption}
A special case of the agents' state transition occurs when $p_b+p_t=1$. We get $M=[{\pi^\star}^T~{\pi^\star}^T]^T$, and hence the state of an agent at $t+1$ is independent of the state at $t$. In this case, an agent becomes Byzantine and sends corrupted gradient information randomly with probability $p_b$ at all iterations. Hence, we can state the counterpart of Lemma~\ref{lem:practicalbound} without the need to condition on a previous time instant, i.e.,
\begin{equation}\label{eq:pznonmarkovian}
   P_Z^m\eqdef\mathbb{E}[Z_t] =\hspace{-.3cm}\sum_{k=\alpha_2N +1}^{N}\hspace{-.1cm}\binom{N}{k}(P_Y^m)^k(1-P_Y^m)^{(N-k)},
\end{equation}
where
\begin{equation}\label{eq:pynonmarkovian}
    P_Y^m=\sum_{j=\alpha_1m+1}^{m}\binom{m}{j}p_b^jp_t^{m-j}.
\end{equation}
Accordingly, Theorems~\ref{thm:stronglycvx}, \ref{thm:noncvx}, \ref{thm:stronglycvxiid}, and \ref{thm:noncvxiid} hold with $m_0=0$ and $P_Z^m(m_0)$ replaced by $P_Z^m$ given by \eqref{eq:pznonmarkovian} and \eqref{eq:pynonmarkovian}.
\btt{
\subsection{Recovering the Static Corruption Model}\label{sec:special3}
The static setting where agents do not change states is recovered by letting $p_b=p_t=0$. Recall from Remark~\ref{rem:2} that Lemma~\ref{lem:practicalbound} can not be used in this case as it requires the Markov chain to be ergodic. However, if we assume that $p_b/(p_b+p_t)$ fraction of the agents are initially corrupted, our results recover the static setting by choosing $\alpha_2=p_b/(p_b+p_t)$ and $m_0=0$ to ensure $P_Z^m(0)=0$ for any $m$. Furthermore, we can remove the temporal robust aggregation step by letting $m=1$ and $\alpha_1=1$, since it will not eliminate any corruption and does not provide any variance reduction for the SAA setting.}
\section{Numerical Experiments}\label{sec:numerical}
\begin{figure}
    \centering
    \includegraphics[width=\linewidth]{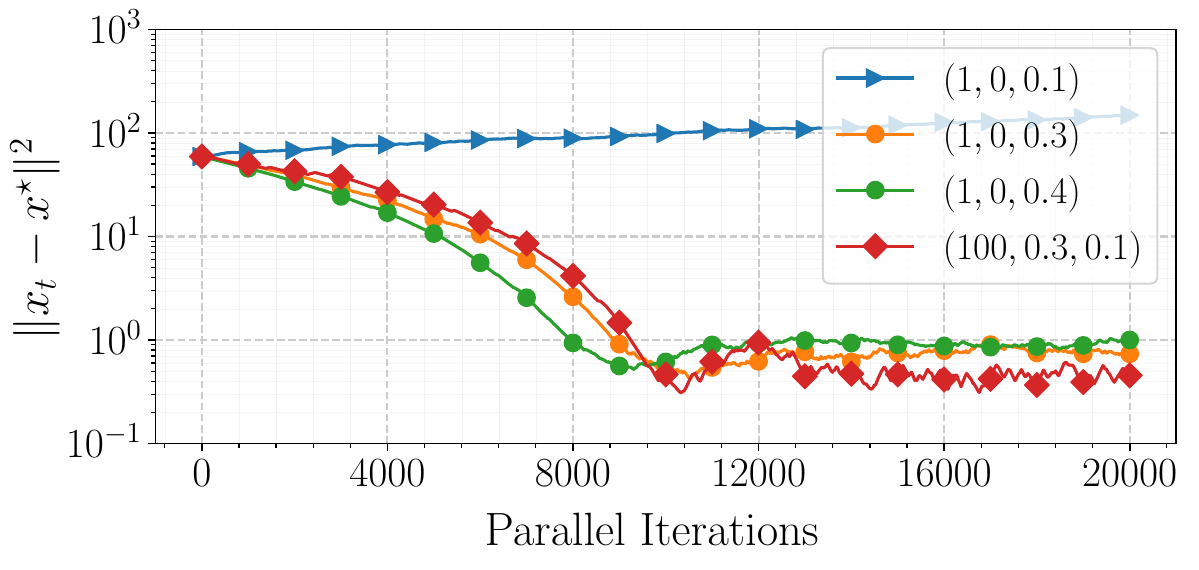}
    \caption{Convergence performance of RANGE in linear regression with four configurations of $(m,\alpha_1,\alpha_2)$ for $p_t=0.1$, $p_b=0.025$.}
    \label{fig:linear_regression}
\end{figure}
In this section, we present numerical evidence supporting our theoretical results and demonstrating the efficacy of RANGE. The first experiment is a simple linear regression with synthetic data to illustrate the benefits of the \bt{temporal robust aggregation} step of RANGE and to compare the SAA and the SA settings. The second experiment is an image classification task on the EMNIST dataset~\cite{cohen2017emnist} using a neural network to compare the performance of RANGE to existing distributed optimization algorithms in practical non-convex tasks for the SAA setting. Both experiments were performed on a laptop computer with Intel$^\textnormal{\textregistered}$ Core$^\textnormal{TM}$ i7-8750H CPU (6$\times$2.20 GHz) and 16 GB DDR4 2666MHz RAM.
\subsection{Linear Regression with Synthetic Data}

\label{sec:linearreg}

We consider the following stochastic optimization problem:
\begin{equation}
    x^\star=\underset{x\in{\cal X}}{\arg\min}~\underset{V,Y}{\mathbb{E}}[|Y-V^Tx|],
\end{equation}
where $V\in {\mathbb R}^d$ is the random vector corresponding to the data points and $Y\in {\mathbb R}$ is the random variable corresponding to the associated label values or outputs. We let $d=100$ and constructed the solution vector $x^\star$ by sampling a random point from the interior of the $d$-ball with radius $R=10$. In the SAA setting, the goal is to solve the following deterministic optimization problem
\begin{equation}
    \underset{x\in{\cal X}}{\min}~\|y-vx\|^2,
\end{equation}
where $v\in\mathbb{R}^{B\times d}$ is a matrix containing the $B$ data vectors in its rows and $y\in\mathbb{R}^B$ is the vector containing the $B$ associated label values or outputs. 
We let $B=1000$ and randomly generated the entries of $v$ from ${\cal N}(0,1)$. We distributed the data points equally among $N=10$ agents. For all $i\in[B]$, we generated the outputs $y$ according to $y_i=v_i x^\star +\xi_i$, where $\xi_i\sim{\cal N}(0,R^2)$ is the noise and $v_i$ is the $i$'th row of~$v$.

\bt{The main goal of the experiment is to demonstrate: 1) how $\alpha_2$ affects the robustness and the performance of the algorithm, and 2) how temporal robust aggregation can help us pick a better $\alpha_2$ and improve the performance. We note that there are two effects of $\alpha_2$ on the performance: 1) In the spatial robust aggregation step, the algorithm eliminates $\alpha_2$ fraction of the robustified gradients $\{\hat{g}_{i,t}\}_{i\in[N]}$ and therefore provides robustness in case there are less than $\alpha_2$ fraction of corrupted  $\{\hat{g}_{i,t}\}_{i\in[N]}$, and 2) by aggregating $(1-\alpha_2)N$ number of agents' robustified gradients, it reduces the variance of the stochastic gradients. In order to guarantee robustness, it is desirable to increase $\alpha_2$ so that we do not add the corrupted $\{\hat{g}_{i,t}\}_{i\in[N]}$ to the aggregate. On the other hand, increasing $\alpha_2$ results in aggregating less number of agents' gradients, which in turn results in a higher variance of the stochastic gradients. Given these counteracting impacts of $\alpha_2$, it is not desirable to choose it too small or too big.}

We ran RANGE for 20k iterations $p_t=0.1$ and $p_b=0.025$ using four configurations of $(m,\alpha_1,\alpha_2)$. At each iteration, we picked the corrupt gradient as $2\|\nabla F(x_t)\|({x^\star-x_t})/{\|x^\star-x_t\|}$. In Figure~\ref{fig:linear_regression}, we plot the convergence behaviour of RANGE for all four configurations.  When $m=1$ and $\alpha_2=0.1$, we observe that the iterates diverge. Since $0.1<p_b/(p_b+p_t)=0.2$, the expected value of the fraction of Byzantine agents at each iteration is larger than $\alpha_2$, and hence the aggregate gradient estimate becomes corrupted most of the time. On the contrary, setting $\alpha_2=0.3$ or $\alpha_2=0.4$ provides robustness and RANGE converges. However, we observe that the configuration with $\alpha_2=0.3$ performs slightly better than the one with $\alpha_2=0.4$. This is because smaller $\alpha_2$  aggregates more agents' gradients, which results in a larger variance reduction. All in all, while selecting a smaller $\alpha_2$ provides variance reduction, it reduces the robustness of RANGE by including more agents at the \bt{spatial robust aggregation} step.

On the other hand, the configuration with $m=100$, $\alpha_1=0.3$ and $\alpha_2=0.1$ outperforms the rest. When we utilize the \bt{temporal robust aggregation} step, it effectively reduces the expected value of the fraction of Byzantine agents at each iteration. Consequently, we can select a smaller $\alpha_2$ in order to benefit from larger variance reduction while still being robust thanks to \bt{temporal robust aggregation}.

\begin{figure}[t]
    \centering
    \includegraphics[width=1\linewidth]{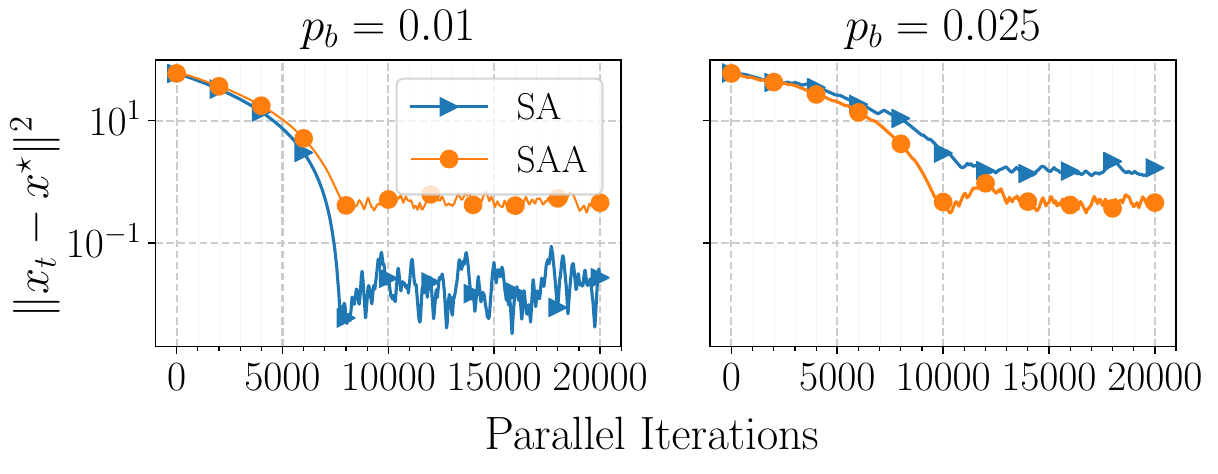}
    \caption{Comparison of the SA and the SAA settings in linear regression.}
    \label{fig:savssa}
\end{figure}

Next, we study the SA setting by re-sampling $B$ new data points at each iteration. Keeping $p_t=0.1$ constant, we simulate a high corruption rate with $p_b=0.025$ and a low corruption rate with $p_b=0.01$. We set $\alpha_1=0.3$ when $p_b=0.025$ and $\alpha_1=0.1$ when $p_b=0.01$. We fixed the window size $m=100$ and $\alpha_2=0.1$ for both cases. In Figure~\ref{fig:savssa} we plot the convergence behavior of RANGE in the SA and the SAA settings for both corruption rates. We observe that when the corruption rate is low, RANGE performs better in the SA setting due to variance reduction provided by \bt{temporal robust aggregation}. However, when the corruption rate is high, RANGE performs worse in the SA setting as the damage caused by the Byzantine agents dominates.
\begin{figure*}[t]
    \centering
    \includegraphics[width=.9\linewidth]{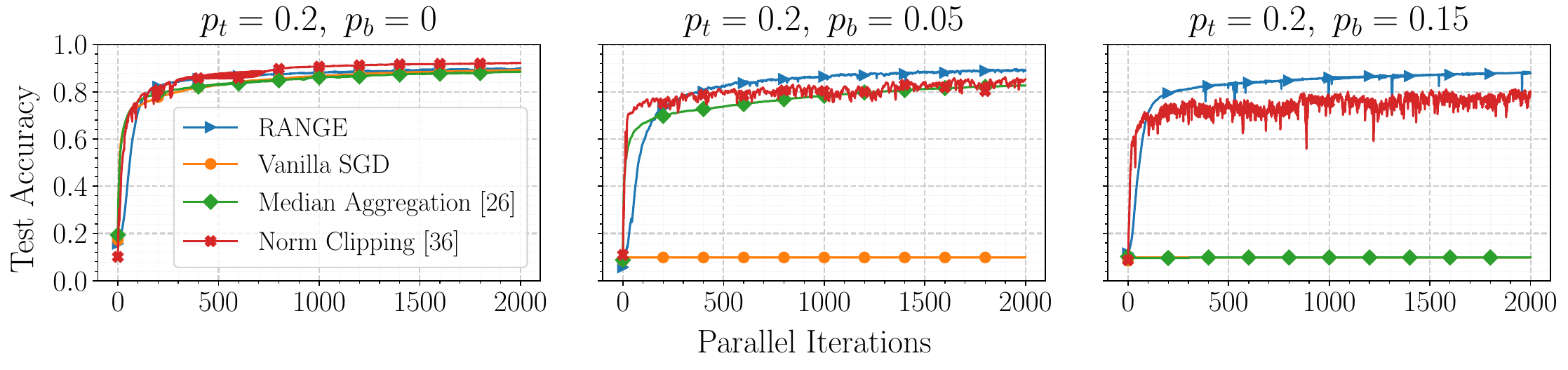}
    \caption{Training neural networks for image classification on the EMNIST dataset. Four distributed optimization algorithms are compared under $p_t=0.2$ and $p_b=0$ (left), $p_b=0.015$ (middle), and $p_b=0.15$ (right) using test accuracy of the trained network as metric. The legend is shared among all plots.}
    \label{fig:mnist}
    \vspace{-.5cm}
\end{figure*}

\subsection{Image Classification with EMNIST Dataset}

In this study, we experiment with the image classification task on the EMNIST dataset \cite{cohen2017emnist} in the SAA setting. We train a feed-forward neural network with two hidden layers and 64 neurons at each hidden layer. We partition the data into $N=200$ equal sizes, representing the data at $N$ agents. The batch size for gradient computation is set to $b=300$.

We train the neural network using (a) RANGE, (b) vanilla SGD, (c) median aggregation \cite{yin2018byzantine}, and (d) norm clipping \cite{sun2019can}. We note that although the median aggregation method in \cite{yin2018byzantine} and the norm clipping method in \cite{sun2019can} are developed for the setting with a bounded fraction of Byzantine agents, we implement them in the Markovian Byzantine agent setting because no existing work studies the same setup as ours. For transition probabilities $p_t=0.2$ and $p_b\in\{0,~0.05,~0.15\}$, we train three networks with learning rates $0.1$, $0.01$, and $0.001$, and pick the best-performing one. We let $m=50$, $\alpha_1=0.25$, $\alpha_2=0.2$ for RANGE when $p_b=0$ and $p_b=0.05$; $m=50$, $\alpha_1=0.45$, $\alpha_2=0.3$ when $p_b=0.15$. We set the threshold for norm clipping to be 10 as it is shown to perform well in \cite{sun2019can}. We simulate corruption by simply inverting and boosting the magnitude of the gradient, i.e., by setting $\star=-c\nabla F_{i,t}(x_t)$ in \eqref{eq:minibatchgradients}, where $c$ is sampled uniformly from $[5,15]$ at each iteration.

In Figure~\ref{fig:mnist} we plot the test accuracy of the models during training. For $p_b=0$, all algorithms successfully train the neural network as expected. For $p_b=0.05$, vanilla SGD has negligible performance as it is not robust to corruption. On the other hand, norm clipping and median aggregation methods have satisfactory performance. Nonetheless, RANGE outperforms norm clipping and median aggregation algorithms by margins of $3.7\%$ and $6.3\%$, respectively. For $p_b=0.15$, we observe that the median aggregation method also fails. The median is no longer robust since it is corrupted if more than half of the agents become Byzantine at any iteration, which frequently happens when $p_b/(p_b+p_t)$ is high. Norm clipping is still robust, however, RANGE beats it by a margin of $8.3\%$.
\section{Conclusions and Future Work}
We introduced a distributed optimization algorithm, named RANGE, that is provably robust to Byzantine failures. By modeling each agent's state transition trajectory over time, namely from trustworthy to Byzantine and vice versa, as a two-state Markov chain, we allow all the agents to be prone to failure. RANGE is based on three ideas: 1) \bt{temporal robust aggregation}, which computes a robustified gradient for each agent by estimating a robust mean of a window of past gradients, 2) \bt{spatial robust aggregation}, which computes a robust mean of all the agents' robustified gradients to estimate the aggregate gradient, and  3) gradient normalization, which restricts the aggregate gradient to only contain directional information and therefore prevents arbitrarily large updates that corrupt gradients might cause. We prove that for strongly convex and smooth non-convex cost functions RANGE achieves \bt{convergence to a neighborhood of a stationary point, where the neighborhood depends on the variance of the stochastic gradients and the corruption rate}. Numerical experiments on linear regression and image classification on EMNIST dataset demonstrate the robustness and efficacy of RANGE.

\bt{The temporal robust aggregation step of RANGE requires the storage of $mN$ number of $d$-dimensional vectors, which can be seen as a trade-off between memory and robustness. It would be an interesting future direction to develop a robust mean estimator that can save on memory costs.} Additionally, future work should study accelerated formulations of RANGE. As shown in~\cite{cutkosky2020momentum} momentum-based approaches accelerate normalized SGD. Whether RANGE benefits from momentum terms while maintaining its robustness properties is an open question. Lastly, future work should study the robustness properties of statistically preconditioning approaches for large-scale distributed optimization methods~\cite{hendrikx2020statistically,yuan2020convergence,dvurechensky2021hyperfast}.

\bibliographystyle{IEEEtran}
\bibliography{references}
\newpage
\begin{IEEEbiography}[{\includegraphics[width=1in,height=1.21in,clip,keepaspectratio]{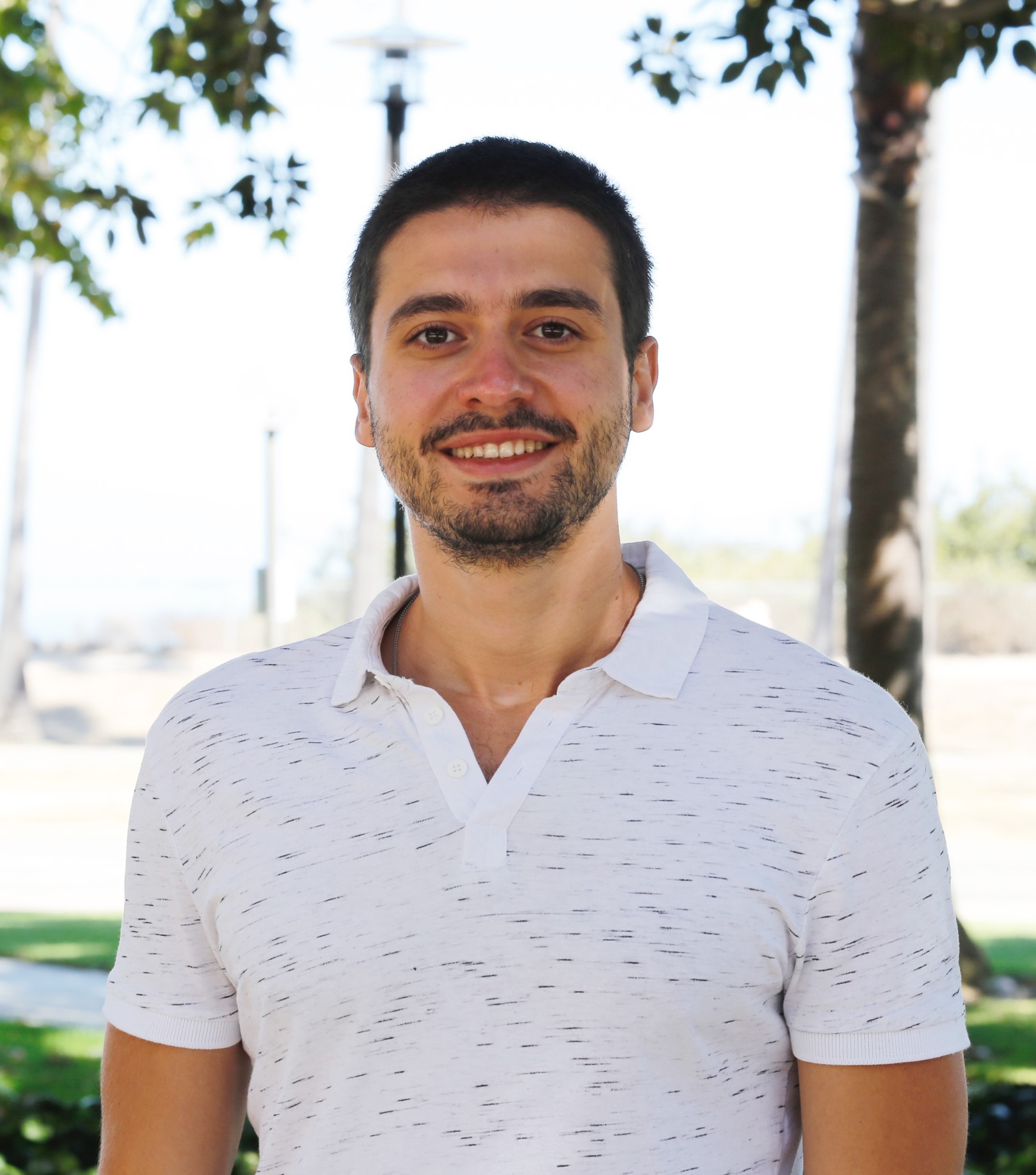}}]{BERKAY TURAN}is pursuing the Ph.D. degree in Electrical and Computer Engineering at the University of California, Santa Barbara. He received the B.Sc. degree in Electrical and Electronics Engineering as well as the B.Sc. degree in  Physics degree from \ Bo\u gazi\c ci University, Istanbul, Turkey, in 2018. His research interests include optimization and reinforcement learning for the design, control, and analysis of smart infrastructure systems such as the power grid and transportation systems.
\end{IEEEbiography}
\vskip -2\baselineskip plus -1fil
\begin{IEEEbiography}[{\includegraphics[width=1in,height=1.21in,clip,keepaspectratio]{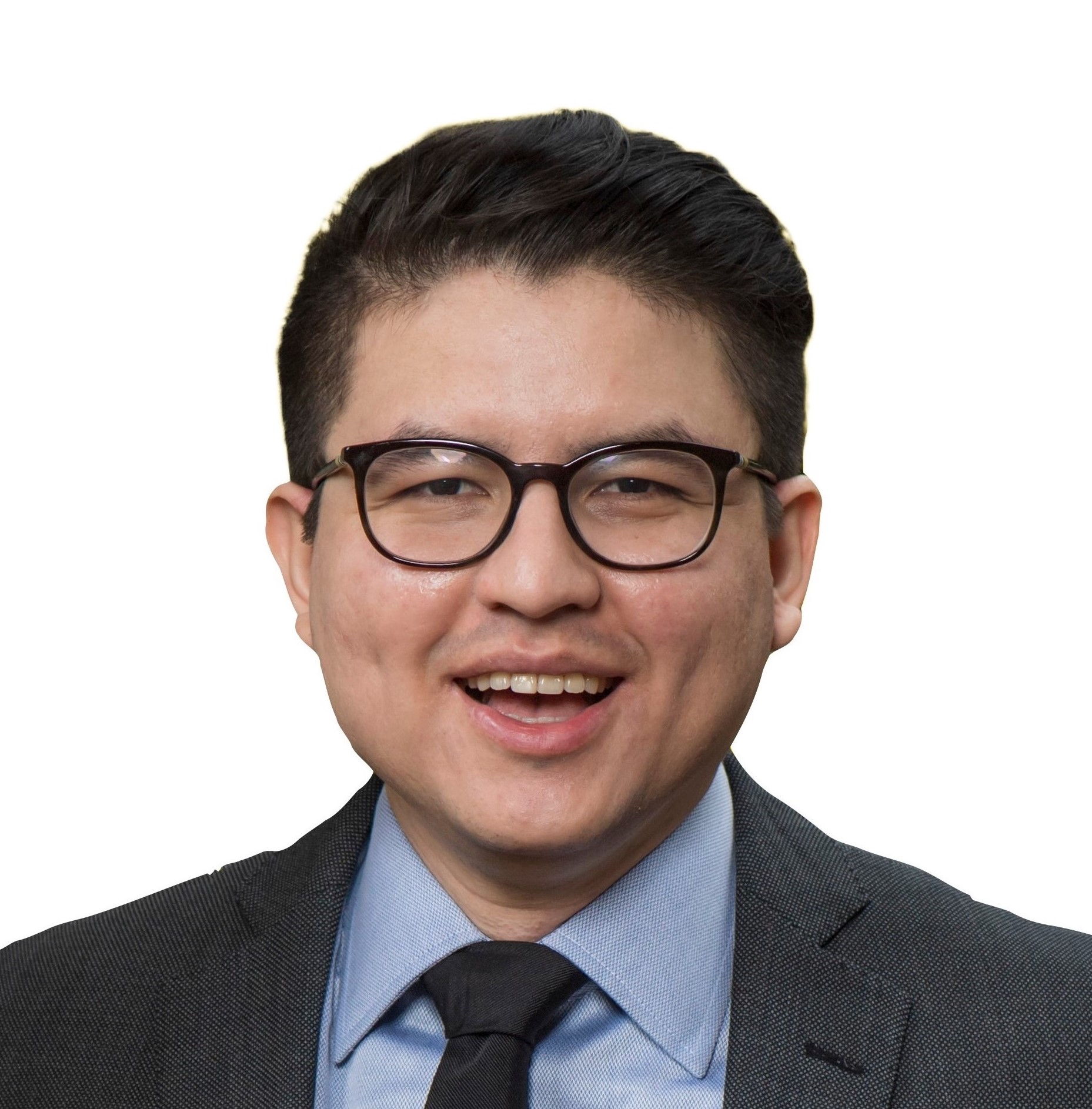}}]{C\'ESAR A. URIBE} is the Louis Owen Jr. Assistant Professor at the Department of Electrical and Computer Engineering at Rice University. He received the M.Sc. degrees in systems and control from Delft University of Technology, in The Netherlands, and in applied mathematics from the University of Illinois at Urbana-Champaign, in 2013 and 2016, respectively. He also received the PhD degree in electrical and computer engineering at the University of Illinois at Urbana-Champaign in 2018.  He was a Postdoctoral Associate in the Laboratory for Information and Decision Systems-LIDS at the Massachusetts Institute of Technology-MIT until 2020 and holds a visiting professor position at the Moscow Institute of Physics and Technology. His research interests include distributed learning and optimization, decentralized control, algorithm analysis, and computational optimal transport.
\end{IEEEbiography}
\vskip -2\baselineskip plus -1fil
\begin{IEEEbiography}[{\includegraphics[width=1in,height=1.21in,clip,keepaspectratio]{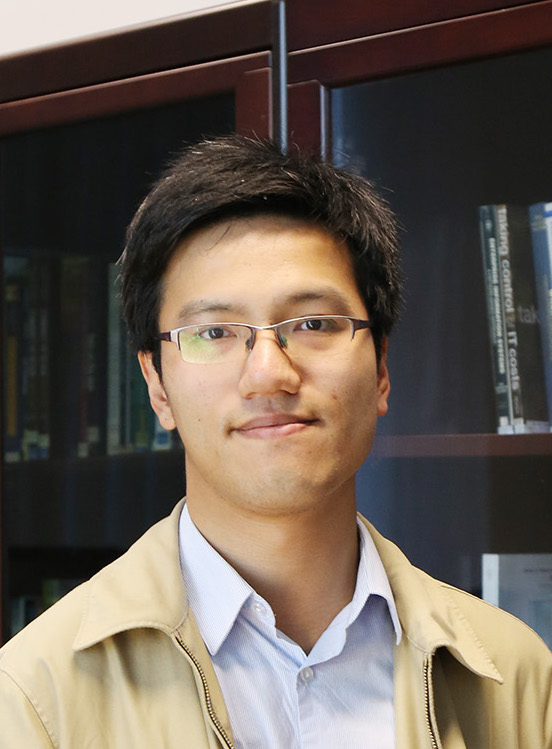}}]{HOI-TO WAI (S’11–M’18)} received his PhD degree from Arizona State University (ASU) in Electrical Engineering in Fall 2017, B. Eng. (with First Class Honor) and M. Phil. degrees in Electronic Engineering from The Chinese University of Hong Kong (CUHK) in 2010 and 2012, respectively. He is an Assistant Professor in the Department of Systems Engineering \& Engineering Management at CUHK. He has held research positions at ASU, UC Davis, Telecom ParisTech, Ecole Polytechnique, LIDS, MIT.
Hoi-To's research interests are in the broad area of signal processing, machine learning and distributed optimization, with a focus of their applications to network science. His dissertation has received the 2017's Dean's Dissertation Award from the Ira A. Fulton Schools of Engineering of ASU and he is a recipient of a Best Student Paper Award at ICASSP 2018.
\end{IEEEbiography}
\vskip -2\baselineskip plus -1fil
\begin{IEEEbiography}[{\includegraphics[width=1in,height=1.21in,clip,keepaspectratio]{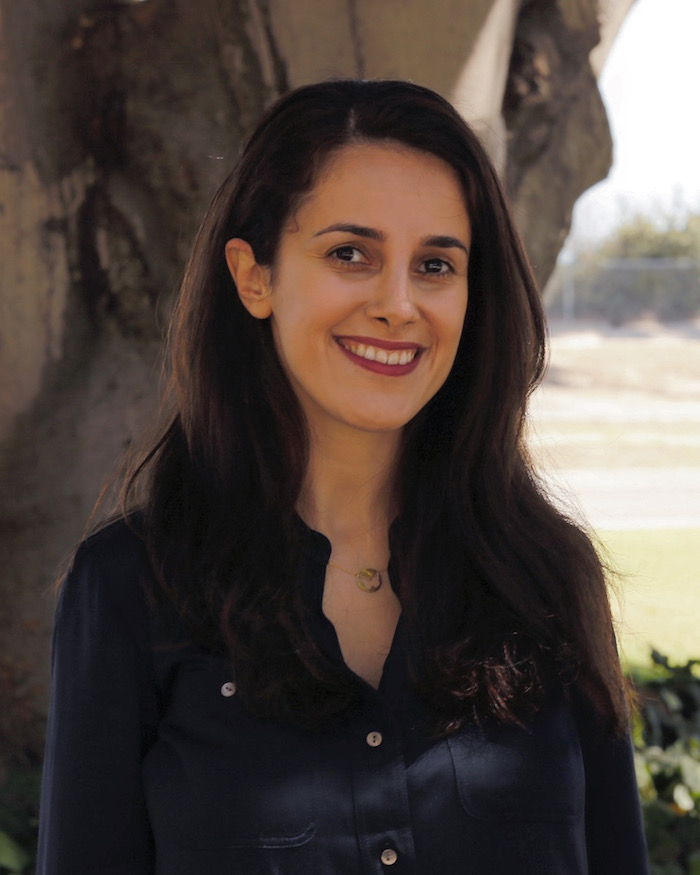}}]{MAHNOOSH ALIZADEH} is an assistant professor of Electrical and Computer Engineering at the University of California Santa Barbara. 
She received the B.Sc. degree (’09) in Electrical Engineering from Sharif University of Technology and the M.Sc. (’13) and Ph.D. (’14) degrees in Electrical and Computer Engineering from the University of California Davis. From 2014 to 2016, she was a postdoctoral scholar at Stanford University. Her research interests are focused on designing network control, optimization, and learning frameworks to promote efficiency and resiliency in societal-scale cyber-physical systems. Dr. Alizadeh is a recipient of the NSF CAREER award.
\end{IEEEbiography}

\newpage

\appendix
\subsection{Proof of Theorem~\ref{thm:stronglycvx}}\label{app:stronglycvx}
The proof will use the following theorem in \cite{boucheron2013concentration} as an auxiliary result, which offers a convenient bound for the expected value of the maximum of finitely many exponentially integrable random variables.

\begin{theorem}\cite[Theorem 2.5]{boucheron2013concentration}\label{thm:expectedmaximum}
Let $Z_1,\cdots,Z_N$ be real-valued random variables such that for every $\lambda\in(0,a)$ and $i=1,\dots,N$, the logarithm of the moment generating function of $Z_i$ satisfies $\mathbb{E}[e^{\lambda Z_i}]\leq \phi(\lambda)$ where $\phi$ is a convex and continuously differentiable function on $[0,a)$ with $0<a\leq\infty$ such that $\phi(0)=\phi'(0)=0$. Then
\begin{equation}
    \mathbb{E}[\underset{i=1,\dots,N}{\max}Z_i]\leq\underset{\lambda\in(0,a)}{\inf} \left[\frac{\log{N}+\phi(\lambda)}{
    \lambda}\right].
\end{equation}
\end{theorem}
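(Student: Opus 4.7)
The plan is to prove the stated maximal inequality by the classical Chernoff--Cram\'er device adapted to the maximum: exponentiate, push the maximum inside the expectation, bound the maximum of the exponentials by their sum, apply the moment-generating-function hypothesis, take logarithms, and finally optimize the free parameter $\lambda$. There is no deep analytic ingredient, so the main task is to arrange the standard chain of inequalities cleanly and then verify that the conditions on $\phi$ make the resulting infimum meaningful. I read the hypothesis ``$\mathbb{E}[e^{\lambda Z_i}] \leq \phi(\lambda)$'' in the sense signaled by the surrounding phrase ``logarithm of the moment generating function,'' i.e. $\log \mathbb{E}[e^{\lambda Z_i}] \leq \phi(\lambda)$, equivalently $\mathbb{E}[e^{\lambda Z_i}] \leq e^{\phi(\lambda)}$.

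The concrete steps are as follows. Fix an arbitrary $\lambda \in (0,a)$ and apply Jensen's inequality with the convex function $x \mapsto e^{\lambda x}$ to the random variable $\max_{1\leq i \leq N} Z_i$; this gives $\exp\!\bigl(\lambda \mathbb{E}[\max_i Z_i]\bigr) \leq \mathbb{E}[\exp(\lambda \max_i Z_i)]$. Since $\lambda > 0$, $x \mapsto e^{\lambda x}$ is monotonically increasing, so $\exp(\lambda \max_i Z_i) = \max_i e^{\lambda Z_i}$. Next, use the elementary bound $\max_i e^{\lambda Z_i} \leq \sum_{i=1}^N e^{\lambda Z_i}$ (valid pointwise because each term is nonnegative), take expectations, and invoke the MGF hypothesis term by term to get $\mathbb{E}[\max_i e^{\lambda Z_i}] \leq \sum_{i=1}^N \mathbb{E}[e^{\lambda Z_i}] \leq N\, e^{\phi(\lambda)}$. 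Chaining the two inequalities yields $\exp\!\bigl(\lambda \mathbb{E}[\max_i Z_i]\bigr) \leq N e^{\phi(\lambda)}$; taking logarithms and dividing by $\lambda > 0$ gives $\mathbb{E}[\max_i Z_i] \leq (\log N + \phi(\lambda))/\lambda$. Since $\lambda$ was arbitrary, passing to the infimum over $(0,a)$ yields the claimed bound.

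The remaining point is to confirm that the infimum on the right-hand side is nonnegative and nontrivial, which is precisely the role played by the structural hypotheses on $\phi$. Convexity of $\phi$ on $[0,a)$ with $\phi(0)=\phi'(0)=0$ gives, via the tangent-line characterization, $\phi(\lambda) \geq \phi(0) + \phi'(0)\lambda = 0$ for all $\lambda \in [0,a)$, so the right-hand side is nonnegative and the logarithm step is safe. Continuous differentiability together with $\phi'(0)=0$ forces $\phi(\lambda)/\lambda \to 0$ as $\lambda \downarrow 0$, so the dominant behavior of the bound near zero is $\log N/\lambda$ (which blows up), while convexity makes $\phi(\lambda)/\lambda$ nondecreasing and eventually large; hence the infimum captures a nontrivial balance. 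I do not anticipate any genuine obstacle beyond bookkeeping: the only subtleties are keeping the direction of Jensen's inequality correct, using the MGF hypothesis in the exponentiated form, and maintaining $\lambda > 0$ when dividing. This reproduces the cited Boucheron--Lugosi--Massart statement essentially verbatim.
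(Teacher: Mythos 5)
Your proof is correct and is precisely the standard argument (Jensen, monotonicity of $x\mapsto e^{\lambda x}$, bounding the maximum by the sum, the MGF hypothesis, and optimizing over $\lambda$) from the cited source; the paper itself imports this result from \cite[Theorem 2.5]{boucheron2013concentration} without proof, so there is nothing to diverge from. You also correctly resolve the typo in the stated hypothesis by reading it as $\log \mathbb{E}[e^{\lambda Z_i}]\leq\phi(\lambda)$, which is the form actually used in the paper's applications.
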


Using the update rule, we write
\begin{align}
        &\|x_{t+1}-x^\star\|^2=\|\Pi_{\cal X}\{x_t-\gamma {\hat{\hat{g}}_t}/{\|\hat{\hat{g}}_t\|}\}-x^\star\|^2\\
        &\hspace{2cm}\leq \|x_t-x^\star-\gamma {\hat{\hat{g}}_t}/{\|\hat{\hat{g}}_t\|}\|^2\\
    &=\|x_t-x^\star\|^2-2\gamma\langle {\hat{\hat{g}}_t}/{\|\hat{\hat{g}}_t\|},x_t-x^\star\rangle+\gamma^2\\
    \begin{split}
    &=\|x_t-x^\star\|^2-2\gamma(1-Z_t)\langle\frac{\hat{\hat{g}}^{Z_t=0}_t}{\|\hat{\hat{g}}^{Z_t=0}_t\|},x_t-x^\star\rangle \\
    &\hspace{.5cm}-2\gamma Z_t\langle\frac{\hat{\hat{g}}^{Z_t=1}_t}{\|\hat{\hat{g}}^{Z_t=1}_t\|},x_t-x^\star\rangle+\gamma^2
    \end{split}\\
    \begin{split}
        &\leq\|x_t-x^\star\|^2-2\gamma(1-Z_t)\langle\frac{\nabla F(x_t)}{\|\nabla F(x_t)\|},x_t-x^\star\rangle \\
        &\hspace{.5cm}-2\gamma(1-Z_t)\langle\frac{\hat{\hat{g}}^{Z_t=0}_t}{\|\hat{\hat{g}}^{Z_t=0}_t\|}-\frac{\nabla F(x_t)}{\|\nabla F(x_t)\|},x_t-x^\star\rangle\\
    &\hspace{.5cm}+2\gamma Z_t\|x_t-x^\star\|+\gamma^2
    \end{split}\\
\begin{split}
            &\leq\|x_t-x^\star\|^2-2\gamma(1-Z_t)\frac{\mu}{L}\|x_t-x^\star\| \\
        &\hspace{.5cm}-2\gamma(1-Z_t)\langle\frac{\hat{\hat{g}}^{Z_t=0}_t}{\|\hat{\hat{g}}^{Z_t=0}_t\|}-\frac{\nabla F(x_t)}{\|\nabla F(x_t)\|},x_t-x^\star\rangle\\
    &\hspace{.5cm}+2\gamma Z_t\|x_t-x^\star\|+\gamma^2
    \end{split}\\
    \begin{split}\label{eq:thmstrongcvxcheckpt1}
            &\leq\|x_t-x^\star\|^2-\frac{2\gamma}{\kappa}(1-Z_t(1+\kappa))\|x_t-x^\star\| \\
        &\hspace{.5cm}+2\gamma\|e_t\|\|x_t-x^\star\|+\gamma^2,
    \end{split}
\end{align}
where 
\begin{equation}
    e_t=\frac{\nabla F(x_t)}{\|\nabla F(x_t)\|}-\frac{\hat{\hat{g}}^{Z_t=0}_t}{\|\hat{\hat{g}}^{Z_t=0}_t\|}
\end{equation}
In case of the event $Z_t=0$, we know that there are at least $(1-\alpha_2)N$ agents for which $Y_{i,t}=0$. Therefore, we define the following sets:
\begin{itemize}[wide, labelindent=0pt,topsep=.5mm]
    \item Define ${\cal T}^t$ as the set of $(1-\alpha_2)N$ agents with smallest indices $i\in[N]$ for which $Y_{i,t}=0$, i.e., 
    \begin{equation}
        {\cal T}^t=\{i|i\in[N], Y_{i,t}=0, \sum_{i}1=(1-\alpha_2)N\}
    \end{equation}
    such that $\sum_{i\in {\cal T}^t}i$ is minimized.
    \item For all agents $i\in{\cal T}^t$, define ${\cal T}^t_i$ as the set of $(1-\alpha_1)m$ smallest time indices $\tau\in[t-m+1,t]$ for which $W_{i,\tau}=0$, i.e.,
    \begin{equation}
        {\cal T}^t_i=\{\tau|\tau\in[t{-}m{+}1,t], W_{i,\tau}{=}0, \sum_\tau 1{=}(1{-}\alpha_1)m\}
    \end{equation}
    such that ${\sum_{\tau\in{\cal T}^t_i}}\tau$ is minimized.
\end{itemize}
Using the above sets, the following Lemma, whose proof can be found in Appendix~\ref{app:errorbound}, bounds the norm of $e_t$:
\begin{lemma}\label{lem:errorbound}
Suppose that $F(\cdot)$ and $F_{i,\tau}(\cdot)$, $\forall i,\tau$, are $L$-smooth. Define $\hat{\hat{g}}_t^{Z_t=0}$ as the robustified gradient at iteration $t$ when $Z_{t}=0$. Define the error $e_{t}$ as
$$    e_t=\frac{\nabla F(x_t)}{\|\nabla F(x_t)\|}-\frac{\hat{\hat{g}}^{Z_t=0}_t}{\|\hat{\hat{g}}^{Z_t=0}_t\|}$$
Then for all $t\geq m$, the following bounds the norm of the error:
\begin{equation}\label{eq:errorbound}
    \begin{split}
        &\|e_t\|\leq \frac{2}{\|\nabla F(x_t)\|}\Bigg(L\gamma(m{-}1)(1{+}C_{\alpha_1}{+}2C_{\alpha_2}(C_{\alpha_1}{+}1))\hspace{-.5cm}\\
        &+\left\|\nabla F(x_t){-}\frac{1}{(1{-}\alpha_1)m(1{-}\alpha_2)N}\sum_{i\in {\cal T}^t}\sum_{\tau\in {\cal T}_i^t}\nabla F_{i,\tau}(x_t)\right\|\hspace{-1cm}\\
        &+2C_{\alpha_2}\underset{i\in{\cal T}^t}{\max}\left\|\frac{1}{(1{-}\alpha_1)m}\sum_{\tau\in{\cal T}^t_i}\nabla F_{i,\tau}(x_t){-}\nabla F(x_t)\right\|_\infty\hspace{-.5cm}\\
        &+\frac{1}{(1{-}\alpha_2)N}\sum_{i\in {\cal T}^t}\underset{\tau,\tau'\in{\cal T}^t_i}{\max} C_{\alpha_1}\|\nabla F_{i,\tau}(x_{\tau'}){-}\nabla F_{i,\tau'}(x_{\tau'})\|_\infty\hspace{-1cm}\\
        &+2C_{\alpha_2}C_{\alpha_1}\underset{i\in{\cal T}^t,\tau,\tau'\in{\cal T}^t_i}{\max}\|\nabla F_{i,\tau}(x_{\tau'}){-}\nabla F_{i,\tau'}(x_{\tau'})\|_\infty
        \Bigg)\hspace{-1cm}
    \end{split}
\end{equation}
\end{lemma}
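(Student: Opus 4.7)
The plan is to first reduce the bound on the norm of the difference of unit vectors to a bound on the unnormalized gradient error, and then carefully decompose the latter through the two layers of robust aggregation. For the reduction, I would use the elementary fact that for nonzero $u, v \in \mathbb{R}^d$,
\[
\left\|\tfrac{u}{\|u\|} - \tfrac{v}{\|v\|}\right\| \leq \tfrac{2\|u-v\|}{\|u\|},
\]
which follows by writing $u\|v\| - v\|u\| = (u-v)\|v\| + v(\|v\| - \|u\|)$ and applying the triangle inequality. Applying this with $u = \nabla F(x_t)$ and $v = \hat{\hat{g}}_t^{Z_t=0}$ produces the $2/\|\nabla F(x_t)\|$ prefactor in \eqref{eq:errorbound} and reduces the task to bounding $\|\nabla F(x_t) - \hat{\hat{g}}_t^{Z_t=0}\|$.

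For the decomposition, I would introduce two intermediate ``clean'' averages: the per-agent mean of trustworthy past gradients $\bar{g}_{i,t} = \frac{1}{(1-\alpha_1)m}\sum_{\tau \in \mathcal{T}^t_i} \nabla F_{i,\tau}(x_\tau)$, the cross-agent mean of robustified gradients $\bar{g}_t^{\mathrm{agg}} = \frac{1}{(1-\alpha_2)N}\sum_{i \in \mathcal{T}^t} \hat{g}_{i,t}$, and $\tilde{g}_t = \frac{1}{(1-\alpha_2)N}\sum_{i \in \mathcal{T}^t}\bar{g}_{i,t}$. Because $Z_t = 0$ forces at least $(1-\alpha_2)N$ agents to have $Y_{i,t} = 0$, Proposition~\ref{prop:median} applies both to the outer aggregation (yielding $\|\bar{g}_t^{\mathrm{agg}} - \hat{\hat{g}}_t^{Z_t=0}\| \leq C_{\alpha_2}\, r^{\mathrm{agg}}$ with $r^{\mathrm{agg}} \geq \max_{i \in \mathcal{T}^t}\|\hat{g}_{i,t} - \bar{g}_t^{\mathrm{agg}}\|_\infty$) and, for each $i \in \mathcal{T}^t$, to the inner temporal averaging (yielding $\|\hat{g}_{i,t} - \bar{g}_{i,t}\| \leq C_{\alpha_1} r_i$ with $r_i \geq \max_{\tau \in \mathcal{T}^t_i}\|\nabla F_{i,\tau}(x_\tau) - \bar{g}_{i,t}\|_\infty$). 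A triangle inequality then splits $\nabla F(x_t) - \hat{\hat{g}}_t^{Z_t=0}$ along the chain $\nabla F(x_t) \to \tilde{g}_t \to \bar{g}_t^{\mathrm{agg}} \to \hat{\hat{g}}_t^{Z_t=0}$, with each link contributing one type of term to the final bound.

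It remains to bound the infinity-norm spreads invoked in Proposition~\ref{prop:median} as well as $\|\nabla F(x_t) - \tilde{g}_t\|$. For each gradient $\nabla F_{i,\tau}(x_\tau)$ with $\tau \in [t-m+1, t]$, I would replace $x_\tau$ by a common reference point --- namely $x_t$ when comparing along the outer chain and $x_{\tau'}$ when comparing inside an inner spread --- via $L$-smoothness of $F_{i,\tau}$. Since Algorithm~\ref{alg:distriutedavgnormalizedgd} uses normalized updates with step size $\gamma$, one has $\|x_\tau - x_{\tau'}\| \leq \gamma|\tau - \tau'| \leq \gamma(m-1)$, yielding the $L\gamma(m-1)$ factors. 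The residual $\nabla F_{i,\tau}(x_{\tau'}) - \nabla F_{i,\tau'}(x_{\tau'})$ produced by this common-reference trick is exactly the SA-variance term in the last two lines of \eqref{eq:errorbound}, and it vanishes identically in the SAA setting since there $F_{i,\tau} \equiv F_{i,\tau'}$.

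The main obstacle is the bookkeeping of coefficients: each $L\gamma(m-1)$ contribution passes through a different subset of the aggregation operators (none, inner only, outer only, or both), producing the composite coefficient $1 + C_{\alpha_1} + 2C_{\alpha_2}(C_{\alpha_1} + 1)$, and similarly each occurrence of $\|\nabla F_{i,\tau}(x_{\tau'}) - \nabla F_{i,\tau'}(x_{\tau'})\|_\infty$ must be routed through the correct combination of $C_{\alpha_1}$, $C_{\alpha_2}$, and triangle-inequality factors of two (the latter arising whenever one passes from $\|\cdot - \bar{v}_{\mathcal{H}}\|_\infty$ to $\|\cdot - \nabla F(x_t)\|_\infty$ inside a spread). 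Assembling these contributions cleanly, without overcounting and while ensuring that each infinity-norm spread used in Proposition~\ref{prop:median} validly upper bounds the true spread, is the technical crux of the argument.
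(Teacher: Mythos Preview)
Your proposal is correct and follows essentially the same route as the paper's proof: the same $2\|u-v\|/\|u\|$ reduction, the same three-term decomposition along the chain $\nabla F(x_t)\to\tilde g_t\to\bar g_t^{\mathrm{agg}}\to\hat{\hat g}_t^{Z_t=0}$ (the paper writes these as $\bar{\bar g}_t$ and $\bar{\hat g}_t$), and the same use of Proposition~\ref{prop:median} at both layers together with $L$-smoothness and the $\gamma(m-1)$ displacement bound from normalized updates. Your description of the coefficient bookkeeping, including the factors of two arising when a spread is recentered, matches the paper's computation exactly.
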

We plug \eqref{eq:errorbound} into \eqref{eq:thmstrongcvxcheckpt1} and take expectation of both sides with respect to $z\sim {\cal D}$ noting that $\nabla F_{i,\tau}(x_t)=\nabla F_{i,\tau'}(x_t)$, $\forall t,\tau,\tau'$ for the SAA, and use $\mu$-strong convexity of $F(\cdot)$:
\begin{align}
    \begin{split}
        &\underset{z\sim{\cal D}}{\mathbb{E}}[\|x_{t+1}-x^\star\|^2]\leq \underset{z\sim{\cal D}}{\mathbb{E}}[\|x_t-x^\star\|^2]+\gamma^2\\
        &-\frac{2\gamma}{\kappa}(1-Z_t(1+\kappa))\underset{z\sim{\cal D}}{\mathbb{E}}[\|x_t-x^\star\|]\\
        &+\frac{4\gamma^2L(m-1)(1+C_{\alpha_1}+2C_{\alpha_2}(C_{\alpha_1}+1))}{\mu}\\
        &+\frac{4\gamma\sigma}{\mu\sqrt{(1-\alpha_2)Nb}}\\
        &+\frac{8\gamma C_{\alpha_2}}{\mu}\underset{z\sim{\cal D}}{\mathbb{E}}[\underset{i\in{\cal T}^t}{\max}\|\nabla F_{i,t}(x_t)-\nabla F(x_t)\|_{\infty}]
    \end{split}\\
    \begin{split}\label{eq:thmstrongcvxcheckpt2}
        &\leq \underset{z\sim{\cal D}}{\mathbb{E}}[\|x_t-x^\star\|^2]+\gamma^2\\
        &-\frac{2\gamma}{\kappa}(1-Z_t(1+\kappa))\underset{z\sim{\cal D}}{\mathbb{E}}[\|x_t-x^\star\|]\\
        &+\frac{4\gamma^2L(m-1)(1+C_{\alpha_1}+2C_{\alpha_2}(C_{\alpha_1}+1))}{\mu}\\
        &+\frac{4\gamma\sigma}{\mu\sqrt{(1-\alpha_2)Nb}}\\
        &+\frac{8\gamma C_{\alpha_2}}{\mu}\underset{\lambda\in(0,b/a)}{\inf} \left[\frac{\log{(2(1-\alpha_2)Nd)}+b\phi(\lambda/b)}{
    \lambda}\right],\hspace{-2cm}
    \end{split}
\end{align}
where the second inequality uses Assumption~\ref{ass:subgamma} and Theorem~\ref{thm:expectedmaximum} with
\begin{align}\label{eq:35}
&\underset{z\sim{\cal D}}{\mathbb{E}}[\underset{i\in{\cal T}^t}{\max}\|\nabla F_{i,t}(x_t)-\nabla F(x_t)\|_{\infty}]
\nonumber\\
&= \underset{z\sim{\cal D}}{\mathbb{E}}[\underset{i\in{\cal T}^t,k\in[d]}{\max}|[\nabla F_{i,t}(x_t)-\nabla F(x_t)]_k|\\
&=\underset{z\sim{\cal D}}{\mathbb{E}}[\underset{i\in{\cal T}^t,k\in[d]}{\max}\big\{[\nabla F_{i,t}(x_t)-\nabla F(x_t)]_k,\nonumber\\
&\hspace{2.75cm}[\nabla F(x_t)-\nabla F_{i,t}(x_t)]_k\big\}
\end{align}
and
\begin{equation}
    \underset{z\sim D}{\mathbb{E}}[e^{\lambda [\nabla F_{i,t}(x_t)-\nabla F(x_t)]_k}]\leq b\phi(\lambda/b), 
\end{equation}
$\forall x_t\in{\cal X},k\in[d],|\lambda|\leq b/a,$ with $\phi(\lambda)=\frac{\lambda^2\sigma^2}{2(1-a|\lambda|)}$.
Note that the maximization is taken over $2|{\cal T}^t|d=2(1-\alpha_2)Nd$ sub-gamma random variables. The $\underset{\lambda\in(0,b/a)}{\inf}[\cdot]$ term attains its minimum at
\begin{equation}
    \lambda^\star=\frac{2\sqrt{\log{(2(1-\alpha_2)Nd)}}}{2a\sqrt{\log{(2(1-\alpha_2)Nd)}}/b+\sqrt{2\sigma^2/b}}
\end{equation}
and the expression evaluated at $\lambda^\star$ becomes
\begin{equation}\label{eq:39}
\begin{split}
        &\left[\frac{\log{(2(1-\alpha_2)Nd)}+b\phi(\lambda/b)}{
    \lambda}\right]_{\lambda=\lambda^\star}\\
    &=\frac{a}{b}\log{(2(1-\alpha_2)Nd)}+\frac{\sigma\sqrt{2\log{(2(1-\alpha_2)Nd)}}}{\sqrt{b}}
\end{split}
\end{equation}
The next step is to take expectation of \eqref{eq:thmstrongcvxcheckpt2} with respect to all randomness, where the challenge is to compute $\mathbb{E}[Z_t\|x_t-x^\star\|]$. However, $Z_t$ is a random variable that depends on $\{Y_{i,t}\}_{i\in[N]}$, and $Y_{i,t}$ is a random variable which depends on $\{W_{i,\tau}\}_{\tau\in[t-m+1,t]}$. All in all, $Z_t$ is a random variable which depends on the agents' state at time $t-m+1$ (which also not independent of history). Since $x_t$ is also dependent on the agents' state at time $t-m+1$, $Z_t$ and $x_t$ are dependent random variables. Therefore, we can not directly compute $\mathbb{E}[Z_t\|x_t-x^\star\|]$. But considering the fact that the two-state Markov chain governing the agents' states converges exponentially fast to its stationary distribution, the idea is to use total law of expectation by conditioning on the state at time $t-m+1-m_0$ for some $m_0\geq 0$, i.e.,
\begin{equation}\label{eq:trickstrongcvx}
    \mathbb{E}[Z_t\|x_t-x^\star\|]=\mathbb{E}[\mathbb{E}[Z_t\|x_t-x^\star\||{\cal S}_{t-m+1-m_0}]],
\end{equation}
where ${\cal S}_{t-m+1-m_0}=\{x_{t-m+1-m_0},\{\pi_{t-m+1-m_0}^i\}_{i\in[N]}\}$ and $\pi_{t-m+1-m_0}^i$ is the distribution of the state of agent $i$ at time $t-m+1-m_0$. Note that due to normalized updates:
\begin{equation}
    \|x_t-x^\star\|\leq \|x_{t-m+1-m_0}-x^\star\|+\gamma(m-1+m_0),
\end{equation}
and therefore \eqref{eq:trickstrongcvx} can be rewritten as
\begin{equation}
\begin{split}
        &\mathbb{E}[\mathbb{E}[Z_t\|x_t-x^\star\||{\cal S}_{t-m+1-m_0}]]\\
        &\leq
        \mathbb{E}[\| x_{t-m+1-m_0}-x^\star\|\mathbb{E}[Z_t|{\cal S}_{t-m+1-m_0}]]\\
        &\hspace{.5cm}+\mathbb{E}[\gamma(m-1+m_0)\mathbb{E}[Z_t|{\cal S}_{t-m+1-m_0}]].\label{eq:thmstrongcvxcheckpt3}
\end{split}
\end{equation}
We now use Lemma~\ref{lem:practicalbound} (or Lemma~\ref{lem:markovianprobbound} in Appendix~\ref{app:tighterbound}) to establish uniform bounds on $\mathbb{E}[Z_t|{\cal S}_{t-m+1-m_0}]$:
\begin{align}
\begin{split}
        \mathbb{E}[\|x_t-x^\star\|Z_t]&\leq \mathbb{E}[\|x_{t-m+1-m_0}-x^\star\|]P_Z^m(m_0)\\
        &\hspace{1cm}+\gamma(m-1+m_0)P_Z^m(m_0)
\end{split}\\
&\hspace{-2cm}\leq P_Z^m(m_0)\left(\mathbb{E}[\|x_t-x^\star\|]+2\gamma(m-1+m_0)\right),\label{eq:44}
\end{align}
where the last inequality follows from the normalized updates. Now we take expectation of \eqref{eq:thmstrongcvxcheckpt2} with respect to all randomness and use \eqref{eq:44}:
\begin{align}
    \begin{split}\label{eq:thmstrongcvxcheckpt4}
        \mathbb{E}&[\|x_{t+1}-x^\star\|^2]\leq \mathbb{E}[\|x_t-x^\star\|^2]+\gamma^2\\
        &-\frac{2\gamma}{\kappa}(1-P_Z^m(m_0)(1+\kappa))\mathbb{E}[\|x_t-x^\star\|]\\
        &+{4\gamma^2P_Z^m(m_0)(1+1/\kappa)(m-1+m_0)}\\
        &+{4\gamma^2\kappa(m-1)(1+C_{\alpha_1}+2C_{\alpha_2}(C_{\alpha_1}+1))}\\
        &+\frac{4\gamma \sigma}{\mu\sqrt{(1-\alpha_2)Nb}}+\frac{8\gamma C_{\alpha_2}}{\mu}\frac{a}{b}\log{(2(1-\alpha_2)Nd)}\\
        &+\frac{8\gamma C_{\alpha_2}}{\mu}\frac{\sigma\sqrt{2\log{(2(1-\alpha_2)Nd)}}}{\sqrt{b}}.
    \end{split}
\end{align}

Since $P_Z^m(m_0)<1/(1+\kappa)$, the coefficient of $\mathbb{E}[\|x_t-x^\star\|]$ term is negative. Therefore, to upper bound the inequality, we lower bound $\mathbb{E}\left[\|x_t-x^\star\|\right]$ as:
\begin{equation}
\begin{split}
    \mathbb{E}\left[\|x_t-x^\star\|\right]&=\mathbb{E}\left[\frac{\|x_t-x^\star\|^2}{\|x_t-x^\star\|}\right]\geq \frac{\mathbb{E}\left[\|x_t-x^\star\|^2\right]}{R}.
\end{split}
\end{equation}
Using above, we rewrite \eqref{eq:thmstrongcvxcheckpt4} for all $m_0\in {\mathbb N}_0$ such that $P_Z^m(m_0)<1/(1+\kappa)$:
\begin{align}\label{eq:thmstrongcvxcheckpt5}
    \begin{split}
        &\mathbb{E}[\|x_{t+1}-x^\star\|^2]\\
        &\leq \mathbb{E}[\|x_t-x^\star\|^2]\left(1-\frac{2\gamma}{\kappa R}(1-P_Z^m(m_0)(1+\kappa))\right)\\
        &+\gamma^2\overline{C}(m_0) +\frac{4\gamma \sigma}{\mu\sqrt{(1-\alpha_2)Nb}}\\
        &+\frac{8\gamma C_{\alpha_2}}{\mu}\frac{a}{b}\log{(2(1-\alpha_2)Nd)}\\
        &+\frac{8\gamma C_{\alpha_2}}{\mu}\frac{\sigma\sqrt{2\log{(2(1-\alpha_2)Nd)}}}{\sqrt{b}},
    \end{split}
\end{align}
where
\begin{equation}
\begin{split}
    \overline{C}(m_0)=&1+4P_Z^m(m_0)(1+1/\kappa)(m-1+m_0)\\
    &+4\kappa(m-1)(1+C_{\alpha_1}+2C_{\alpha_2}(C_{\alpha_1}+1)),
\end{split}
\end{equation}
\begin{equation}
    c_0(m_0)=\frac{2}{\kappa R}\left(1-P_Z^m(m_0)\left(1+\kappa\right)\right).
\end{equation}
Note that \eqref{eq:thmstrongcvxcheckpt5} holds for all $t\geq m+m_0$. Hence, we can write:
\begin{align}
    \begin{split}
        &\mathbb{E}\left[\|x_{T+m+m_0}{-}x^\star\|^2\right]\leq \mathbb{E}\left[\|x_{m+m_0}{-}x^\star\|^2\right](1{-}c_0(m_0)\gamma)^T\hspace{-1cm}\\
        &+\frac{4\gamma \sigma}{\mu\sqrt{(1-\alpha_2)Nb}}\sum_{t=m+m_0}^{T+m+m_0-1}\prod_{i=t+1}^{T+m+m_0-1}(1{-}c_0(m_0)\gamma)\hspace{-1cm}\\
        &+\frac{8\gamma C_{\alpha_2}}{\mu}\sum_{t=m+m_0}^{T+m+m_0-1}\prod_{i=t+1}^{T+m+m_0-1}(1-c_0(m_0)\gamma)\times\\
        &\Bigg(\frac{a}{b}\log{(2(1-\alpha_2)Nd)}+\frac{\sigma\sqrt{2\log{(2(1-\alpha_2)Nd)}}}{\sqrt{b}}\Bigg)\\
       &+\overline{C}(m_0)\gamma^2\sum_{t=m+m_0}^{T+m+m_0-1}\prod_{i=t+1}^{T+m+m_0-1}(1-c_0(m_0)\gamma)
    \end{split}\\
    \begin{split}
&=\mathbb{E}\left[\|x_{m+m_0}-x^\star\|^2\right](1-c_0(m_0)\gamma)^T\\
        &+\frac{4 \sigma}{\mu\sqrt{(1-\alpha_2)Nb}}\frac{(1-(1-c_0(m_0)\gamma)^T)}{c_0(m_0)}\\
         &+\frac{8 C_{\alpha_2}}{\mu}\frac{a}{b}\log{(2(1-\alpha_2)Nd)}\frac{(1-(1-c_0(m_0)\gamma)^T)}{c_0(m_0)}\\
        &+\frac{8 C_{\alpha_2}}{\mu}\frac{\sigma\sqrt{2\log{(2(1-\alpha_2)Nd)}}}{\sqrt{b}}\frac{(1-(1-c_0(m_0)\gamma)^T)}{c_0(m_0)}\hspace{-1cm}\\
       &+\overline{C}(m_0)\gamma\frac{(1-(1-c_0(m_0)\gamma)^T)}{c_0(m_0)}.
    \end{split}
\end{align}
Next, we observe that $(1-(1-c_0(m_0)\gamma)^T)\leq 1$ and $\|x_{m+m_0}-x^\star\|\leq (\|x_1-x^\star\|+(m+m_0-1)\gamma)^2$ to get the desired result:
\begin{equation}
    \begin{split}
        &\mathbb{E}[\|x_{T+m+m_0}-x^\star\|^2]\leq\\
        &\left(\|x_{1}-x^\star\|+\gamma(m+m_0-1)\right)^2(1-c_0(m_0)\gamma)^T\\
        &+\frac{4\sigma}{\mu\sqrt{(1-\alpha_2)Nb}c_0(m_0)}+\frac{\overline{C}(m_0)\gamma}{c_0(m_0)}\\
        &+\frac{8a C_{\alpha_2}\log{(2(1{-}\alpha_2)Nd)}}{\mu c_0(m_0) b}{+}\frac{8 C_{\alpha_2}\sigma\sqrt{2\log{(2(1{-}\alpha_2)Nd)}}}{\mu c_0(m_0)\sqrt{b}}.\hspace{-1cm}
    \end{split}
\end{equation}

\subsection{Proof of Lemma~\ref{lem:errorbound}}\label{app:errorbound}
\begin{align}
    \|e_t\|&=\left\|\frac{\nabla F(x_t)}{\|\nabla F(x_t)\|}-\frac{\hat{\hat{g}}^{Z_t=0}_t}{\|\hat{\hat{g}}^{Z_t=0}_t\|}\right\|\\
    &=\left\|\frac{\nabla F(x_t)\|\hat{\hat{g}}^{Z_t=0}_t\|-\hat{\hat{g}}^{Z_t=0}_t\|\nabla F(x_t)\|}{\|\nabla F(x_t)\|\|\hat{\hat{g}}^{Z_t=0}_t\|}\right\|\\
    \begin{split}
        &=\Bigg\|\frac{\|\hat{\hat{g}}^{Z_t=0}_t\|(\nabla F(x_t)-\hat{\hat{g}}^{Z_t=0}_t)}{\|\nabla F(x_t)\|\|\hat{\hat{g}}^{Z_t=0}_t\|}\\
        &\hspace{1cm}+\frac{\hat{\hat{g}}^{Z_t=0}_t(\|\hat{\hat{g}}^{Z_t=0}_t\|-\|\nabla F(x_t)\|)}{\|\nabla F(x_t)\|\|\hat{\hat{g}}^{Z_t=0}_t\|}\Bigg\|
    \end{split}\\
    &\leq 2\frac{\|\nabla\label{eq:errorbound1} F(x_t)-\hat{\hat{g}}^{Z_t=0}_t\|}{\|\nabla F(x_t)\|}
\end{align}

Using the sets ${\cal T}^t$ and $\{{\cal T}_i^t\}_{i\in{\cal T}^t}$, we define:
\begin{align}
    \Bar{\Bar{g}}_t&=\frac{1}{(1-\alpha_1)m(1-\alpha_2)N}\sum_{i\in {\cal T}^t}\sum_{\tau\in {\cal T}_i^t}\nabla F_{i,\tau}(x_\tau),\\
    \Bar{\hat{g}}_t&=\frac{1}{(1-\alpha_2)N}\sum_{i\in{\cal T}^t}\hat{g}_{i,t}.
\end{align}
Here, $\Bar{\Bar{g}}$ is the true mean of $(1-\alpha_1)m(1-\alpha_2)N$ trustworthy gradients with time indices $\tau\in{\cal T}^i_t$ from agents $i\in{\cal T}^t$, and $\Bar{\hat{g}}_t$ is the mean of the robustified gradients of agents $i\in{\cal T}^t$.

We split the numerator of \eqref{eq:errorbound1} as follows:
\begin{equation}
    \|\nabla F(x_t)-\hat{\hat{g}}^{Z_t=0}_t\|\leq \|\nabla F(x_t)-\Bar{\Bar{g}}_t\|+\|\Bar{\Bar{g}}_t-\Bar{\hat{g}}_t\|+\|\Bar{\hat{g}}_t-\hat{\hat{g}}^{Z_t=0}_t\|.
\end{equation}
Next, we upper bound each term above using smoothness of $F$, normalized updates, and triangular inequality:
\begin{enumerate}[wide, labelindent=0pt,topsep=.5mm]
\item $\|\nabla F(x_t)-\Bar{\Bar{g}}_t\|$
\begin{align}
    &{=}\|\nabla F(x_t){-}\frac{1}{(1{-}\alpha_1)m(1{-}\alpha_2)N}\hspace{-.1cm}\sum_{i\in {\cal T}^t}\hspace{-.1cm}\sum_{\tau\in {\cal T}_i^t}\hspace{-.1cm}\nabla F_{i,\tau}(x_\tau)\|\\
    \begin{split}
        &\leq \|\nabla F(x_t){-}\frac{1}{(1{-}\alpha_1)m(1{-}\alpha_2)N}\hspace{-.1cm}\sum_{i\in {\cal T}^t}\hspace{-.1cm}\sum_{\tau\in {\cal T}_i^t}\hspace{-.1cm}\nabla F_{i,\tau}(x_t)\|\hspace{-1cm}\\
        &+\|\frac{1}{(1{-}\alpha_1)m(1{-}\alpha_2)N}\hspace{-.1cm}\sum_{i\in {\cal T}^t}\hspace{-.1cm}\sum_{\tau\in {\cal T}_i^t}\hspace{-.1cm}\nabla F_{i,\tau}(x_t){-}\nabla F_{i,\tau}(x_\tau)\|\hspace{-1cm}
    \end{split}\\
       &\nonumber\leq \|\nabla F(x_t)-\frac{1}{(1-\alpha_1)m(1-\alpha_2)N}\sum_{i\in {\cal T}^t}\sum_{\tau\in {\cal T}_i^t}\nabla F_{i,\tau}(x_t)\|\\
        &+\frac{L}{(1-\alpha_1)m(1-\alpha_2)N}\sum_{i\in {\cal T}^t}\sum_{\tau\in {\cal T}_i^t}\|x_t-x_\tau\|\\
        &\nonumber\leq  \|\nabla F(x_t)-\frac{1}{(1-\alpha_1)m(1-\alpha_2)N}\sum_{i\in {\cal T}^t}\sum_{\tau\in {\cal T}_i^t}\nabla F_{i,\tau}(x_t)\|\\
        &+L\gamma(m-1).
\end{align}
\item $\|\Bar{\Bar{g}}_t-\Bar{\hat{g}}_t\| =\|\frac{1}{(1-\alpha_1)N}\sum_{i\in{\cal T}^t}\Bar{g}_{i,t}-\hat{g}_{i,t}\|$
\begin{align}
    &\leq \frac{1}{(1-\alpha_2)N}\sum_{i\in{\cal T}^t}\|\Bar{g}_{i,t}-\hat{g}_{i,t}\|\\
    \begin{split}\label{eq:122}
     &\leq \frac{1}{(1-\alpha_2)N}\sum_{i\in{\cal T}^t}\underset{\tau \in {\cal T}_i^t}{\max}C_{\alpha_1}\|\nabla F_{i,\tau}(x_\tau)\\&
     \hspace{2.5cm}-\frac{1}{(1-\alpha_1)m}\sum_{\tau'\in {\cal T}_i^t}\nabla F_{i,\tau'}(x_{\tau'})\|_\infty
    \end{split}\\
    &{\leq} \frac{1}{(1{-}\alpha_2)N}\hspace{-.15cm}\sum_{i\in {\cal T}^t}\hspace{-.1cm}\underset{\tau,\tau'\in{\cal T}^t_i}{\max}\hspace{-.1cm} C_{\alpha_1}\hspace{-.05cm}\|\hspace{-.05cm}\nabla F_{i,\tau}(x_\tau){-}\nabla F_{i,\tau'}(x_{\tau'})\hspace{-.05cm}\|_\infty\hspace{-.1cm}\\
    \begin{split}
        &{\leq}  \frac{1}{(1{-}\alpha_2)N}\hspace{-.1cm}\sum_{i\in {\cal T}^t}\hspace{-.1cm}\underset{\tau,\tau'\in{\cal T}^t_i}{\max}\hspace{-.1cm} C_{\alpha_1}\|\nabla F_{i,\tau}(x_\tau){-}\nabla F_{i,\tau}(x_{\tau'})\|_\infty\hspace{-1cm}\\
        &+\frac{1}{(1{-}\alpha_2)N}\hspace{-.1cm}\sum_{i\in {\cal T}^t}\hspace{-.1cm}\underset{\tau,\tau'\in{\cal T}^t_i}{\max}\hspace{-.1cm} C_{\alpha_1}\|\nabla F_{i,\tau}(x_{\tau'}){-}\nabla F_{i,\tau'}(x_{\tau'})\|_\infty\hspace{-1cm}
    \end{split}\\
    \begin{split}\label{eq:126}
    &\leq C_{\alpha_1}L\gamma(m-1)\\
    &{+}\frac{1}{(1{-}\alpha_2)N}\hspace{-.1cm}\sum_{i\in {\cal T}^t}\hspace{-.1cm}\underset{\tau,\tau'\in{\cal T}^t_i}{\max}\hspace{-.1cm} C_{\alpha_1}\|\nabla F_{i,\tau}(x_{\tau'}){-}\nabla F_{i,\tau'}(x_{\tau'})\|_\infty.\hspace{-1cm}
    \end{split}
\end{align}
\item $\|\Bar{\hat{g}}_t-\hat{\hat{g}}^{Z_t=0}_t\|\leq C_{\alpha_2}\underset{i\in{\cal T}^t}{\max}\|\hat{g}_{i,t}-\frac{1}{(1-\alpha_2)N}\sum_{j\in{\cal T}^t}\hat{g}_{j,t}\|_\infty$
\begin{align}
    &\leq C_{\alpha_2}\underset{i,j\in{\cal T}^t}{\max}\|\hat{g}_{i,t}-\hat{g}_{j,t}\|_\infty\\
    &{\leq} C_{\alpha_2}\underset{i,j\in{\cal T}^t}{\max}(\|\hat{g}_{i,t}{-}\Bar{g}_{i,t}\|_\infty{+}\|\hat{g}_{j,t}{-}\Bar{g}_{j,t}\|_\infty{+}\|\Bar{g}_{i,t}{-}\Bar{g}_{j,t}\|_\infty)\\
    &\leq 2C_{\alpha_2}\underset{i\in{\cal T}^t}{\max}\|\hat{g}_{i,t}{-}\Bar{g}_{i,t}\|{+}2C_{\alpha_2}\underset{i\in{\cal T}^t}{\max}\|\Bar{g}_{i,t}{-}\nabla F(x_t)\|_\infty\\
        &\nonumber\leq 2C_{\alpha_2}\underset{i\in{\cal T}^t}{\max}\|\hat{g}_{i,t}-\Bar{g}_{i,t}\|\\
        &\nonumber+2C_{\alpha_2}\underset{i\in{\cal T}^t}{\max}\|\frac{1}{(1-\alpha_1)m}\sum_{\tau\in{\cal T}_i^t}\nabla F_{i,\tau}(x_\tau)-\nabla F_{i,\tau}(x_t)\|_\infty\\
        &+2C_{\alpha_2}\underset{i\in{\cal T}^t}{\max}\|\frac{1}{(1{-}\alpha_1)m}\sum_{\tau\in{\cal T}^t_i}\nabla F_{i,\tau}(x_t){-}\nabla F(x_t)\|_\infty\\
    \begin{split}
      & \leq 2C_{\alpha_2}\underset{i\in{\cal T}^t}{\max}\|\hat{g}_{i,t}-\Bar{g}_{i,t}\|+2C_{\alpha_2}L\gamma(m-1)\\
        &{+}2C_{\alpha_2}\underset{i\in{\cal T}^t}{\max}\|\frac{1}{(1{-}\alpha_1)m}\sum_{\tau\in{\cal T}^t_i}\nabla F_{i,\tau}(x_t){-}\nabla F(x_t)\|_\infty
    \end{split}\\
        &\nonumber\leq 2(C_{\alpha_1}+1)C_{\alpha_2}L\gamma(m-1)\\
        &\nonumber{+}2C_{\alpha_2}C_{\alpha_1}\underset{i\in{\cal T}^t,\tau,\tau'\in{\cal T}^t_i}{\max}\|\nabla F_{i,\tau}(x_{\tau'})-\nabla F_{i,\tau'}(x_{\tau'})\|_\infty\\
        &{+}2C_{\alpha_2}\underset{i\in{\cal T}^t}{\max}\|\frac{1}{(1{-}\alpha_1)m}\sum_{\tau\in{\cal T}^t_i}\nabla F_{i,\tau}(x_t){-}\nabla F(x_t)\|_\infty,
\end{align}
where the last inequality follows from \eqref{eq:122}-\eqref{eq:126}. Gathering all three terms, we get the desired result.
\end{enumerate}


\bt{
\subsection{Proof of Lemma~\ref{lem:practicalbound}}\label{app:practicalbound}
The goal is to find a uniform bound on $\mathbb{E}[Z_t|{\cal S}_{t-m+1-m_0}]$ independent of the system state at $t-m+1-m_0$. Note that
\begin{align}
    \mathbb{E}&[Z_t|{\cal S}_{t-m+1-m_0}]={\mathbb P}(Z_t=1|{\cal S}_{t-m+1-m_0})\\
    &={\mathbb P}\left(\textstyle\frac{1}{N}\sum_{i\in[N]}Y_{i,t}>\alpha_2|{\cal S}_{t-m+1-m_0}\right)\\
    &\hspace{-.5cm}\leq\sum_{k=\alpha_2 N+1}^N \binom{N}{k}(P_Y^m(m_0))^k(1-P_Y^m(m_0))^{(N-k)},
\end{align}
where
\begin{align}
        &P_Y^m(m_0){=}P_Y^m(m_0,m,\alpha_1,M){=}\underset{i\in[N],t}{\max}\mathbb{E}[Y_{i,t}|{\cal S}_{t{-}m{+}1{-}m_0}]\\
        &=\underset{i\in[N],t}{\max}\mathbb{P}[Y_{i,t}=1|{\cal S}_{t-m+1-m_0}]\\
        &=\underset{i\in[N],t}{\max}\mathbb{P}\left( \frac{1}{m}\sum_{\tau=t-m+1}^tW_{i,\tau}>\alpha_1|{\cal S}_{t-m+1-m_0}\right)\label{eq:PZcheckpoint1}
\end{align}
We can rewrite \eqref{eq:PZcheckpoint1} using total law of probability:
\begin{align}
      \nonumber P_Y^m(m_0){=}\underset{i\in[N]}{\max}\sum_{s=0}^1&\mathbb{P}\left( \frac{1}{m}\sum_{\tau=t-m+1}^t
       \hspace{-.4cm}W_{i,\tau}{>}\alpha_1|W_{i,t-m+1}=s\right)\\
       \label{eq:PY}&\times\mathbb{P}_{\pi^i_{t-m+1-m_0}}(W_{i,t-m+1}{=}s),
\end{align}
where $\mathbb{P}_{\pi^i_{t-m+1-m_0}}(W_{i,t-m+1}=s)$ is the probability that $W_{i,t-m+1}=s$ given the distribution at time $t-m+1-m_0$. Let
\begin{equation}
    P_s(m,\alpha_1m){=}\mathbb{P}\left(\sum_{\tau=t-m+1}^t\hspace{-.4cm}W_{i,\tau}{>}\alpha_1 m|W_{i,t-m+1}{=}s\right)
\end{equation}
Accordingly, we know that:
\begin{equation}
    P_0(m,\alpha_1m){=}p_bP_1(m{-}1,\alpha_1m){+}(1{-}p_b)P_0(m{-}1,\alpha_1m)
\end{equation}
Similarly,
\begin{align}
\begin{split}
    &P_1(m,\alpha_1m)=(1-p_t)P_1(m-1,\alpha_1m-1)\\
    &\hspace{2.5cm}+p_tP_0(m-1,\alpha_1m-1)
\end{split}\\
    &\geq (1-p_t)P_1(m-1,\alpha_1m)+p_tP_0(m-1,\alpha_1m)\\
    \begin{split}
    &=P_0(m,\alpha_1m)\\
    &{+}(1{-}p_t{-}p_b)(P_1(m{-}1,\alpha_1m){-}P_0(m{-}1,\alpha_1m))   
    \end{split}
\end{align}
Since $(1-p_t-p_b)\geq 0$, $P_1(m,\alpha_1m)\geq P_0(m,\alpha_1m)$ if $P_1(m-1,\alpha_1m)\geq P_0(m-1,\alpha_1m)$. We know that
\begin{equation}
    0=P_0(\alpha_1m+1,\alpha_1m)\leq P_1(\alpha_1m+1,\alpha_1m)
\end{equation}
because when $W_{i,t-m+1}=0$, then there can be at most $\alpha_1m$ instances where $W_{i,\tau}=1$ for $\tau\in[t-m+1,t-m+1+\alpha_1m]$. Therefore, the probability of having the sum strictly larger than $\alpha_1 m$ is zero. This establishes that $P_1(m,\alpha_1m)\geq P_0(m,\alpha_1m)$. We also have a closed form for
\begin{align}
\begin{split}
      &\mathbb{P}_{\pi^i_{t-m+1-m_0}}(W_{i,t-m+1}=1)=\\
      &+\pi^i_{t-m+1-m_0}(0)\frac{p_b-p_b(1-p_b-p_t)^{m_0}}{p_b+p_t}\\
      &+\pi^i_{t-m+1-m_0}(1)\frac{p_b+p_t(1-p_b-p_t)^{m_0}}{p_b+p_t}  
\end{split}\\
&\leq \frac{p_b+p_t(1-p_b-p_t)^{m_0}}{p_b+p_t},
\end{align}
where the inequality holds with equality if $\pi_{t-m+1-m_0}^i(1)=1$, i.e., at time $t-m+1-m_0$ the agent was corrupted, which is the worst-case intuition. At this point, we have:
\begin{align}
    P_Y^m(m_0)&{=}\underset{i\in[N]}{\max}\sum_{s=0}^1 P_s(m,\alpha_1m)\mathbb{P}_{\pi^i_{t{-}m{+}1{-}m_0}}(W_{i,t{-}m+{1}}{=}s)\\
    &\leq\sum_{s=0}^1 P_s(m,\alpha_1m) \Pi_{m_0}(i),\label{eq:intialdistcomparison}
\end{align}
where
\begin{align}
    &\Pi_{m_0}(0)=\frac{p_t-p_t(1-p_b-p_t)^{m_0}}{p_b+p_t}\\
    &\Pi_{m_0}(1)=\frac{p_b+p_t(1-p_b-p_t)^{m_0}}{p_b+p_t}.
\end{align}
We have established that the initial distribution given above maximizes $P_Y^m(m_0)$. Next, we argue that $P_Y^m(m_0)$ is larger over the Markov chain governed by $M^{m_0}$ with
\begin{equation}\label{eq:modifiedmarkovchain}
   M_{m_0}=\begin{bmatrix}
1-p_b'(m_0) & p_b'(m_0)\\
p_t'(m_0) & 1-p_t'(m_0)
\end{bmatrix},
\end{equation}
where $p_b'(m_0)=p_b+p_t(1-p_b-p_t)^{m_0}$ and $p_t'(m_0)=p_t-p_t(1-p_b-p_t)^{m_0}$. With some abuse of notation, let
\begin{equation}
    P_s^{M^m}(m,\alpha_1m){=}{P}_s^{M^m}\left(\sum_{\tau=t-m+1}^t\hspace{-.4cm}W_{i,\tau}{>}\alpha_1 m|W_{i,t-m+1}{=}s\right),
\end{equation}
where the superscript $M^m$ denotes that the random variable follows the Markov chain governed by $M$ for $m$ time steps. Using $P_1(\cdot)\geq P_0(\cdot)$, we have that
\begin{align}
\begin{split}
    &P_1^{M^m}(m,\alpha_1m){=}P_1^{M^{m{-}1}}(m{-}1,\alpha_1m{-}1)(1{-}p_t)\\
    &\hspace{2cm}+P_0^{M^{m-1}}(m-1,\alpha_1m-1)p_t
\end{split}\\
\begin{split}
    \leq&P_1^{M^{m-1}}(m-1,\alpha_1m-1)(1-p_t'(m_0))\\
    &+P_0^{M^{m-1}}(m-1,\alpha_1m-1)p_t'(m_0) 
\end{split}\\
=&P_1^{M_{m_0}M^{m-1}}(m,\alpha_1m),
\end{align}
where the superscript $M_{m_0}M^{m-1}$ indicates that the random variable follows the Markov chain governed by $M_{m_0}$ initially and $M$ for the next $m-1$ time steps. Similarly,
\begin{align}
    \begin{split}
    &P_0^{M^m}(m,\alpha_1m)=P_1^{M^{m-1}}(m-1,\alpha_1m)p_b\\
    &\hspace{2cm}+P_0^{M^{m-1}}(m-1,\alpha_1m)(1-p_b)
\end{split}\\
\begin{split}
    \leq&P_1^{M^{m-1}}(m-1,\alpha_1m-1)p_b'(m_0) \\
    &+P_0^{M^{m-1}}(m-1,\alpha_1m-1)(1-p_b'(m_0) )
\end{split}\\
=&P_0^{M_{m_0}M^{m-1}}(m,\alpha_1m).
\end{align}
Recursively applying this with the boundary conditions $P_s^{M^{\alpha_1m}}(\alpha_1m,\alpha_1m)=P_s^{M_{m_0}^{\alpha_1m}}(\alpha_1m,\alpha_1m)=0$ for $s=1,2$, we get:
\begin{equation}
    P_s^{M^m}(m,\alpha_1m)\leq P_s^{M^m_{m_0}}(m,\alpha_1m).\label{eq:newchaincomparison}
\end{equation}
Therefore, we can upper bound $P_Y^m(m_0)$ as
\begin{equation}
    P_Y^m(m_0)\leq\sum_{s=0}^1P_s^{M^m_{m_0}}(m,\alpha_1m)\Pi_{m_0}(s).
\end{equation}
The above is essentially the equal to the following:
\begin{equation}
    P_Y^m(m_0)\leq\mathbb{P}_{\Pi_{m_0}}(\frac{1}{m}\sum_{\tau=t-m+1}^t W_{i,\tau}>\alpha_1),
\end{equation}
where the subscript $\Pi_{m_0}$ denotes the initial distribution of $W_{i,\tau}$. Since the initial distribution of $W_{i,\tau}$ is equal to the stationary distribution of $M^{m_0}$, we can use the following theorem to bound $P_Y^m(m_0)$:
\begin{theorem}\cite[Theorem 1]{leon2004optimal}
For all pairs $((X_n), f )$, such that $(X_n)$ is a finite, ergodic and reversible Markov chain in stationary state with second largest eigenvalue $\lambda$ and $f$ is a function taking values in $[0, 1]$ such that $E[f (X_i)] = \mu$, the following bounds, with $\lambda_0 = max(0, \lambda)$, hold for all $\epsilon > 0$ such that $\mu + \epsilon < 1$ and all time n
\begin{equation}
    \mathbb{P}(\sum_{i=1}^nf(X_i)\geq n(\mu+\epsilon))\leq\exp{(-2\frac{1-\lambda_0}{1+\lambda_0}n\epsilon^2)}.
\end{equation}
\end{theorem}
Applying the above theorem with $\lambda=(1-p_b-p_t)$, $n=m$, $\mu+\epsilon=\alpha_1m$, $\mu=\Pi_{m_0}(1)$, and $1+\lambda_0=2-p_b-p_t\leq 2$:
\begin{equation}
    P_Y^m(m_0)\leq \exp{\left(-m(\alpha_1-\Pi_{m_0}(1))^2(p_b+p_t)\right)},
\end{equation}
gives the desired bound on $P_Y^m(m_0)$. Note that $P_Z^m(m_0)$ is equal to $1-F_B(\alpha_2N,N,P_Y^m(m_0))$, where $F_B(\alpha_2N,N,P_Y^m(m_0)$ is the cumulative distribution function of the binomial distribution with parameters $(N,P_Y^m(m_0))$ evaluated at $\alpha_2N$. $F_B(\alpha_2N,N,P_Y^m(m_0)$ is given by \cite{wadsworth1960introduction}:
\begin{equation}
    (N{-}\alpha_2N)\binom{N}{\alpha_2N}\int_{0}^{1{-}P_Y^m(m_0)}\hspace{-.4cm}t^{N{-}\alpha_2 N{-}1}(1{-}t)^{\alpha_2N}dt,
\end{equation}
which decreases with $P_Y^m(m_0)$. Therefore, $P_Z^m(m_0)=1-F_B(\alpha_2N,N,P_Y^m(m_0))$ is maximized when $P_Y^m(m_0)$ is maximized, which gives the desired bound on $P_Z^m(m_0)$.
}

\bt{
\subsection{Sensitivity Analysis of $P_Y^m(m_0)$ with respect to $p_b$ and $p_t$}\label{app:Pypbpt}
Let $u=(\alpha_1-\Pi_{m_0}^1)^2(p_b+p_t)$. By chain rule, we have
\begin{align}
    \frac{d P_Y^m(m_0)}{d p_i}&=\frac{d P_Y^m(m_0)}{d u}\times \frac{d u}{d p_i}\\
    &=-m e^{-mu} \frac{d u}{d p_i},~~i=b,t.
\end{align}
The derivative of $u$ with respect to $p_b$ is given by:
\begin{equation}
\begin{split}
   &\frac{d u}{d p_b}{=}\frac{\alpha_1{-}\Pi_{m_0}^1}{p_b{+}p_t}{\times}(p_b(\alpha_1{+}2m_0p_t(1{-}p_b{-}p_t)^{m_0{-}1}{-}1)\\
   &{+}p_t(\alpha_1{+}2m_0p_t(1{-}p_b{-}p_t)^{m_0{-}1}{+}(1{-}p_b{-}p_t)^{m_0}{-}2))
\end{split}
\end{equation}
Noting that $\alpha_1> \Pi_{m_0}^m$, $p_t\leq p_t+p_b$, and $(1-x)^{1/x}\leq e^{-1}$ for $x\in[0,1]$:
\begin{align}
    \nonumber&\frac{d u}{d p_b}{\leq} \frac{\alpha_1{-}\Pi_{m_0}^1}{p_b{+}p_t}\times(p_b(\alpha_1{+}2m_0(p_b{+}p_t)e^{{-}(m_0{-}1)(p_b{+}p_t)}{-}1)\\
    &{+}p_t(\alpha_1{+}2m_0(p_b{+}p_t)e^{{-}(m_0{-}1)(p_b{+}p_t)}+e^{{-}m_0(p_b{+}p_t)}{-}2))
\end{align}
Now, we set $m_0=k_0/(p_b+p_t)$ for some $k_0\geq 1$:
\begin{align}
    \nonumber\frac{d u}{d p_b}\leq& \frac{\alpha_1-\Pi_{m_0}^m}{p_b+p_t}\times(p_b(\alpha_1+2k_0e^{-k_0+p_b+p_t}-1)\\
    &+p_t(\alpha_1+2k_0e^{-k_0+p_b+p_t}+e^{-k_0}-2))\\
    \nonumber\leq &\frac{\alpha_1-\Pi_{m_0}^m}{p_b+p_t}\times(p_b(\alpha_1+2k_0e^{-k_0+1}-1)\\
    &+p_t(\alpha_1+2k_0e^{-k_0+1}+e^{-k_0}-2))\\
    &< 0,
\end{align}
where the last inequality holds when $k_0\geq 4$. Therefore, when $m_0\geq 4/(p_b+p_t)$, we have that $du/dp_b<0$, which implies that $dP_Y^m(m_0)/dp_b>0$. Similarly,
\begin{equation}
\begin{split}
   &\frac{d u}{d p_t}=\frac{\alpha_1-\Pi_{m_0}^1}{p_b+p_t}\times\\
   &(p_t(\alpha_1+(2m_0+p_b+p_t-1)(1-p_b-p_t)^{m_0-1})\\
   &{+}p_b(\alpha_1{+}2m_0p_t(1{-}p_b{-}p_t)^{m_0{-}1}{-}2(1{-}p_b{-}p_t)^{m_0}{+}1))\hspace{-.5cm}
\end{split}
\end{equation}
Noting that $2(1-p_b-p_t)^{m_0}\leq 2e^{-m_0(p_b+p_t)}= 2e^{-k_0}<1$ for all $k_0\geq 1$, all the summands above are positive. Therefore, we conclude that $du/dp_t>0$, which implies $dP_Y^m(m_0)/d p_t<0$.
}
\subsection{Proof of Theorem~\ref{thm:noncvx}}\label{app:noncvx}

Due to Assumption~\ref{ass:noncvxball}, the iterates generated by the algorithm stay in ${\cal X}$ without projection. Hence, we proceed to analyze the convergence of the algorithm without projection.

Since $f(\cdot,z)$ is $L$-smooth, $F(\cdot)$ is $L$-smooth:
\begin{align}
     &F(x_{t+1}){-}F(x_t){\leq}\langle\nabla F(x_t),x_{t+1}{-}x_t\rangle{+}\frac{L}{2}\|x_{t+1}{-}x_t\|^2 \nonumber \\
     &\leq-\gamma\langle\nabla F(x_t),\frac{\hat{\hat{g}}_t}{\|\hat{\hat{g}}_t\|}\rangle+\frac{L\gamma^2}{2}\\
     \begin{split}
         &=-\gamma(1-Z_t)\langle\nabla F(x_t),\frac{\hat{\hat{g}}^{Z_t=0}_t}{\|\hat{\hat{g}}^{Z_t=0}_t\|}\rangle\\
         &\hspace{1cm}-\gamma Z_t\langle\nabla F(x_t),\frac{\hat{\hat{g}}^{Z_t=1}_t}{\|\hat{\hat{g}}^{Z_t=1}_t\|}\rangle+\frac{L\gamma^2}{2}
     \end{split}\\
     \begin{split}
          &=-\gamma(1-Z_t)\langle\nabla F(x_t),\frac{\nabla F(x_t)}{\|\nabla F(x_t)\|}\rangle\\
          &+\gamma(1-Z_t)\langle\nabla F(x_t),\frac{\nabla F(x_t)}{\|\nabla F(x_t)\|}-\frac{\hat{\hat{g}}^{Z_t=0}_t}{\|\hat{\hat{g}}^{Z_t=0}_t\|}\rangle\\
          &-\gamma Z_t\langle\nabla F(x_t),\frac{\hat{\hat{g}}^{Z_t=1}_t}{\|\hat{\hat{g}}^{Z_t=1}_t\|}\rangle+\frac{L\gamma^2}{2}
     \end{split}\\
     &\leq {-}\gamma (1{-}2Z_t)\|\nabla F(x_t)\|{+}\gamma (1{-}Z_t)\|\nabla F(x_t)\|\|e_t\|{+}\frac{L\gamma^2}{2},\label{eq:thmnoncvxcheckpt1}
\end{align}
where 
\begin{equation}
    e_t=\frac{\nabla F(x_t)}{\|\nabla F(x_t)\|}-\frac{\hat{\hat{g}}^{Z_t=0}_t}{\|\hat{\hat{g}}^{Z_t=0}_t\|}.
\end{equation}
Similar to Proof of Theorem~\ref{thm:stronglycvx} we define the following sets:
\begin{itemize}[wide, labelindent=0pt,topsep=.5mm]
    \item Define ${\cal T}^t$ as the set of $(1-\alpha_2)N$ agents with smallest indices $i\in[N]$ for which $Y_{i,t}=0$, i.e., 
    \begin{equation}
        {\cal T}^t=\{i|i\in[N], Y_{i,t}=0, \sum_{i}1=(1-\alpha_2)N\}
    \end{equation}
    such that $\sum_{i\in {\cal T}^t}i$ is minimized.
    \item For all agents $i\in{\cal T}^t$, define ${\cal T}^t_i$ as the set of $(1-\alpha_1)m$ smallest time indices $\tau\in[t-m+1,t]$ for which $W_{i,\tau}=0$, i.e.,
    \begin{equation}
        {\cal T}^t_i{=}\{\tau|\tau\in[t{-}m{+}1,t], W_{i,\tau}{=}0, \sum_\tau 1{=}(1-\alpha_1)m\}
    \end{equation}
    such that ${\sum_{\tau\in{\cal T}^t_i}}\tau$ is minimized.
\end{itemize}

We now use Lemma~\ref{lem:errorbound} to bound $\|e_t\|$  and take expectation of both sides with respect to $z\sim {\cal D}$ noting that $\nabla F_{i,\tau}(x_t)=\nabla F_{i,\tau'}(x_t)$, $\forall t,\tau,\tau'$:
\begin{align}
   \nonumber&\underset{z\sim{\cal D}}{\mathbb{E}}[F(x_{t+1}){-}F(x_t)]\leq {-}\gamma(1{-}2Z_t)\underset{z\sim{\cal D}}{\mathbb{E}}[\|\nabla F(x_t)\|]{+}\frac{L\gamma^2}{2}\\
    \nonumber&+2L\gamma^2 (1{-}Z_t)(m{-}1)(1{+}C_{\alpha_1}{+}2C_{\alpha_2}(C_{\alpha_1}{+}1))\\
    \nonumber&+2\gamma(1{-}Z_t)\underset{z\sim{\cal D}}{\mathbb{E}}\left[\left\|\nabla F(x_t){-}\frac{1}{(1{-}\alpha_2)N}\sum_{i\in{\cal T}^t}\nabla F_{i,t}(x_t)\right\|\right]\\
    &+4\gamma(1-Z_t)C_{\alpha_2}\underset{z\sim{\cal D}}{\mathbb{E}}[\underset{i\in{\cal T}^t}{\max}\|\nabla F_{i,t}(x_t)-\nabla F(x_t)\|_{\infty}]\\
    \begin{split}
        \label{eq:thmnoncvxcheckpt2}&\leq -\gamma(1-2Z_t)\underset{z\sim{\cal D}}{\mathbb{E}}[\|\nabla F(x_t)\|]+\frac{L\gamma^2}{2}\\
    &+2L\gamma^2 (m-1)(1+C_{\alpha_1}+2C_{\alpha_2}(C_{\alpha_1}+1))\\
    &+2\gamma\frac{\sigma}{\sqrt{(1-\alpha_2)Nb}}\\
    &+4\gamma C_{\alpha_2}\underset{\lambda\in(0,b/a)}{\inf} \left[\frac{\log{(2(1-\alpha_2)Nd)}+b\phi(\lambda/b)}{
    \lambda}\right]
    \end{split}\\
    \begin{split}
        &\leq -\gamma(1-2Z_t)\underset{z\sim{\cal D}}{\mathbb{E}}[\|\nabla F(x_t)\|]+\frac{L\gamma^2}{2}\\
    &+2L\gamma^2 (m-1)(1+C_{\alpha_1}+2C_{\alpha_2}(C_{\alpha_1}+1))\\
    &+2\gamma\frac{\sigma}{\sqrt{(1-\alpha_2)Nb}}+4\gamma C_{\alpha_2}\frac{a}{b}\log{(2(1-\alpha_2)Nd)}\\
    &+4\gamma C_{\alpha_2}\frac{\sigma\sqrt{2\log{((2(1-\alpha_2)Nd)}}}{\sqrt{b}},
    \end{split}
\end{align}
where the last two inequalities follow from \eqref{eq:35}-\eqref{eq:39} in proof of Theorem~\ref{thm:stronglycvx}.

Next step is to take expectation with respect to all randomness, where the challenge is to compute $\mathbb{E}[Z_t\|\nabla F(x_t)\|]$. Due to the same reasoning in proof of Theorem~\ref{thm:stronglycvx}, $x_t$ and $Z_t$ are dependent. random variables. Therefore, we use a similar trick and use total law of expectation by conditioning on the state at time $t-m+1-m_0$ for some $m_0\geq 0$, i.e.,
\begin{equation}\label{eq:trick}
    \mathbb{E}[Z_t\|\nabla F(x_t)\|]=\mathbb{E}[\mathbb{E}[Z_t\|\nabla F(x_t)\||{\cal S}_{t-m+1-m_0}]],
\end{equation}
where ${\cal S}_{t-m+1-m_0}=\{x_{t-m+1-m_0},\{\pi_{t-m+1-m_0}^i\}_{i\in[N]}\}$ and $\pi_{t-m+1-m_0}^i$ is the distribution of the state of agent $i$ at time $t-m+1-m_0$. Note that due to smoothness of $F(\cdot)$ and normalized updates:
\begin{equation}
    \|\nabla F(x_t)\|\leq \|\nabla F(x_{t-m+1-m_0})\|+L\gamma(m-1+m_0),
\end{equation}
and therefore \eqref{eq:trick} can be rewritten as
\begin{equation}
\begin{split}
        &\mathbb{E}[\mathbb{E}[Z_t\|\nabla F(x_t)\||{\cal S}_{t-m+1-m_0}]]\\
        &\leq
        \mathbb{E}[\|\nabla F(x_{t-m+m_0-1})\|\mathbb{E}[Z_t|{\cal S}_{t-m+1-m_0}]]\\
        &\hspace{.5cm}+\mathbb{E}[L\gamma(m-1+m_0)\mathbb{E}[Z_t|{\cal S}_{t-m+1-m_0}]]\label{eq:thmnoncvxcheckpt3}
\end{split}
\end{equation}


We now use Lemma~\ref{lem:practicalbound} (or Lemma~\ref{lem:markovianprobbound} in Appendix~\ref{app:tighterbound}) to establish uniform bounds on $\mathbb{E}[Z_t|{\cal S}_{t-m+1-m_0}]$:
\begin{align}
\begin{split}
        \mathbb{E}[\|\nabla F(x_t)\|Z_t]&\leq \mathbb{E}[\|\nabla F(x_{t-m+m_0-1})\|]P_Z^m(m_0)\\
        &\hspace{1cm}+L\gamma(m-1+m_0)P_Z^m(m_0)
\end{split}\\
&\hspace{-2cm}\leq P_Z^m(m_0)\left(\mathbb{E}[\|\nabla F(x_t)\|]+2L\gamma(m-1+m_0)\right),\label{eq:32}
\end{align}
where the last inequality follows from smoothness of $F(\cdot)$ and normalized updates. Now we take expectation of \eqref{eq:thmnoncvxcheckpt2} with respect to all randomness and use \eqref{eq:32}:
\begin{equation}
    \begin{split}
        &\mathbb{E}[F(x_{t+1}){-}F(x_t)]{\leq} -\gamma(1{-}2P_Z^m(m_0))\mathbb{E}[\|\nabla F(x_t)\|]{+}\frac{L\gamma^2}{2}\hspace{-1.5cm}\\
        &+4L\gamma^2 P_Z^m(m_0)(m-1+m_0)\\
    &+2L\gamma^2 (m-1)(1+C_{\alpha_1}+2C_{\alpha_2}(C_{\alpha_1}+1))\\
    &+2\gamma\frac{\sigma}{\sqrt{(1-\alpha_2)Nb}}+4\gamma C_{\alpha_2}\frac{a}{b}\log{(2(1-\alpha_2)Nd)}\\
    &+4\gamma C_{\alpha_2}\frac{\sigma\sqrt{2\log{((2(1-\alpha_2)Nd)}}}{\sqrt{b}}
    \end{split}
\end{equation}
Noting that $F(x_t)\geq F(x^\star)$ for all $t$, we rearrange the terms, sum from $t=m+m_0$ to $t=T+m+m_0-1$ to get the following result for all $m_0\in {\mathbb N}_0$ for which $P_Z^m(m_0)<1/2$:
\begin{equation}\label{eq:thmnoncvxcheckpt4}
\begin{split}
        \frac{1}{T}\sum_{t=m+m_0}^{T+m+m_0-1}\mathbb{E}&[\|\nabla F(x_t)\|]\leq\\
        &\frac{\mathbb{E}[F(x_{m+m_0})]-F(x^\star)}{\gamma T(1-2P_Z^m(m_0))}+\overline{C}(m_0)\gamma\hspace{-1cm}\\
        &+\frac{2\sigma}{(1-2P_Z^m(m_0))\sqrt{(1-\alpha_2)Nb}}\\
        &+\frac{4C_{\alpha_2}a\log{(2(1-\alpha_2)Nd)}}{(1-2P_Z^m(m_0))b}\\
        &+\frac{4C_{\alpha_2}\sigma\sqrt{2\log{((2(1-\alpha_2)Nd)}}}{(1-2P_Z^m(m_0))\sqrt{b}},\hspace{-1cm}
\end{split}
\end{equation}
where
\begin{equation}
\begin{split}
       &\overline{C}(m_0)=L\Big(0.5+4P_Z^m(m_0)(m-1+m_0)\\
       &+2(m-1)(1+C_{\alpha_1}+2C_{\alpha_2}(C_{\alpha_1}+1))\Big).
\end{split}
\end{equation}
Next, we upper bound $\mathbb{E}[F(x_{m+m_0})]$ using smoothness of $F$:
\begin{align}
\begin{split}
     &F(x_{m+m_0})\leq F(x_1)+\langle\nabla F(x_1),x_{m+m_0}-x_1\rangle\\
     &\hspace{2cm}+\frac{L}{2}\|x_{m+m_0}-x_1\|^2
\end{split}\\
    \begin{split}
        &\leq F(x_1){+}\|\nabla F(x_1)\|\gamma(m{+}m_0{-}1){+}\frac{L}{2}\gamma^2(m{+}m_0{-}1)^2.
    \end{split}
\end{align}
Finally, we set $\gamma=\gamma_0/\sqrt{T}$ and plug the above inequality into \eqref{eq:thmnoncvxcheckpt4} to get the final result:
\begin{equation}
\begin{split}
               &\frac{1}{T}\sum_{t=m+m_0}^{T+m-1+m_0}\mathbb{E}[\|\nabla F(x_t)\|]\\
               &\leq\frac{F(x_1)-F(x^\star)}{\sqrt{T}\gamma_0(1-2P_Z^m(m_0))}+\frac{\overline{C}(m_0)\gamma_0}{\sqrt{T}}\hspace{-1cm}\\
        &+\frac{\|\nabla F(x_1)\|(m-1+m_0)}{T(1-2P_Z^m(m_0))}+\frac{L\gamma_0(m-1+m_0)^2}{2T^{3/2}(1-2P_Z^m(m_0))}\\
        &+\frac{2\sigma}{(1-2P_Z^m(m_0))\sqrt{(1-\alpha_2)Nb}}\\
        &+\frac{4C_{\alpha_2}a\log{(2(1-\alpha_2)Nd)}}{(1-2P_Z^m(m_0))b}\\
        &+\frac{4C_{\alpha_2}\sigma\sqrt{2\log{((2(1-\alpha_2)Nd)}}}{(1-2P_Z^m(m_0))\sqrt{b}},\hspace{-1cm}
\end{split}
\end{equation}

\subsection{Proof of Theorem~\ref{thm:stronglycvxiid}}\label{app:stronglycvxiid}
The proof is identical to that of Theorem~\ref{thm:stronglycvx} until \eqref{eq:thmstrongcvxcheckpt1}. Next, we plug \eqref{eq:errorbound} into \eqref{eq:thmstrongcvxcheckpt1} and take expectation of both sides with respect to $z\sim {\cal D}$, this time noting that $\nabla F_{i,\tau}(x_t)$ and $\nabla F_{i,\tau'}(x_t)$ are iid random variables $\forall t,\tau,\tau'$ such that $\tau\neq\tau'$ in the SA setting. This results in:
\begin{align}
    \begin{split}
        &\underset{z\sim{\cal D}}{\mathbb{E}}[\|x_{t+1}-x^\star\|^2]\leq \underset{z\sim{\cal D}}{\mathbb{E}}[\|x_t-x^\star\|^2]+\gamma^2\\
        &-\frac{2\gamma}{\kappa}(1-Z_t(1+\kappa))\underset{z\sim{\cal D}}{\mathbb{E}}[\|x_t-x^\star\|]\\
        &+\frac{4\gamma^2L(m-1)(1+C_{\alpha_1}+2C_{\alpha_2}(C_{\alpha_1}+1))}{\mu}\\
        &+\frac{4\gamma\sigma}{\mu\sqrt{(1-\alpha_2)N(1-\alpha_1)mb}}\\
        &+\frac{8\gamma C_{\alpha_2}}{\mu}\underset{z\sim{\cal D}}{\mathbb{E}}[\underset{i\in{\cal T}^t}{\max}\|\nabla F_{i,t}(x_t)-\nabla F(x_t)\|_{\infty}]\\
        &{+}\frac{4\gamma C_{\alpha_1}}{\mu(1{-}\alpha_2)N}\hspace{-.1cm}\sum_{i\in{\cal T}^t}\hspace{-.1cm}2\hspace{-.1cm}\underset{z\sim{\cal D}}{\mathbb{E}}[\underset{\tau,\tau'\in{\cal T}^t_i}{\max}\|\nabla F_{i,\tau}(x_{\tau'}){-}\nabla F(x_{\tau'})\|_{\infty}]\hspace{-1cm}\\
        &{+}\frac{8\gamma C_{\alpha_1}C_{\alpha_2}}{\mu}2\hspace{-.1cm}\underset{z\sim{\cal D}}{\mathbb{E}}[\underset{i\in {\cal T}^t,\tau,\tau'\in{\cal T}_i^t}{\max}\|\nabla F_{i,\tau}(x_{\tau'}){-}\nabla F(x_{\tau'})\|_{\infty}].\hspace{-1cm}
    \end{split}
\end{align}
Next, we use Theorem~\ref{thm:expectedmaximum} on the last three terms above, noting that the first maximization is over $2(1-\alpha_2)Nd$ sub-gamma random variables, the second maximization is over $4(1-\alpha_1)md$ sub-gamma random variables, and the last maximization is over $4(1-\alpha_1)m(1-\alpha_2)Nd$ sub-gamma random variables. The rest of the proof is identical to that of Theorem~\ref{thm:stronglycvx}, resulting in:
\begin{equation}
    \begin{split}
        \mathbb{E}&[\|x_{T+m+m_0}-x^\star\|^2]\leq\\
        &\left(\|x_{1}-x^\star\|+\gamma(m+m_0-1)\right)^2(1-c_0(m_0)\gamma)^T\\
        &+\frac{4\sigma}{\mu\sqrt{(1-\alpha_2)N(1-\alpha_1)mb}c_0(m_0)}+\frac{\overline{C}(m_0)\gamma}{c_0(m_0)}\\
        &+\frac{8a C_{\alpha_2}\log{(2(1-\alpha_2)Nd)}}{\mu c_0(m_0) b}\\
        &+\frac{8 C_{\alpha_2}\sigma\sqrt{2\log{(2(1-\alpha_2)Nd)}}}{\mu c_0(m_0)\sqrt{b}}\\
        &+\frac{8a C_{\alpha_1}\log{(4(1-\alpha_1)md)}}{\mu c_0(m_0)b}\\
        &+\frac{8 C_{\alpha_1}\sigma\sqrt{2\log{(4(1-\alpha_1)md)}}}{\mu c_0(m_0)\sqrt{b}}\\
        &+\frac{16aC_{\alpha_1} C_{\alpha_2}\log{(4(1-\alpha_2)N(1-\alpha_1)md)}}{\mu c_0(m_0) b}\\
        &+\frac{16C_{\alpha_1} C_{\alpha_2}\sigma\sqrt{2\log{(4(1-\alpha_2)N(1-\alpha_1)md)}}}{\mu c_0(m_0)\sqrt{b}},
    \end{split}
\end{equation}

\subsection{Proof of Theorem~\ref{thm:noncvxiid}}\label{app:noncvxiid}
The proof is identical to that of Theorem~\ref{thm:noncvx} until \eqref{eq:thmnoncvxcheckpt1}. Next, we plug \eqref{eq:errorbound} into \eqref{eq:thmnoncvxcheckpt1} and take expectation of both sides with respect to $z\sim {\cal D}$, this time noting that $\nabla F_{i,\tau}(x_t)$ and $\nabla F_{i,\tau'}(x_t)$ are iid random variables $\forall t,\tau,\tau'$ such that $\tau\neq\tau'$ in the SA setting:

\begin{align}
   \begin{split}
        &\underset{z\sim{\cal D}}{\mathbb{E}}[F(x_{t+1})-F(x_t)]\\
        &\leq -\gamma(1-2Z_t)\underset{z\sim{\cal D}}{\mathbb{E}}[\|\nabla F(x_t)\|]+\frac{L\gamma^2}{2}\\
    &+2L\gamma^2 (m-1)(1+C_{\alpha_1}+2C_{\alpha_2}(C_{\alpha_1}+1))\\
    &+2\gamma\frac{\sigma}{\sqrt{(1-\alpha_2)N(1-\alpha_1)mb}}\\
    &+4\gamma C_{\alpha_2}\underset{z\sim{\cal D}}{\mathbb{E}}[\underset{i\in{\cal T}^t}{\max}\|\nabla F_{i,t}(x_t)-\nabla F(x_t)\|_{\infty}]\\
    &{+}\frac{2\gamma C_{\alpha_1}}{(1{-}\alpha_2)N}\hspace{-.1cm}\sum_{i\in{\cal T}^t}\hspace{-.1cm}2\hspace{-.1cm}\underset{z\sim{\cal D}}{\mathbb{E}}[\underset{\tau,\tau'\in{\cal T}^t_i}{\max}\|\nabla F_{i,\tau}(x_{\tau'}){-}\nabla F(x_{\tau'})\|_{\infty}]\hspace{-1cm}\\
        &{+}4\gamma C_{\alpha_1}C_{\alpha_2}2\hspace{-.1cm}\underset{z\sim{\cal D}}{\mathbb{E}}[\underset{i\in {\cal T}^t,\tau,\tau'\in{\cal T}_i^t}{\max}\|\nabla F_{i,\tau}(x_{\tau'}){-}\nabla F(x_{\tau'})\|_{\infty}].\hspace{-1cm}
    \end{split}
    \end{align}
Next, we use Theorem~\ref{thm:expectedmaximum} on the last three terms above, noting that the first maximization is over $2(1-\alpha_2)Nd$ sub-gamma random variables, the second maximization is over $4(1-\alpha_1)md$ sub-gamma random variables, and the last maximization is over $4(1-\alpha_1)m(1-\alpha_2)Nd$ sub-gamma random variables. The rest of the proof is identical to that of Theorem~\ref{thm:noncvx}, resulting in:
\begin{equation}
    \begin{split}
               &\frac{1}{T}\sum_{t=m+m_0}^{T+m-1+m_0}\mathbb{E}[\|\nabla F(x_t)\|]\\
               &\leq\frac{F(x_1)-F(x^\star)}{\sqrt{T}\gamma_0(1-2P_Z^m(m_0))}+\frac{\overline{C}(m_0)\gamma_0}{\sqrt{T}}\hspace{-1cm}\\
        &+\frac{\|\nabla F(x_1)\|(m-1+m_0)}{T(1-2P_Z^m(m_0))}+\frac{L\gamma_0(m-1+m_0)^2}{2T^{3/2}(1-2P_Z^m(m_0))}\hspace{-1cm}\\
        &+\frac{2\sigma}{(1-2P_Z^m(m_0))\sqrt{(1-\alpha_2)N(1-\alpha_1)mb}}\\
        &+\frac{4C_{\alpha_2}a\log{(2(1-\alpha_2)Nd)}}{(1-2P_Z^m(m_0))b}\\
        &+\frac{4C_{\alpha_2}\sigma\sqrt{2\log{((2(1-\alpha_2)Nd)}}}{(1-2P_Z^m(m_0))\sqrt{b}}\\
        &+\frac{4C_{\alpha_1}a\log{(4(1-\alpha_1)md)}}{(1-2P_Z^m(m_0))b}\\
        &+\frac{4C_{\alpha_1}\sigma\sqrt{2\log{(4(1-\alpha_1)md)}}}{(1-2P_Z^m(m_0))\sqrt{b}}\\
        &+\frac{8C_{\alpha_1}C_{\alpha_2}a\log{(4(1-\alpha_2)N(1-\alpha_1)md)}}{(1-2P_Z^m(m_0))b}\\
        &+\frac{8C_{\alpha_1}C_{\alpha_2}\sigma\sqrt{2\log{(4(1-\alpha_2)N(1-\alpha_1)md)}}}{(1-2P_Z^m(m_0))\sqrt{b}}
\end{split}
\end{equation}

\subsection{Tighter Bound on $P_Z(m_0)$}\label{app:tighterbound}

\begin{lemma}\label{lem:markovianprobbound}
Given the network and algorithm parameters $(m,N,\alpha_1,\alpha_2,M)$, the following holds for all $m_0\in{\mathbb{Z}}^+$:
\begin{equation}\label{eq:PZ}
P_Z(m_0)\leq\sum_{k=\alpha_2 N+1}^N \binom{N}{k}(P_Y^m(m_0))^k(1-P_Y^m(m_0))^{(1-k)},
\end{equation}
where
\begin{equation}
        P_Y^m(m_0)= \sum_{s=0}^1\sum_{k=\alpha_1m+1}^m r_s(k;m,1-p_t,p_b)\Pi_{m_0}(s)
\end{equation}
with
\begin{equation}
    \begin{split}
        &r_s(k;m,1-p_t,p_b)\\
        &=\hspace{-.5cm}\sum_{i=0}^{\min(k,n-k)}\binom{n-i}{k}\binom{k}{i}(1-p_t)^{k-i}(1-p_b)^{n-k-i}(1-p_t-p_b)^i\\
        &+\hspace{-.5cm}\sum_{i=0}^{\min(k-1+s,n-k-s)}\binom{n-i-1}{k-1+s}\binom{k-1+s}{i}(1-p_t)^{n-k-i-1+s}\\
        &\hspace{3cm}\times(1-p_b)^{n-k-i-s}(1-p_t-p_b)^{i+1}
    \end{split}
\end{equation}
and 
\begin{align}
    &\Pi_{m_0}(0)=\frac{p_t-p_t(1-p_b-p_t)^{m_0}}{p_b+p_t}\\
    &\Pi_{m_0}(1)=\frac{p_b+p_t(1-p_b-p_t)^{m_0}}{p_b+p_t}
\end{align}
\end{lemma}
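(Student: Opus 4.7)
My plan is to bound $\mathbb{P}(Z_t=1|{\cal S}_{t-m+1-m_0})$ by passing through two layers: first reduce the event $Z_t=1$ to a sum of independent Bernoullis $\{Y_{i,t}\}_{i\in[N]}$, then for each Bernoulli, reduce $Y_{i,t}=1$ to a count on a single agent's two-state Markov chain. The independence of the agents (each evolves its own Markov chain) means that given ${\cal S}_{t-m+1-m_0}$, the indicators $\{Y_{i,t}\}_{i\in[N]}$ are mutually independent. Letting $P_Y^m(m_0)\eqdef\max_{i,t}\mathbb{P}(Y_{i,t}=1|{\cal S}_{t-m+1-m_0})$, by stochastic domination of a Poisson-binomial by the binomial with the maximum parameter, I obtain
\begin{equation*}
\mathbb{P}(Z_t=1|{\cal S}_{t-m+1-m_0})\leq \sum_{k=\alpha_2 N+1}^N \binom{N}{k}(P_Y^m(m_0))^k(1-P_Y^m(m_0))^{N-k},
\end{equation*}
which is \eqref{eq:PZ}. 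Unlike Lemma~\ref{lem:practicalbound}, which uses a Chernoff-type bound on $P_Y^m(m_0)$ for convenience, here I will compute $P_Y^m(m_0)$ exactly (up to a worst-case choice of the initial distribution at $t-m+1-m_0$).

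For the second layer, I condition on the state $X^i_{t-m+1}$ of agent $i$ at the start of the averaging window:
\begin{equation*}
\mathbb{P}(Y_{i,t}=1|{\cal S}_{t-m+1-m_0})=\sum_{s\in\{0,1\}}\mathbb{P}(Y_{i,t}=1|X^i_{t-m+1}=s)\,\mathbb{P}(X^i_{t-m+1}=s|{\cal S}_{t-m+1-m_0}).
\end{equation*}
Expanding $M^{m_0}$ along its eigendecomposition (with eigenvalues $1$ and $1-p_b-p_t$), the distribution at time $t-m+1$ given the initial distribution $\pi^i_{t-m+1-m_0}$ takes an explicit closed form. Maximizing over this initial distribution is a linear program over a segment, and the maximum of the relevant objective is attained at the vertex $\pi^i_{t-m+1-m_0}(1)=1$ (the agent is Byzantine at the earlier checkpoint), yielding exactly $\Pi_{m_0}(s)$ as written, which I verify directly gives $\Pi_{m_0}(0)=p_{m_0}^{1\to 0}$ and $\Pi_{m_0}(1)=p_{m_0}^{1\to 1}$ and collapses to the stationary distribution as $m_0\to\infty$.

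It remains to express $\mathbb{P}(Y_{i,t}=1|X^i_{t-m+1}=s)=\sum_{k=\alpha_1 m+1}^m r_s(k;m,1-p_t,p_b)$, where $r_s(k;\cdot)$ is the probability that the two-state Markov chain, started in state $s$, visits state~$1$ exactly $k$ times over the $m$-step window. This is the classical counting problem of paths in a two-state chain with a prescribed visit count to each state: any such path is determined by its run structure, i.e.~the alternating sequence of maximal blocks of $0$s and $1$s, and the probability of any path depends only on (i)~the counts of each type of transition $0\!\to\!0$, $0\!\to\!1$, $1\!\to\!0$, $1\!\to\!1$, and (ii)~the initial state. I would index paths by the number $i+1$ of transitions out of $0$-runs (or $1$-runs, depending on the starting state), use a standard stars-and-bars count $\binom{n-i}{k}\binom{k}{i}$ for distributing the remaining positions among the runs, and separate the two summands in $r_s$ according to whether the path ends in state $0$ or state $1$ (which controls whether the last block contributes a "staying" factor or a "transitioning" factor); this yields the two summands in the stated formula, with the $s$-dependence reflecting whether the first block agrees with the starting state.

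The main obstacle is the combinatorial bookkeeping in the derivation of $r_s$: one has to match up, for every starting state $s$ and every terminal state, the exact combinatorial multiplicity of paths having $i+1$ alternations and $k$ ones, and verify that the resulting probability monomial carries exactly the exponents of $1-p_t$, $1-p_b$, and $1-p_t-p_b$ claimed in the lemma. Once this enumeration is correct and exhaustive for each of the four (start,end) pairs, summing over the terminal state gives $r_s$, summing over $k>\alpha_1 m$ gives $\mathbb{P}(Y_{i,t}=1|X^i_{t-m+1}=s)$, and averaging against $\Pi_{m_0}(s)$ and then applying the binomial bound yields \eqref{eq:PZ}.
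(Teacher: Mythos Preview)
Your proposal is correct and follows essentially the same route as the paper: reduce $Z_t$ to a binomial tail via independence across agents and the uniform bound $P_Y^m(m_0)$, then condition on the chain's state at the start of the averaging window, propagate the $m_0$-step transition explicitly, and take the worst-case initial distribution $\pi^i_{t-m+1-m_0}(1)=1$. The only substantive differences are that the paper obtains the closed form for $r_s(k;m,1-p_t,p_b)$ by citing Helgert (1970) rather than rederiving it combinatorially, and that to justify that the maximizing vertex of your linear program is indeed $\pi(1)=1$ (rather than $\pi(0)=1$), the paper explicitly invokes the monotonicity $r_1(\cdot)>r_0(\cdot)$, which you assert but should state.
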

\begin{proof}
The goal here is to find a tighter upper bound on $P_Z(m_0)$ than the bound provided in Lemma~\ref{lem:practicalbound}. The proof is identical to that of Lemma~\ref{lem:practicalbound} until \eqref{eq:PY}. A tighter bound is established by deriving the exact expression on $P_Y^m(m_0)$ rather than using a Chernoff's bound. We continue from \eqref{eq:PY}:
\begin{align}
      \nonumber P_Y^m(m_0){=}\underset{i\in[N]}{\max}\sum_{s=0}^1&\mathbb{P}\left( \frac{1}{m}\sum_{\tau=t-m+1}^t
       \hspace{-.4cm}W_{i,\tau}{>}\alpha_1|W_{i,t-m+1}=s\right)\\
       &\times\mathbb{P}_{\pi^i_{t-m+1-m_0}}(W_{i,t-m+1}{=}s),
\end{align}
where $\mathbb{P}_{\pi^i_{t-m+1-m_0}}(W_{i,t-m+1}=s)$ is the probability that $W_{i,t-m+1}=s$ given the distribution at time $t-m+1-m_0$. The first multiplicand in the above equation has a closed form as follows \cite{helgert1970sums}:
\begin{align}
    &\nonumber\mathbb{P}\left( \frac{1}{m}\sum_{\tau=t-m+1}^tW_{i,\tau}>\alpha_1|W_{i,t-m+1}=s\right)\\
    &=\sum_{k=\alpha_1m+1}^m\mathbb{P}\left( \sum_{\tau=t-m+1}^tW_{i,\tau}=k|W_{i,t-m+1}=s\right)\\
    &=\sum_{k=\alpha_1m+1}^m r_s(k;m,1-p_t,p_b),
\end{align}
where
\begin{equation}
    \begin{split}
        &r_s(k;m,1-p_t,p_b)\\
        &=\hspace{-.5cm}\sum_{i=0}^{\min(k,m-k)}\binom{m{-}i}{k}\binom{k}{i}(1{-}p_t)^{k-i}(1{-}p_b)^{m{-}k{-}i}(1{-}p_t{-}p_b)^i\\
        &+\hspace{-1cm}\sum_{i=0}^{\min(k-1+s,m-k-s)}\binom{m{-}i{-}1}{k{-}1{+}s}\binom{k{-}1{+}s}{i}(1{-}p_t)^{m{-}k{-}i{-}1{+}s}\\
        &\hspace{2cm}\times(1-p_b)^{m-k-i-s}(1-p_t-p_b)^{i+1}.
    \end{split}
\end{equation}
Next, we determine $\mathbb{P}_{\pi^i_{t-m+1-m_0}}(W_{i,t-m+1}=s)$ for $s=\{0,1\}$ as follows:
\begin{align}
        \nonumber&\mathbb{P}_{\pi_{t-m+1-m_0}^i}(W_{i,t-m+1}=0)=\frac{p_t}{p_b+p_t}\\
        \label{eq:initial0}&+\frac{(1{-}p_b{-}p_t)^{m_0}}{p_b+p_t}(\pi^i_{t{-}m{+}1{-}m_0}(0)p_b{-}\pi_{t{-}m{+}1{-}m_0}^i(1)p_t)\\
    \label{eq:initial1}&\mathbb{P}_{\pi^i_{t-m+1-m_0}}(W_{i,t-m+1}{=}1)=1{-}P_{\pi_{t-m+1-m_0}^i}(W_{i,t-m+1}{=}0)
\end{align}
The above probabilities depend on the distribution at time $t-m+1-m_0$. Noting that $\sum_{k=1}^mr_1(k;\cdot)>\sum_{k=1}^mr_0(k;\cdot)$ (as shown in Lemma 1), we upper bound \eqref{eq:PY} by upper bounding $\mathbb{P}_{\pi_{t-m+1-m_0}^i}(W_{i,t-m+1}=1)$. To do so, we lower bound \eqref{eq:initial0} by setting $\pi_{t-m+1-m_0}(1)=1$, i.e., by assuming that at time $t-m+1-m_0$ the agent was Byzantine, which is the worst-case intuition. All in all we have:
\begin{equation}
    P_Y^m(m_0)= \sum_{s=0}^1\sum_{k=\alpha_1m+1}^m r_s(k;m,1-p_t,p_b)\Pi_{m_0}(s),
\end{equation}
where
\begin{align}
    &\Pi_{m_0}(0)=\frac{p_t-p_t(1-p_b-p_t)^{m_0}}{p_b+p_t}\\
    &\Pi_{m_0}(1)=\frac{p_b+p_t(1-p_b-p_t)^{m_0}}{p_b+p_t}
\end{align}
\end{proof}

\end{document}